\newcommand{\R}{\mathbb{R}}
\newcommand{\E}{\mathbb{E}}
\newcommand{\EE}{\mathbb{E}}
\newcommand{\1}{\vect{1}}
\newcommand{\s}{\mathbf{s}}
\renewcommand{\AA}{\mathcal{A}}
\newcommand{\BB}{\mathcal{B}}
\newcommand{\CC}{\mathcal{C}}
\newcommand{\RR}{\mathcal{R}}
\newcommand{\eps}{\varepsilon}
\newcommand{\adj}{\top}
\newcommand{\abs}[1]{\left\lvert #1 \right\rvert}
\newcommand{\norm}[1]{\left\lVert#1\right\rVert}
\newcommand{\brap}[1]{\left( #1 \right)}
\newcommand{\bras}[1]{\left[ #1 \right]}
\newcommand{\brac}[1]{\left\{ #1 \right\}}
\DeclareMathOperator{\Cov}{Cov}
\DeclareMathOperator{\Tr}{Tr}
\DeclareMathOperator{\tr}{tr}
\renewcommand{\vec}{\operatorname{vec}}
\newcommand\mynorm[3]{
    \ifthenelse{\equal{#1}{tr}}{\norm{#2}_{\tr,#3}}{
    \ifthenelse{\equal{#1}{Tr}}{\norm{#2}_{\Tr,#3}}{
    \ifthenelse{\equal{#1}{Etr}}{\norm{#2}_{\E\tr,#3}}{
    \ifthenelse{\equal{#1}{ETr}}{\norm{#2}_{\E\Tr,#3}}{
    \ifthenelse{\equal{#1}{tau}}{\norm{#2}_{\tau,#3}}{
    \ifthenelse{\equal{#1}{tautr}}{\norm{#2}_{\tau\otimes\tr,#3}}{
    \ifthenelse{\equal{#1}{tauTr}}{\norm{#2}_{\tau\otimes\Tr,#3}}{
    \norm{#2}_{#1,#3}}}}}}}}
}
\newcommand{\vect}[1]{\boldsymbol{#1}}
\let\emptyset\varnothing
\newcommand{\qlesssim}{\lesssim_q}
\newcommand{\qgtrsim}{\gtrsim_q}
\newtheorem{theorem}{Theorem}[section]
\newtheorem{proposition}[theorem]{Proposition}
\newtheorem{lemma}[theorem]{Lemma}
\newtheorem{remark}[theorem]{Remark}
\newtheorem{example}[theorem]{Example}
\newtheorem{algorithm}[theorem]{Algorithm}
\theoremstyle{definition}
\newtheorem{definition}[theorem]{Definition}
\newcolumntype{L}[1]{>{\raggedright\let\newline\\\arraybackslash\hspace{0pt}}m{#1}}
\newcolumntype{C}[1]{>{\centering\let\newline\\\arraybackslash\hspace{0pt}}m{#1}}
\newcolumntype{R}[1]{>{\raggedleft\let\newline\\\arraybackslash\hspace{0pt}}m{#1}}
\newcommand{\myarrow}[5]{
    \draw[latex-latex, very thick, #5] (\leftaxis, #1*\levelwidth+#3) -- (0.5*\leftaxis+0.5*\rightaxis+#4, #1*\levelwidth+#3) -- (0.5*\leftaxis+0.5*\rightaxis+#4, #2*\levelwidth+#3) -- (\rightaxis, #2*\levelwidth+#3);
}
\newcommand{\legendline}[2]{
    \draw[-, very thick, #2] (\legenaxis-0.75, #1*\levelwidth) -- (\legenaxis, #1*\levelwidth);
}
\newcommand{\mynode}[5]{
    \node[draw, circle, minimum size=16pt, inner sep=0pt, #5] at (#1, #2) (#3) {#4};
}
\newcommand{\myedge}[2]{
    \draw[-,thick] (#1) -- (#2);
}
\newcommand{\mygmbound}[6]{
    \node[] at (#1, #2) {\footnotesize
    $\abs{V(#3)} = #4, \abs{{\color{Purple!80}\smin}} = #5, \abs{{\color{Green!80}\wiso}} = #6$
    };
}
\newcommand{\myrectangle}[5]{
    \draw[-] (#1-#2,#3-#4) -- (#1-#2,#3+#4) -- (#1+#2,#3+#4) -- (#1+#2,#3-#4) -- (#1-#2,#3-#4);
    \node[] at (#1,#3+#4+0.3) {#5};
}
\newcommand{\smin}{S_{\mathrm{min}}}
\newcommand{\wiso}{W_{\mathrm{iso}}}
\newcommand{\flatta}[2]{\AA_{\bras{\,#1\,\mid\,#2\,}}}
\newcommand{\interflatta}[3]{\AA_{\bras{\,#1\,\mid\,#2\,\mid\,#3\,}}}
\newcommand{\interflatty}[3]{Y_{\bras{\,#1\,\mid\,#2\,\mid\,#3\,}}}
\newcommand{\flattb}[2]{\BB_{\bras{\,#1\,\mid\,#2\,}}}
\newcommand{\specialcolor}{NavyBlue}
\newcommand{\qwithcolor}{{\color{\specialcolor} q}}
\newcommand{\onewithcolor}{{\color{\specialcolor} 1}}
\newcommand{\myceco}{\cellcolor{Green!8}}
\newcommand{\myR}{{\fontfamily{qcr}\selectfont R}}
\newcommand{\myC}{{\fontfamily{qcr}\selectfont C}}
\newcommand{\myRC}{{\fontfamily{qcr}\tiny\color{gray}\selectfont RC}}
\newcommand{\mypurple}{Purple!30}
\newcommand{\mygreen}{Green!30}
\title{Matrix Chaos Inequalities and Chaos of Combinatorial Type}
\author[Bandeira]{Afonso S.\ Bandeira}
\address{Department of Mathematics, ETH Z\"urich, Switzerland}
\email{bandeira@math.ethz.ch}
\author[Lucca]{Kevin Lucca}
\address{Department of Mathematics, ETH Z\"urich, Switzerland}
\email{kevin.lucca@ifor.math.ethz.ch}
\author[Nizi\'{c}-Nikolac]{Petar Nizi\'{c}-Nikolac}
\address{Department of Mathematics, ETH Z\"urich, Switzerland}
\email{petar.nizic-nikolac@ifor.math.ethz.ch}
\author[Van Handel]{Ramon van Handel}
\address{PACM, Princeton University, Princeton, NJ 08544, USA}
\email{rvan@math.princeton.edu}
\date{\today}
\begin{document}

\begin{abstract}
    Matrix concentration inequalities and their recently discovered sharp
    counterparts provide powerful tools to bound the spectrum of random
    matrices whose entries are linear functions of independent random
    variables. However, in many applications in theoretical computer science
    and in other areas one encounters more general random matrix models,
    called matrix chaoses, whose entries are polynomials of independent random
    variables. Such models have often been studied on a case-by-case basis
    using ad-hoc methods that can yield suboptimal dimensional factors.

    In this paper we provide general matrix concentration inequalities for
    matrix chaoses, which enable the treatment of such models in a systematic
    manner. These inequalities are expressed in terms of flattenings of the
    coefficients of the matrix chaos. We further identify a special family of
    matrix chaoses of combinatorial type for which the flattening parameters
    can be computed mechanically by a simple rule. This allows us to provide a
    unified treatment of and improved bounds for matrix chaoses that arise in
    a variety of applications, including graph matrices, Khatri-Rao matrices,
    and matrices that arise in average case analysis of the sum-of-squares
    hierarchy.
\end{abstract}

\maketitle

\tableofcontents

\section{Introduction}\label{sec:intro}

Classical random matrix theory is largely concerned with special models, such as matrices with i.i.d.\ entries, whose spectral properties are understood asymptotically with stunning precision. However, random matrices that appear in applications in theoretical computer science and in other fields often fall outside the scope of the classical models; moreover, these applications typically require an understanding of such models in the nonasymptotic regime.

One of the key advances from this perspective has been the development of a large family of \emph{matrix concentration inequalities} that are widely used in applications. These inequalities can be applied to random matrices whose entries are very general \emph{linear} functions of independent random variables. A prototypical example of such a model is any random matrix with centered jointly gaussian entries (with arbitrary covariance), which can always be represented as
\begin{equation*}\label{eq:gaussianmatrix}
    X = \sum_{i\in[m]} g_{i} A_{i}
\end{equation*}
where $g_1,\dots,g_m$ are i.i.d. standard gaussians and $A_i$ are deterministic matrix coefficients.
In this setting, the noncommutative Khintchine (NCK) inequality of Lust-Piquard and Pisier
\cite[\S 9.8]{Pisier2003IntroductionTO} provides explicitly computable upper and lower bounds on the spectral norm $\|X\|$ that differ only by a logarithmic dimensional factor. This inequality has been extended to non-gaussian models that can be expressed as sums of independent random matrices \cite{Tropp2015AnIT}. These results are used in numerous applications, including average case analysis of spectral methods, algorithms in the sum-of-squares hierarchy, and randomized linear algebra.

Matrix concentration inequalities are usually not sharp and often introduce mild but spurious dimensional factors in the analysis. In recent years, new kinds of inequalities have been developed that are applicable to the same models, but eliminate these dimensional factors and give rise to sharp bounds in many applications \cite{bandeira2023matrix,Brailovskaya2022UniversalityAS,BandeiraEtAl24mc2}. This is achieved by introducing additional parameters that quantify the degree to which random matrices behave like idealized models from free probability theory. We will refer to these inequalities as \emph{strong matrix concentration inequalities}, to distinguish them from their classical counterparts.

While matrix concentration inequalities are extremely versatile, there are large classes of models that cannot be readily understood with these tools. One such class, which we call \emph{matrix chaos}, are matrices whose entries are \emph{polynomials} of independent random variables. For example, in the gaussian case, we will consider random matrices $X$ whose entries are homogeneous square-free polynomials 
of independent gaussian variables $g_1,\dots,g_m$:
\begin{equation*}\label{eq:gaussianchaos} 
    X = \sum_{\substack{i_1,\ldots,i_q\in[m] \\ i_1,\ldots,i_q \text{ distinct}}} g_{i_1}\cdots g_{i_q} A_{i_1,\dots,i_q}.
\end{equation*}
Here $q$ is the order of the chaos and $A_{i_1,\dots,i_q}$ are determinstic matrix coefficients. Such models and their non-gaussian counterparts appear in many  applications; a prominent example are the graph matrices of Potechin et al.~\cite{Meka2015SumofsquaresLB,ahn2016graph}.

The aim of this paper is to develop general \emph{matrix chaos inequalities} that enable the treatment of matrix chaos models in a systematic manner, that are easily applicable in concrete situations, and that give rise to sharp bounds in a variety of applications.

\medskip

\textbf{Contributions and prior work.}
It was understood long ago in operator theory that when linear inequalities of NCK type are available, these can be iterated by means of a systematic procedure to obtain chaos inequalities; see \cite{Haagerup1993BoundedLO} and \cite[Remark 9.8.9]{Pisier2003IntroductionTO}. However, the resulting inequalities were not fully spelled out, and their significance to applications does not appear to have been realized.
Consequently, many (special cases of) inequalities of this kind were repeatedly rediscovered in applied mathematics; see, for example, \cite[Theorem~4.3]{Rauhut2009CirculantAT},
\cite[Theorem~6.8]{Ma2016PolynomialTimeTD}, \cite{Minsker2018MomentIF}, 
\cite[\S 4.4]{Deng2020RandomTP}, \cite[Theorem~6.7]{rajendran2023concentration}, 
\cite{Fan2024KroneckerproductRM}, and \cite{tulsiani2024decoupling}.\footnote{Among these references, the closest in spirit to the approach developed here is the recent work \cite{tulsiani2024decoupling}, which appeared on arxiv after the present paper was submitted for publication.} Furthermore, special matrix chaos models, such as graph matrices \cite{Meka2015SumofsquaresLB,ahn2016graph}, have often been investigated using ad-hoc methods without the benefit of generally applicable tools.

In this paper, we revisit the original operator-theoretic approach for deriving matrix chaos inequalities from their linear counterparts. Besides drawing attention to this simple and natural method, it enables us to achieve a significantly improved toolbox for the study of matrix chaoses that arise in applications. The main contributions of this paper are twofold:

\medskip

\noindent (i)
We will show in section \ref{sec:iteratingMi} that the operator-theoretic approach can be adapted to apply not only to NCK-type inequalities, but also to the recent theory of strong matrix concentration inequalities. This gives rise to \emph{strong matrix chaos inequalities} that yield bounds without  spurious dimensional factors in various settings. By using the linear inequalities as a black box, these inequalities leverage sophisticated tools of random matrix theory and free probability to obtain general bounds that would be difficult to achieve using ad-hoc methods.

\medskip

\noindent(ii)
The basic construction that underpins the operator-theoretic approach will naturally lead us in section~\ref{sec:3} to the consideration of a special class of models that we call \emph{chaos of combinatorial type}, for which all the parameters that appear in our bounds can be computed explicitly by a simple rule. Many matrix chaoses that we have encountered in theoretical computer science applications turn out to be special cases of this class. When that is the case, our methods reduce the study of such models to a nearly trivial computation that often yields improved bounds.

\medskip

It is worth emphasizing that our inequalities provide both upper and lower bounds on the spectral norm of matrix chaoses, which suffice to show in most cases that our bounds are optimal either up to a universal constant or a logarithmic dimensional factor.

In section \ref{sec:applications}, we will illustrate our main results in the context of two notable examples of chaos of combinatorial type: \emph{graph matrices}~\cite{Medarametla2016BoundsOT,ahn2016graph}, which are ubiquitous in the average case analysis of sum-of-squares algorithms~\cite{Meka2015SumofsquaresLB,Barak2016ANT,Potechin2020MachineryFP}; and
\emph{Khatri-Rao matrices}~\cite{KhatriRao1968SoTS}, which have been used in the context of differential privacy~\cite{Kasiviswanathan2010ThePO,De2011LowerBI}.
Beside providing
a unified analysis of the spectrum of these matrices, our techniques yield, in several applications, inequalities without spurious dimensional factors. We will illustrate the latter in the context of the ellipsoid fitting problem (recovering, with a simplified argument, the sharper analysis of graph matrices in~\cite{Hsieh2023EllipsoidFU}); and in a matrix chaos arising in the analysis of a sum-of-squares algorithm for tensor PCA (resulting in a correct-up-to-constants algorithmic guarantee).

In this paper, we focus for simplicity on bounding the spectral norm of homogeneous and square-free matrix chaoses. Our results can be extended to treat more general models, as well as more general spectral statistics such as the smallest singular value.
We defer such extensions and other applications of our techniques to a longer companion manuscript~\cite{IteratedNCK-Journal}.

\medskip

\textbf{Notation.} The following notations will be used throughout this paper.

We write $x \qlesssim y$ if $x \leq C_q y$ for a universal constant $C_q$ that depends only on $q$. When $x \qlesssim y$ and $y \qlesssim x$, we write $x\asymp_q y$. We use $x \lesssim y$ and $x \asymp y$ when the constants are universal.
We denote by $a:b = \{a,a+1,\ldots,b\}$, by $[n]=1:n$, and by $|I|$ the cardinality
of a finite set $I$. We define $x \lor y \coloneqq \max \{x, y\}$.

We will always work with real matrices for simplicity (the results of this paper extend readily to complex matrices). The entries of a matrix $M$ will be denoted $M[i,j]$ or $M_{i,j}$, its adjoint is denoted $M^\adj$, and its operator norm is denoted $\|M\|$.
For a scalar random variable $h$, we denote by $\|h\|_{\psi_2}$ its subgaussian constant and by $\|h\|_{L^p}=(\E|h|^p)^{1/p}$ its $L^p$-norm.
\section{Matrix chaos inequalities}\label{sec:iteratingMi}

The aim of this section is to formulate the main inequalities of this paper.
In section \ref{subsec:decoupling}, we first introduce the general matrix chaos model and its decoupled version. In section \ref{sec:flattenings}, we introduce the basic notation for tensor flattenings that will be used throughout this paper. The main inequalities are stated in section \ref{sec:mainineq}. Finally, we will outline the basic approach to the proofs in section \ref{subsec:iterapproach}. Most of the proof details will be deferred to Appendix \ref{section:proofs}.

\subsection{Matrix chaos and decoupling}\label{subsec:decoupling}

The basic model of this paper is a \emph{matrix chaos}
\begin{equation}\label{eq:chaos}
    X = \sum_{\substack{i_1,\ldots,i_q\in[m] \\ i_1,\ldots,i_q \text{ distinct}}} h_{i_1}\cdots h_{i_q} A_{i_1,\dots,i_q}
\end{equation}
of order $q$.
Here $h_1,\ldots,h_m$ are i.i.d.\ copies of a random variable $h$ with zero mean, and
$A_{i_1,\ldots,i_q}$ are deterministic $d_1\times d_2$ matrix coefficients (we will write $d=d_1\vee d_2$). 

We will often consider a decoupled variant of the above model. To this end, let $\vect{h}^{(1)}, \ldots, \vect{h}^{(q)}$ denote i.i.d.\ copies of $\vect{h} \coloneqq \brap{h_1, \ldots, h_m}$. We define the \emph{decoupled matrix chaos} as
\begin{equation}\label{eq:decnongaussianchaos}
    Y = \sum_{i_1,\ldots,i_q\in[m]} h_{i_1}^{(1)}\cdots h_{i_q}^{(q)} A_{i_1,\dots,i_q}.
\end{equation}
Note that in the decoupled case, the coordinates $i_1,\ldots,i_q$ need not be distinct.

The connection between coupled and decoupled chaoses is captured by classical
decoupling inequalities. In the present setting, \cite[Theorem 3.1.1]{Gin1998DecouplingFD} yields the following.

\begin{theorem}[Decoupling inequalities]\label{thm:decoupling}
    Let $X$ be any matrix chaos as in~\eqref{eq:chaos}, and let $Y$ be the
    decoupled matrix chaos as in~\eqref{eq:decnongaussianchaos} defined by the same random variables $h_1,\ldots,h_m$ and matrix coefficients $A_{i_1,\ldots,i_q}$ (where we set $A_{i_1,\ldots,i_q}=0$ when $i_1,\ldots,i_q$ are not distinct). Then we have
    \begin{equation*}
        \E\norm{X} \qlesssim \E\norm{Y}.
    \end{equation*}
    Moreover, this inequality can be reversed
    \begin{equation*}
        \E\norm{Y} \qlesssim \E\norm{X}
    \end{equation*}
    provided that the matrix coefficients are assumed to be symmetric in the sense that
    $A_{i_1, \ldots, i_q} = A_{i_{\pi(1)}, \ldots, i_{\pi(q)}}$ for every permutation $\pi$
    of $[q]$.
\end{theorem}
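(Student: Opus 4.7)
The plan is to specialize the classical decoupling inequalities of de la Peña and Montgomery-Smith (see \cite[Theorem 3.1.1]{Gin1998DecouplingFD}) to the matrix chaos setting, with the Banach space being the space of matrices under the operator norm. The two directions are qualitatively different in difficulty, and in both the operator norm enters only through its convexity.

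For the forward direction $\E\|X\|\qlesssim\E\|Y\|$, I would use the standard selector (randomization) trick. Introduce i.i.d.\ random variables $\eta_1,\ldots,\eta_m$ uniform on $[q]$, independent of everything else, and form the restricted coupled chaos
\begin{equation*}
X^{\ast} = \sum_{\substack{i_1,\ldots,i_q\in[m]\\i_1,\ldots,i_q\text{ distinct}}} h_{i_1}\cdots h_{i_q}\,A_{i_1,\ldots,i_q}\prod_{k=1}^{q}\indi{\eta_{i_k}=k}.
\end{equation*}
A fixed distinct tuple $(i_1,\ldots,i_q)$ passes the selector constraint with probability exactly $q^{-q}$, so $\E_\eta X^{\ast}=q^{-q}X$; Jensen's inequality applied to the convex norm then gives $\E\|X\|\le q^q\,\E\|X^{\ast}\|$. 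Condition on $\eta$ and let $S_k=\{i:\eta_i=k\}$. The families $\{h_i\}_{i\in S_k}$ for different $k$ are indexed over disjoint subsets of $[m]$ and are therefore independent, so $X^{\ast}\mid\eta$ has the same distribution as the analogous restricted decoupled chaos $Y^{\ast}$ in which each $h_i^{(k)}$ appears multiplied by the $\{0,1\}$-valued factor $\indi{\eta_i=k}$. It then remains to show $\E\|Y^{\ast}\|\qlesssim\E\|Y\|$: introducing independent Rademachers $\epsilon_i^{(k)}$ to symmetrize each mean-zero family $(h_i^{(k)})_i$, the identity $\E[Y\mid\sigma(\epsilon_i^{(k)}:\eta_i=k)]=Y^{\ast}$ together with Jensen closes the argument.

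For the reverse direction, where the symmetry hypothesis on $A$ is essential, the plan is the conjugate construction. Enlarge the probability space to contain both the $qm$ i.i.d.\ copies $h_i^{(k)}$ and a uniformly random allocation $\sigma:[m]\to[q]$. The symmetry of the coefficient tensor $A$ allows one to relabel indices according to $\sigma$ without changing the coefficient; taking a conditional expectation that implements this relabeling produces, on one side, a constant multiple of $Y$, and on the other, a version of $X$ built from the $h_i^{(\sigma(i))}$ that has the same law as $X$. Jensen's inequality on the convex norm then delivers $\E\|Y\|\qlesssim\E\|X\|$.

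The main obstacle is the reverse direction: the symmetry hypothesis cannot be dispensed with (without it one may take an antisymmetric $A$ for which the coupled chaos $X$ vanishes identically by cancellation while $\E\|Y\|>0$), and the bookkeeping around the random allocation $\sigma$, together with the extraction of the correct $q$-dependent constant, is the subtle part of the argument. In contrast, the forward direction reduces via Jensen to a contraction inequality and would extend without further work to any convex increasing function of $\|X\|$.
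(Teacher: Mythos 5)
The paper itself offers no proof of this theorem: it simply invokes the classical decoupling theorem of de la Pe\~na and Montgomery-Smith as stated in \cite[Theorem 3.1.1]{Gin1998DecouplingFD}, observing that it applies verbatim with the Banach space taken to be the space of $d_1\times d_2$ matrices under the operator norm. What you are doing is reconstructing a proof of that cited black box, which is a legitimate but different exercise, so the relevant question is whether your reconstruction is sound.

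Your forward direction is essentially the standard selector argument and is correct in outline. Two small remarks. First, the identity $\E_\eta X^\ast = q^{-q}X$ and the Jensen step are fine, but the final contraction step does not need Rademachers: once you condition on $\eta$ and identify $X^\ast$ in law with the restricted decoupled chaos $Y^\ast=\sum_{i_1\in S_1,\ldots,i_q\in S_q}h^{(1)}_{i_1}\cdots h^{(q)}_{i_q}A_{i_1,\ldots,i_q}$, the cleanest route is to note that $Y^\ast=\E\bigl[Y\,\big|\,\sigma\bigl(h^{(k)}_i:\eta_i=k,\,k\in[q]\bigr)\bigr]$, which holds because each $h^{(k)}_{i_k}$ with $\eta_{i_k}\neq k$ has zero conditional mean, and then apply Jensen for the norm. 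Your variant with symmetrization works too but the stated identity should read with the conditioning $\sigma$-algebra also containing the $h$'s, and with $Y$ replaced by its symmetrized version; as written it is garbled. Second, you should spell out why the $\{h_i\}_{i\in S_k}$ with disjoint $S_k$ may be replaced by independent copies $\{h^{(k)}_i\}_{i\in S_k}$; this is correct but is the crux of the equidistribution step.

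The reverse direction is where your sketch falls short of a proof. Your counterexample to removability of the symmetry hypothesis is the right one (it is precisely the example already given in the paper's Remark following the theorem). But the construction you describe -- a random allocation $\sigma:[m]\to[q]$, relabeling via the symmetry of $A$, and one Jensen step -- does not close; the difficulty is that the conditional expectation you would like to take does not land on a constant multiple of $Y$, because the correlation structure between $h_i^{(\sigma(i))}$ and the allocation $\sigma$ does not factor the way the forward argument does. The classical reverse decoupling of de la Pe\~na and Montgomery-Smith requires a genuinely more delicate device (an inductive argument together with a careful comparison of the conditional chaoses, or equivalently a Paley--Zygmund/polarization-type argument on the group of permutations), and this is exactly why the reverse inequality carries the symmetry hypothesis and a worse constant. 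As it stands, your sketch names the right ingredients but does not exhibit the conditional-expectation identity it is appealing to, so there is a real gap there.
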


The iteration argument that forms the basis for our proofs will rely crucially on the independence structure of the decoupled model. 
As decoupled chaoses arise in applications in their own right, we will formulate our main inequalities for decoupled chaoses \eqref{eq:decnongaussianchaos} and take for granted in the sequel that these inequalities can also be applied for coupled chaoses \eqref{eq:chaos} by virtue of Theorem \ref{thm:decoupling}.

\begin{remark}[Lower bounding $\E\norm{X}$]
    The lower bound in Theorem~\ref{thm:decoupling} has an additional assumption that the matrix
    coefficients are symmetric. This assumption is necessary: consider, for example, the chaos $Y = g_1^{(1)}g_2^{(2)} - g_2^{(1)}g_1^{(2)}$ whose coupled version vanishes $X = 0$. On the other hand, as the matrix coefficients in \eqref{eq:chaos} may clearly be chosen to be symmetric without loss of generality, this does not present any fundamental restriction to obtaining
    lower bounds on $\E\norm{X}$.
\end{remark}

\begin{remark}[More general chaoses]\label{rmk:non_diag_free_chaos}
    In this paper, we work only with homogeneous square-free chaoses \eqref{eq:chaos}.
    However, non-homogeneous or non-square-free matrix chaoses can often be treated using
    similar methods. Such models are addressed in the longer companion manuscript~\cite{IteratedNCK-Journal}.
\end{remark}

\subsection{Flattenings}
\label{sec:flattenings}

Fix a decoupled matrix chaos as in \eqref{eq:decnongaussianchaos}. It will be convenient to view the matrix coefficients $A_{i_1,\ldots,i_q}$ of the chaos as defining a tensor $\AA$ of order $q+2$ by
$$
    \AA_{i_1, \ldots, i_q, i_{q+1}, i_{q+2}} \coloneqq \brap{A_{i_1, \ldots, i_q}}_{i_{q+1}, i_{q+2}}.
$$
Here the first $q$ coordinates (which we call \emph{chaos coordinates}) range from $1$ to $m$ and the last two (which we call \emph{matrix coordinates}) range from $1$ to $d_1$ and $1$ to $d_2$, respectively.

The main inequalities of this paper will be defined in terms of the norms of \emph{flattenings} of the tensor $\AA$ that are defined as follows. Denote by $e_i$ the $i$th element of the standard coordinate basis, viewed as a column vector. Then for any subsets $R,C\subseteq[q+2]$, we define
the matrix
\begin{equation}\label{eq:flattenings}
    \flatta{R}{C} \coloneqq \sum_{\substack{i_1, \ldots, i_q \in [m]\\i_{q+1} \in [d_1], i_{q+2} \in [d_2]}} \brap{\bigotimes_{t \in R} e_{i_t}} \otimes \brap{\bigotimes_{t \in C} e_{i_t}^\top} \AA_{i_1, \ldots, i_{q+2}}.
\end{equation}
This definition is easiest to interpret when $R=[q+2]\backslash C$: in this case, $\flatta{R}{C}$ is the matrix whose rows are indexed by the coordinates in the row set $R$, whose columns are indexed by the coordinates in the column set $C$, and whose entries are the corresponding entries of $\AA$. For example, if $q=2$ and $R=\{1,3\}$, $C=\{2,4\}$, then the associated flattening $\flatta{R}{C}$ is the $md_1\times md_2$ matrix with entries
$(\flatta{R}{C})_{(i_1,i_3),(i_2,i_4)} = \AA_{i_1,i_2,i_3,i_4}$. However, we will also encounter flattenings where the same coordinate may appear simultaneously in $R$ and $C$, which corresponds to diagonalization. For example, if $q=1$ and $R=\{1,2\}$, $C=\{1,3\}$, then $(\flatta{R}{C})_{(i_1,i_2),(i_1',i_3)} = 1_{i_1=i_1'}\AA_{i_1,i_2,i_3}$.

\subsection{The main inequalities}
\label{sec:mainineq}

We now formulate the main inequalities of this paper, which bound the spectral norm of a matrix chaos in terms of the spectral norms of flattenings of the coefficient tensor $\AA$.
All these inequalities will be derived by an iteration argument from an underlying matrix concentration inequality for linear random matrices. The basic idea behind the iteration method will be explained in section \ref{subsec:iterapproach}. We postpone the detailed proofs to Appendix~\ref{section:proofs}. 

Our main inequalities will be stated for decoupled chaoses as in \eqref{eq:decnongaussianchaos}. Their extension to coupled chaoses as in \eqref{eq:chaos}
is immediate by Theorem~\ref{thm:decoupling}. We focus for simplicity on expectation bounds; tail bounds can then be deduced using concentration tools (e.g., \cite{adamczaketal2021} or as in \cite[Lemma 7.6]{pisier2013subexponential}).

\subsubsection{Iterated NCK}

The simplest matrix concentration inequality for linear random matrices is the noncommutative Khintchine inequality (NCK) \cite[\S 9.8]{Pisier2003IntroductionTO}, see Theorem \ref{thm:nck} in the appendix. We begin by formulating the corresponding matrix chaos inequality.

To this end, we now introduce the basic parameter that controls the leading order behavior of matrix chaoses. A flattening is said to be a $\boldsymbol{\sigma}$\textbf{-flattening} if
$R=[q+2]\backslash C$ and if the original matrix coordinates are kept as row and column coordinates, that is, $q+1\in R$ and $q+2\in C$. In this case, $\flatta{R}{C}$ is an $m^{|R|-1}d_1\times m^{|C|-1}d_2$ matrix. We now define 
\begin{equation}\label{eq:sigmadefinition}
    \sigma(\AA) \coloneqq \max_{\substack{R=[q+2]\backslash C\\q+1\in R, q+2\in C}} \norm{\flatta{R}{C}},
\end{equation}
that is, $\sigma(\AA)$ is the largest spectral norm of all $\sigma$-flattenings. We can now formulate the iterated NCK inequality, whose proof will be given in Appendix~\ref{sec:proofIterNCK}.

\begin{theorem}[Iterated NCK]\label{thm:iteratednck}
    Let $Y$ be a decoupled chaos as in \eqref{eq:decnongaussianchaos}. Then
    \begin{equation*}
        \|h\|_{L_1}^q\sigma(\AA) \qlesssim \E\norm{Y} \qlesssim 
        \|h\|_{\psi_2}^q\log(d+m)^{\frac{q}{2}} \sigma(\AA).
    \end{equation*}
    Alternatively, the upper bound remains valid if $\|h\|_{\psi_2}$ is replaced by $\|h\|_{L^{\log m}}$.
\end{theorem}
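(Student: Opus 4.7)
The plan is to derive both bounds by iterating the linear noncommutative Khintchine inequality (NCK) exactly $q$ times, exploiting the decoupled independence structure of $Y$. The key observation is that, conditional on $h^{(1)},\ldots,h^{(q-1)}$, the matrix
\[
    Y = \sum_{i_q \in [m]} h_{i_q}^{(q)} B_{i_q}, \qquad B_{i_q} = \sum_{i_1,\ldots,i_{q-1}\in[m]} h_{i_1}^{(1)}\cdots h_{i_{q-1}}^{(q-1)} A_{i_1,\ldots,i_q},
\]
is a linear random matrix in the outermost layer $h^{(q)}$. Hence NCK applies conditionally, and the ``matrix coefficients'' $B_{i_q}$ are themselves decoupled chaoses of order $q-1$, which sets up the induction.

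For the upper bound I would induct on $q$ (with trivial base $q=0$). Applying NCK to the outer layer yields
\[
    \E_{h^{(q)}}\norm{Y} \lesssim \|h\|_{\psi_2}\sqrt{\log(d+m^{q-1})}\brap{\norm{\sum_{i_q} B_{i_q} B_{i_q}^\top}^{1/2} + \norm{\sum_{i_q} B_{i_q}^\top B_{i_q}}^{1/2}}.
\]
The algebraic identity $\norm{\sum_{i_q} B_{i_q} B_{i_q}^\top}^{1/2} = \norm{[B_1\mid\cdots\mid B_m]}$ reinterprets the right-hand side as the operator norm of a new decoupled order-$(q-1)$ chaos $C$, whose coefficient tensor $\tilde\AA$ is obtained from $\AA$ by absorbing the $i_q$ axis into the matrix column side (and, analogously for the vertical stack, into the row side). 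A direct inspection of~\eqref{eq:sigmadefinition} shows that every $\sigma$-flattening of $\tilde\AA$ is a $\sigma$-flattening of $\AA$, so $\sigma(\tilde\AA)\leq\sigma(\AA)$. Applying the inductive hypothesis to $\E\norm{C}$ and chaining then gives the claim; the accumulated logarithmic factors of the form $\prod_{t=0}^{q-1}\sqrt{\log(d+m^{t+1})}$ are bounded by $C_q\log(d+m)^{q/2}$.

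For the lower bound the same iteration works, but with the matching NCK lower bound
\[
    \E\norm{\sum_i h_i A_i} \gtrsim \|h\|_{L^1}\max\brap{\norm{\sum_i A_i A_i^\top}^{1/2}, \norm{\sum_i A_i^\top A_i}^{1/2}},
\]
which can be deduced from Jensen's inequality ($\E\norm{X}^2 \geq \norm{\E XX^\top}$) combined with a Kahane--Khintchine moment equivalence. To hit a chosen $\sigma$-flattening $\flatta{R}{C}$, at step $t$ of the iteration I select the horizontal or vertical concatenation according to whether $t\in C$ or $t\in R$. Iterating $q$ times yields $\E\norm{Y} \qgtrsim \|h\|_{L^1}^q \norm{\flatta{R}{C}}$, and maximizing over $\sigma$-flattenings delivers $\|h\|_{L^1}^q\sigma(\AA)$.

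The hard part, I expect, is on the lower bound side: establishing the NCK lower bound for arbitrary mean-zero $h$ with only a $\|h\|_{L^1}$ prefactor requires the Kahane--Khintchine passage from $\norm{\E XX^\top}$ to $\E\norm{X}$ to be quantitative and to depend on $h$ only through its $L^1$ norm. A secondary bookkeeping task is checking at each recursion step that the absorbed coordinate lands in the correct matrix-coordinate slot of $\tilde\AA$, so that no non-$\sigma$-flattening is introduced and $\sigma(\tilde\AA)\le\sigma(\AA)$ really holds. The alternative upper bound with $\|h\|_{L^{\log m}}$ follows by the usual device of replacing $\|h\|_{\psi_2}$ in the proof of NCK by $\sqrt{p}\,\|h\|_{L^p}$ and choosing $p=\log m$, an optimization that can yield sharper constants for distributions with lighter-than-subgaussian tails.
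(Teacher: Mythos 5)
Your proposal is, in all structural respects, the same argument the paper uses: condition on $\vect{h}^{(1)},\ldots,\vect{h}^{(q-1)}$ to expose the outermost layer as a linear random matrix, apply the linear NCK conditionally, reinterpret the two quantities $\sigma_R,\sigma_C$ of the conditioned matrix as (random) intermediate flattenings of $\AA$ that are themselves chaoses of order $q-1$, and close by induction. The paper packages the bookkeeping you worry about (``checking at each recursion step that the absorbed coordinate lands in the correct matrix-coordinate slot'') into Lemma~\ref{lem:finalmixedflattenings}, which states precisely that the final $\sigma$-flattenings of an intermediate flattening are $\sigma$-flattenings of the original $\AA$; your description of choosing, at step $t$, a horizontal or vertical concatenation according to whether $t$ should land in $C$ or $R$ is exactly the content of that lemma restricted to the $\sigma$-case.

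Two peripheral points where your plan diverges from (or understates) the paper. First, on the $\|h\|_{L^1}$ lower bound in the base case: reaching ``Jensen plus a Kahane--Khintchine moment equivalence'' directly for arbitrary centered $h$ is not quite enough, because the Khintchine--Kahane constant is not known uniformly over $h$. The paper instead symmetrizes with independent Rademacher signs so that Khintchine--Kahane is invoked only for the Rademacher layer (with universal constants), and then the $\|h\|_{L^1}$ prefactor is extracted by a second Jensen, this time in $|h_1|,\ldots,|h_m|$, using convexity of the resulting expression. You did flag this as the hard part, so no error, but the symmetrization step is the missing ingredient. Second, for the $\|h\|_{L^{\log m}}$ variant you propose to modify the NCK constant to $\sqrt{p}\,\|h\|_{L^p}$ and optimize in $p$; the paper's route is again via symmetrization: condition on the $|h_i|$'s, apply Rademacher NCK, pull out $\E\max_i|h_i|$, and bound it by $\|h\|_{L^{\log m}}$. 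Your moment-growth version would need more justification than stated (the exponent on $p$ and whether such an NCK moment bound holds for general $h$), whereas the paper's device avoids those issues.
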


Note, for example, that $\|h\|_{L^1},\|h\|_{\psi_2}\asymp 1$ if $h$ is a standard Gaussian or Rademacher variable. When this is the case, Theorem \ref{thm:iteratednck} states that the parameter $\sigma(\AA)$ captures the spectral norm of any matrix chaos up to a logarithmic dimensional factor.

\subsubsection{Iterated strong NCK}

The drawback of Theorem \ref{thm:iteratednck} is that the dimensional factor in the upper bound often proves to be suboptimal. For subgaussian random matrices, sharp bounds can often be achieved by using instead the strong NCK inequality of \cite{bandeira2023matrix}, see Theorem~\ref{thm:freenck} in the appendix. We now formulate a corresponding matrix chaos inequality.

A flattening is said to be a $\boldsymbol{v}$\textbf{-flattening} if $R=[q+2]\backslash C$ is nonempty and if the original matrix coordinates are both assigned to be column coordinates, that is, $q+1\in C$ and $q+2\in C$. Define
\begin{equation}\label{eq:vdefinition}
    v(\AA) \coloneqq \max_{\substack{R= [q+2]\backslash C\\q+1, q+2\in C\\R \neq \emptyset}} \norm{\flatta{R}{C}},
\end{equation}
that is, $v(\AA)$ is the largest spectral norm of all $v$-flattenings.
We can now formulate the iterated strong NCK inequality, whose proof will be given in
Appendix~\ref{sec:proofIterStrongNCK}.

\begin{theorem}[Iterated strong NCK]\label{thm:iteratedfreenck}
    Let $Y$ be a decoupled chaos as in \eqref{eq:decnongaussianchaos}. Then
    \begin{equation*}
        \E\norm{Y} \qlesssim 
        \|h\|_{\psi_2}^q \brap{\sigma(\AA) + \log(d+m)^{\frac{q+2}{2}} v(\AA)}.
    \end{equation*}
\end{theorem}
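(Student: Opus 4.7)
The plan is to prove Theorem~\ref{thm:iteratedfreenck} by induction on the order $q$, bootstrapping the strong noncommutative Khintchine inequality (Theorem~\ref{thm:freenck}) as outlined in section~\ref{subsec:iterapproach}. For the base case $q=1$ the chaos $Y = \sum_i h_i A_i$ is a linear random matrix, and strong NCK (in its subgaussian extension) applies directly once one checks that its two strong NCK parameters coincide with $\sigma(\AA)$ and $v(\AA)$: the two $\sigma$-flattenings for $q=1$ correspond to $\norm{\sum_i A_i^\adj A_i}^{1/2}$ and $\norm{\sum_i A_i A_i^\adj}^{1/2}$, and the unique $v$-flattening gives the covariance parameter of $Y$.

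For the inductive step, I would isolate one chaos variable and decompose
\begin{equation*}
    Y = \sum_{i_q \in [m]} h_{i_q}^{(q)} B_{i_q}, \qquad B_{i_q} = \sum_{i_1, \ldots, i_{q-1}} h_{i_1}^{(1)} \cdots h_{i_{q-1}}^{(q-1)} A_{i_1, \ldots, i_q},
\end{equation*}
so that, conditionally on $\FF = \sigma(h^{(1)}, \ldots, h^{(q-1)})$, the chaos $Y$ is linear in the independent variables $h^{(q)}$. Applying strong NCK conditionally yields
\begin{equation*}
    \E[\norm{Y} \mid \FF] \qlesssim \norm{h}_{\psi_2} \brap{\sigma_Y + v_Y \log(d+m)^{3/2}},
\end{equation*}
where $\sigma_Y$ and $v_Y$ are the strong NCK parameters of the conditional coefficients $\brac{B_{i_q}}$. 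The key structural observation is that $\sigma_Y$ and $v_Y$ are themselves spectral norms of decoupled chaoses of order $q-1$ in the remaining variables: the coefficient tensor $\tilde\AA$ for $\sigma_Y$ is obtained by absorbing the $i_q$ coordinate into one of the matrix coordinates of $\AA$ (stacking or concatenating the $B_{i_q}$), while the coefficient tensor $\hat\AA$ for $v_Y$ is obtained by packing both original matrix coordinates of $\AA$ into a single column coordinate of a $d_1 d_2 \times m$ matrix whose $i_q$-column is $\vec(B_{i_q})$.

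Applying the inductive hypothesis to these order-$(q-1)$ chaoses reduces the problem to a purely combinatorial identification of flattenings. One verifies that $\sigma$- and $v$-flattenings of $\tilde\AA$ are, respectively, $\sigma$- and $v$-flattenings of $\AA$, so $\sigma(\tilde\AA) \leq \sigma(\AA)$ and $v(\tilde\AA) \leq v(\AA)$; and crucially that \emph{both} $\sigma$- and $v$-flattenings of $\hat\AA$ correspond to $v$-flattenings of $\AA$ (using that operator norm is transpose-invariant, so one may always move both original matrix coordinates of $\hat\AA$ to the column side), so $\sigma(\hat\AA), v(\hat\AA) \leq v(\AA)$. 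This last fact is precisely what allows the $\sigma(\AA)$ term in the final bound to survive with no logarithmic prefactor.

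The main obstacle is careful bookkeeping of the logarithmic factors. A naive iteration as above would accumulate a factor of $\log^{3q/2}(d+m)$ rather than the desired $\log^{(q+2)/2}(d+m)$. I expect the appendix proof to avoid this loss by iterating an $L^p$-moment form of strong NCK, in which the polylogarithmic factor is replaced by an explicit polynomial dependence on the moment parameter $p$; one then sets $p \asymp \log(d+m)$ only at the end. In this formulation each induction step contributes only an additive $p^{1/2}$ factor, which telescopes to the sharp total exponent $(q+2)/2$.
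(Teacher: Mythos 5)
Your skeleton---induction on $q$, conditioning on $\vect{h}^{(1)},\ldots,\vect{h}^{(q-1)}$, applying strong NCK to the resulting linear chaos, and the identification of the flattenings of the intermediate coefficient tensors (in particular that both $\sigma$- and $v$-flattenings of the $v$-branch tensor $\hat{\AA}=\interflatta{1:q-1}{q}{q+1,q+2}$ are $v$-flattenings of $\AA$)---matches the paper's argument, and you correctly diagnose that naively iterating the strong inequality on all three branches accumulates a factor $\log(d+m)^{\frac{3q}{2}}$. The gap is in how you resolve this. You defer the resolution to a conjectured ``$L^p$-moment form of strong NCK in which each induction step contributes only an additive $p^{1/2}$ factor,'' but no such form is exhibited, and the claim is not plausible as stated: the $\log^{3/2}$ (equivalently $p^{3/2}$) prefactor on the $v$-parameter is intrinsic to the \emph{linear} strong NCK inequality (Theorem~\ref{thm:freenck}, coming from the term $\log(d)^{3/4}\sigma^{1/2}v^{1/2}$ plus Young's inequality), and it does not shrink to $p^{1/2}$ merely by working with Schatten moments instead of the operator norm. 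Iterating the moment form of the strong inequality along the $v$-branch would still cost $p^{3/2}$ per step there.

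The paper's actual fix is different and simpler: only the two $\sigma$-branches are fed back into the strong induction hypothesis, while the $v$-branch is bounded by the \emph{plain} iterated NCK inequality (Theorem~\ref{thm:iteratednck}). That inequality costs only $\log^{\frac{q-1}{2}}$ for the remaining $q-1$ coordinates and requires only $\sigma(\hat{\AA})$, which by Lemma~\ref{lem:finalmixedflattenings} is at most $v(\AA)$. The structural point is the one you already observed: once one has branched in the $v$-direction, every subsequent flattening is a $v$-flattening of $\AA$, so there is no clean $\sigma$-term left to protect in that branch and the cheaper plain NCK suffices. The total cost of the $v$-branch is then $\log^{\frac{3}{2}}\cdot\log^{\frac{q-1}{2}}=\log^{\frac{q+2}{2}}$, matching the stated exponent, while the $\sigma$-branches contribute $\sigma(\AA)+\log^{\frac{q+1}{2}}v(\AA)$ by the induction hypothesis. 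As written, your proof does not contain this step (or a working substitute), so the induction does not close with the claimed logarithmic power.
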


\smallskip

The corresponding lower bound on $\E\norm{Y}$ follows already from Theorem \ref{thm:iteratednck}. The significance of Theorem \ref{thm:iteratedfreenck} is that when $v(\AA)\ll\sigma(\AA)$, the logarithmic factor in Theorem \ref{thm:iteratednck} is eliminated.

\subsubsection{Iterated matrix Rosenthal}

The above results yield matching upper and lower bounds for matrix chaoses that are based on regularly behaved random variables $h_1,...,h_m$, such as Gaussians or Rademachers. However, they may result in poor bounds in situations where $\|h\|_{\psi_2}$ is very large or $\|h\|_{L^1}$ is very small. A typical situation of this kind that arises frequently in practice is in the study of sparse models, where $h$ is a standardized (i.e., normalized to have zero mean and unit variance) $\mathrm{Bern}(p)$ random variable. In this case, it is readily verified that $\|h\|_{\psi_2}\to\infty$ and $\|h\|_{L^1}\to 0$ in the sparse regime $p\to 0$, which causes the previous bounds to diverge.

For linear random matrices, this issue can be surmounted by using inequalities of Rosenthal type \cite{junge2013,Mackey_2014,Brailovskaya2022UniversalityAS},
see Theorem \ref{thm:rosenthal} in the appendix. We now formulate a corresponding matrix chaos inequality. The strong form will be given in the next section.

A flattening is said to be an $\boldsymbol{r}$\textbf{-flattening} if the original matrix coordinates are kept as row and column coordinates, that is, $q+1\in R$ and $q+2\in C$, but there is at least one of the $q$ chaos coordinates that appears both in $R$ and $C$. We now define
\begin{equation}\label{eq:rdefinition}
    r(\AA) \coloneqq \max_{\substack{R \cup C = [q+2]\\q+1\in R, q+2\in C\\\emptyset \neq R \cap C \subseteq [q]}} \norm{\flatta{R}{C}},
\end{equation}
that is, $r(\AA)$ as the largest spectral norm of all $r$-flattenings. We can now formulate an iterated matrix Rosenthal inequality, whose proof is given in Appendix~\ref{sec:proofIteratedRosenthal}.

\begin{theorem}[Iterated matrix Rosenthal]\label{thm:iteratedrose}
    Let $Y$ be a decoupled chaos as in~\eqref{eq:decnongaussianchaos}. Assume that $h$ has unit variance, and define the 
    parameter $\alpha(h) = \|h\|_{L^{\log(d+m)}}$. Then we have
    \begin{equation*}
    \sigma(\AA) - C_q\alpha(h)^q \log(d+m)^{\frac{q}{2}}r(\AA)
        \qlesssim
        \E\norm{Y} \qlesssim \log(d+m)^{\frac{q}{2}} \sigma(\AA)+ \alpha(h)^q \log(d+m)^{\frac{q+1}{2}}r(\AA),
    \end{equation*}
    where $C_q$ is a constant that depends only on $q$.
\end{theorem}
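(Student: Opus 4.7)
Proceed by induction on the order $q$, using the iteration framework outlined in \S\ref{subsec:iterapproach} but starting from the matrix Rosenthal inequality (Theorem \ref{thm:rosenthal}) rather than from NCK. The base case $q=0$ is trivial, since then $Y$ is deterministic and $\|Y\|=\sigma(\AA)$ with no $r$-flattenings available. For the inductive step, condition on $\vect{h}^{(2)},\ldots,\vect{h}^{(q)}$ and decompose
\begin{equation*}
    Y=\sum_{i_1\in[m]} h^{(1)}_{i_1}\, Z_{i_1},\qquad
    Z_{i_1}=\sum_{i_2,\ldots,i_q\in[m]} h^{(2)}_{i_2}\cdots h^{(q)}_{i_q}\, A_{i_1,i_2,\ldots,i_q},
\end{equation*}
so that, conditionally on the $Z_{i_1}$'s, $Y$ is a sum of independent centered random matrices indexed by $i_1$. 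Apply matrix Rosenthal conditionally at exponent $p=\log(d+m)$; the resulting bound controls $\|Y\|_{L^p}$ in terms of the conditional matrix variance $\|\sum_{i_1}Z_{i_1}Z_{i_1}^{\adj}\|^{1/2}$ (and its transposed counterpart) together with a maximum term of the form $\alpha(h)\|\max_{i_1}\|Z_{i_1}\|\|_{L^p}$.

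Handle the variance term by the block-row identity $\|\sum_{i_1}Z_{i_1}Z_{i_1}^{\adj}\|^{1/2}=\|[Z_1,\ldots,Z_m]\|$, which realises it as the operator norm of a decoupled chaos of order $q-1$ whose coefficient tensor $\AA'$ is obtained from $\AA$ by promoting coordinate $1$ from a chaos index to a column matrix index; the inductive hypothesis gives a bound in terms of $\sigma(\AA')$ and $r(\AA')$, and a direct identification shows that both of these realise, respectively, specific $\sigma$- and $r$-flattenings of the original $\AA$. Handle the maximum term via $\|\max_{i_1}\|Z_{i_1}\|\|_{L^p}\leq m^{1/p}\max_{i_1}\|Z_{i_1}\|_{L^p}=O(\max_{i_1}\|Z_{i_1}\|_{L^p})$ (since $m^{1/p}=O(1)$ for $p\geq\log m$), then apply the inductive hypothesis to each fixed-$i_1$ chaos $Z_{i_1}$. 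The key structural observation is that $\max_{i_1}$ equals the operator norm of the block-diagonal matrix indexed by $i_1$, so every $\sigma$- or $r$-flattening of the order-$(q-1)$ tensor underlying $Z_{i_1}$, once maximised over $i_1$, becomes a flattening of $\AA$ in which coordinate $1$ appears simultaneously in $R$ and $C$, i.e.\ an $r$-flattening. Combining the contributions of both steps and tracking the powers of $p=\log(d+m)$ yields the claimed upper bound.

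For the lower bound, apply the Rosenthal-type lower bound $\|\sum_{i_1}Z_{i_1}Z_{i_1}^{\adj}\|^{1/2}\qlesssim \E\|Y\|+(\text{max correction})$ conditionally at each step of the same iteration: after $q$ rounds, the variance side becomes a $\sigma$-flattening of $\AA$, while the accumulated max corrections sum to the additive error $C_q\alpha(h)^q\log(d+m)^{q/2}r(\AA)$ claimed in the theorem. To realise an arbitrary $\sigma$-flattening on the left (rather than only the one obtained by peeling coordinates in the natural order $1,2,\ldots,q$), one runs the iteration in the order dictated by the maximising partition $R,C$. The main obstacle throughout is bookkeeping: one must verify at every stage that the variance step produces only $\sigma$- and $r$-flattenings (never, e.g., $v$-flattenings), and that the maximum step produces only $r$-flattenings. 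The fact that the variance operation sends a chaos coordinate into the column set while preserving the canonical placement $q+1\in R$, $q+2\in C$, and that the max operation diagonalises a chaos coordinate across $R$ and $C$, is precisely what ensures that the iteration closes on the two parameters $\sigma(\AA)$ and $r(\AA)$; carrying out this tracking carefully is the technical heart of the argument.
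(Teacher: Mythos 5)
Your overall scheme --- peel off one chaos coordinate, apply the linear matrix Rosenthal inequality conditionally, and recognize the variance term as a chaos of order $q-1$ whose flattenings are flattenings of $\AA$ --- is the same as the paper's. But there is a genuine gap in your treatment of the maximum term, and it is exactly the point where the claimed exponent $\log(d+m)^{\frac{q+1}{2}}$ is at stake. You propose to bound $\max_{i_1}\|Z_{i_1}\|$ (equivalently, the block-diagonal intermediate flattening $\interflatty{2:q}{\onewithcolor,q+1}{\onewithcolor,q+2}$) by applying the \emph{Rosenthal} induction hypothesis to the order-$(q-1)$ chaoses $Z_{i_1}$. Even granting that every resulting flattening is an $r$-flattening of $\AA$, this gives
\begin{equation*}
\alpha(h)\log(d+m)\cdot\Bigl(\log(d+m)^{\frac{q-1}{2}}r(\AA)+\alpha(h)^{q-1}\log(d+m)^{\frac{q}{2}}r(\AA)\Bigr),
\end{equation*}
whose second summand is $\alpha(h)^{q}\log(d+m)^{\frac{q+2}{2}}r(\AA)$ --- already half a power of the logarithm too large at a single step, and the recursion $b_q=b_{q-1}+1$ then compounds to $\log^{q}$ rather than $\log^{\frac{q+1}{2}}$. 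The paper's proof avoids this by bounding the block-diagonal term with the \emph{iterated NCK inequality} in its $\|h\|_{L^{c\log m}}$-moment form (Theorem~\ref{thm:iteratednck} together with Remark~\ref{rem:nckwithc}), which costs only $\alpha(h)^{q-1}\log^{\frac{q-1}{2}}\sigma(\interflatta{2:q}{\onewithcolor,q+1}{\onewithcolor,q+2})\le\alpha(h)^{q-1}\log^{\frac{q-1}{2}}r(\AA)$ by Lemma~\ref{lem:finalmixedflattenings}; multiplying by the $\alpha(h)\log(d+m)$ prefactor from the linear inequality then yields exactly $\alpha(h)^{q}\log^{\frac{q+1}{2}}r(\AA)$. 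The same substitution is needed in your lower-bound argument: the subtracted correction only accumulates to $\alpha(h)^q\log^{\frac{q}{2}}r(\AA)$ if the max term at each stage is controlled by iterated NCK, not by the Rosenthal hypothesis.

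Two smaller issues. First, your per-block bound $\|\max_{i_1}\|Z_{i_1}\|\|_{L^p}\le m^{1/p}\max_{i_1}\|Z_{i_1}\|_{L^p}$ requires $L^p$-moment control of the inner chaoses, whereas the induction hypothesis as stated only controls expectations; the paper sidesteps this by working with $\E\norm{\interflatty{2:q}{\onewithcolor,q+1}{\onewithcolor,q+2}}$ directly. Second, the parameter $\alpha(h)=\|h\|_{L^{\log(d+m)}}$ does not pass through the induction as written, because the intermediate flattenings have dimension up to $md$; the paper proves the statement for all $\alpha_c(h)=\|h\|_{L^{c\log(d+m)}}$, $c>0$, and invokes the hypothesis with $c\leftarrow c/2$ so that $\tfrac{c}{2}\log(md+m)\le c\log(d+m)$. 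Both are fixable, but they need to be addressed.
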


This result may be viewed as an analogue of the iterated NCK inequality 
(Theorem \ref{thm:iteratednck})
where the distributional parameter $\alpha(h)$ only appears in the second-order term that is controlled by $r(\AA)$. Therefore, when $r(\mathcal{A})\ll\sigma(\mathcal{A})$, the parameter $\sigma(\mathcal{A})$ captures the spectral norm up to a logarithmic factor even in (e.g., sparse) situations where $\alpha(h)$ may diverge.

\subsubsection{Iterated strong matrix Rosenthal}

Just as the strong NCK inequality eliminates the dimensional factor
in the NCK inequality in many situations, there is an analogous strong form of the matrix Rosenthal inequality  \cite{Brailovskaya2022UniversalityAS}, see Theorem \ref{thm:linearuniversalityrosenthal}. We now formulate a corresponding matrix chaos inequality, whose proof will be given in Appendix~\ref{sec:proof:thm:iteratedstrongrose}.

\begin{theorem}[Iterated strong Matrix Rosenthal]\label{thm:iteratedstrongrose}
    Let $Y$ be a decoupled chaos as in~\eqref{eq:decnongaussianchaos}. Assume that $h$ has unit variance, and define the 
    parameter $\alpha(h) = \|h\|_{L^{\log(d+m)}}$. Then 
    \begin{equation*}
        \E\norm{Y} \qlesssim \sigma(\AA)+ \alpha(h)^q \log(d+m)^{\frac{q+3}{2}} v(\AA).
    \end{equation*}
\end{theorem}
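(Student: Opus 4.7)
The proof strategy is to iterate the linear strong matrix Rosenthal inequality (Theorem~\ref{thm:linearuniversalityrosenthal}) over the $q$ independent vectors $\vect{h}^{(1)},\ldots,\vect{h}^{(q)}$ appearing in the decoupled chaos $Y$, following the blueprint outlined in section~\ref{subsec:iterapproach} and already employed in the proofs of Theorems~\ref{thm:iteratednck}--\ref{thm:iteratedrose}. The argument proceeds by strong induction on $q$: the base case $q=1$ is exactly the linear inequality applied to $Y=\sum_{i}h_i^{(1)} A_i$, and the inductive step uses one of the $\vect{h}^{(t)}$ as the ``current'' source of randomness while conditioning on the remaining $q-1$ vectors.

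For the inductive step, I would fix $t=1$ and rewrite
\[
Y \;=\; \sum_{i_1=1}^m h_{i_1}^{(1)} B_{i_1}, \qquad B_{i_1} \;:=\; \sum_{i_2,\ldots,i_q\in[m]} h_{i_2}^{(2)}\cdots h_{i_q}^{(q)} A_{i_1,\ldots,i_q}.
\]
Conditional on $\vect{h}^{(2)},\ldots,\vect{h}^{(q)}$, the matrix $Y$ is a weighted sum of independent random matrices, so Theorem~\ref{thm:linearuniversalityrosenthal} yields a conditional bound on $\E[\|Y\|\mid \vect{h}^{(2)},\ldots,\vect{h}^{(q)}]$ consisting of three contributions: a $\sigma$-type variance term controlled by $\max(\|\sum_{i_1}B_{i_1}B_{i_1}^\adj\|^{1/2},\|\sum_{i_1}B_{i_1}^\adj B_{i_1}\|^{1/2})$; a weak-variance term that carries an additional $\log(d+m)^{1/2}$ factor; and a ``Rosenthal'' moment term governed by $\alpha(h)$ with an additional $\log(d+m)$ factor. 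Using the key identity that $\max(\|\sum_{i_1}B_{i_1}B_{i_1}^\adj\|^{1/2},\|\sum_{i_1}B_{i_1}^\adj B_{i_1}\|^{1/2})$ equals the spectral norm of the block matrix $[B_1\,|\,\cdots\,|\,B_m]$ (up to swapping rows and columns), the variance term is itself the expected spectral norm of a decoupled chaos of order $q-1$ whose coefficient tensor is a flattening $\AA^{(1)}$ of $\AA$ obtained by moving coordinate~$1$ to the column (resp.\ row) side. Analogous identifications apply to the weak-variance and Rosenthal terms. Applying the inductive hypothesis to each of these lower-order chaoses, after taking the outer expectation, yields the recursive bound.

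The bookkeeping is the technical heart of the argument. Every $\sigma$-flattening of $\AA^{(1)}$ is a $\sigma$-flattening of $\AA$ with $i_1$ on the column side, and likewise every $v$-flattening of $\AA^{(1)}$ is a $v$-flattening of $\AA$; hence $\sigma(\AA^{(1)})\leq\sigma(\AA)$ and $v(\AA^{(1)})\leq v(\AA)$, and the recursive application of the inductive hypothesis does not blow up these parameters. The variance branch therefore contributes bare $\sigma(\AA)$ with no additional logarithmic factor, since the strong Rosenthal inequality charges no logarithm to its $\sigma$-term. The weak-variance and Rosenthal branches each accrue a $\log(d+m)^{1/2}$ and a factor of $\alpha(h)$ per iteration on top of the $v$-flattening contribution; after $q$ iterations the result is the stated $\alpha(h)^q\log(d+m)^{(q+3)/2}v(\AA)$ term, where the $(q+3)/2$ exponent arises as $q-1$ copies of the per-iteration $\log^{1/2}$ plus an extra $\log^{2}=\log^{4/2}$ from the final invocation of the linear inequality, matching the base case exponent $(1+3)/2=2$.

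The main obstacle will be to verify that the Rosenthal (moment) branch collapses into the $v$-flattening contribution rather than forcing an $r(\AA)$ term into the final bound, as it does in Theorem~\ref{thm:iteratedrose}. This is precisely the improvement of Theorem~\ref{thm:iteratedstrongrose} over its classical counterpart, and it reflects the fact that the moment term in the strong linear Rosenthal inequality is already bounded by a $v$-type structural quantity (namely, one obtained by moving both matrix coordinates to the column side) rather than the $r$-type quantity that arises from the weaker moment term in Theorem~\ref{thm:rosenthal}. Establishing this identification, and verifying that the logarithmic factors accumulated at each level of the recursion combine cleanly into the single $\log(d+m)^{(q+3)/2}$ factor claimed, is the principal task carried out in the detailed proof in Appendix~\ref{sec:proof:thm:iteratedstrongrose}.
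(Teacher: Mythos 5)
Your overall architecture matches the paper's: induction on $q$, conditioning on all but one $\vect{h}^{(t)}$, applying the linear strong Rosenthal inequality (Theorem~\ref{thm:linearuniversalityrosenthal}), identifying the resulting matrix parameters with intermediate flattenings, and using the fact that flattenings of intermediate flattenings are flattenings of $\AA$ (Lemma~\ref{lem:finalmixedflattenings}), so that the $\sigma$- and $v$-parameters do not grow along the recursion. You also correctly locate why no $r(\AA)$ term survives: the estimate $r\le v$ is already absorbed at the linear level.

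The gap is in how you close the $v$-branch of the recursion. You propose ``applying the inductive hypothesis to each of these lower-order chaoses,'' but the induction hypothesis cannot be applied to the $v$-type intermediate flattening $\interflatty{1:q-1}{\qwithcolor}{q+1, q+2}$: the top-level application of Theorem~\ref{thm:linearuniversalityrosenthal} already charges $\alpha(h)\log(d+m)^{2}$ to this branch, and the induction hypothesis would then contribute a further $\alpha(h)^{q-1}\log(d+m)^{\frac{q+2}{2}}v(\AA)$, for a total of $\alpha(h)^{q}\log(d+m)^{\frac{q+6}{2}}v(\AA)$ --- larger than the target by $\log^{3/2}$. (Recursing the linear inequality at every level is worse still, giving $\log^{2q}$.) The paper instead finishes this branch with the \emph{iterated NCK inequality} (Theorem~\ref{thm:iteratednck} together with Remark~\ref{rem:nckwithc}), which costs only $\alpha(h)^{q-1}\log(d+m)^{\frac{q-1}{2}}$ times $\sigma\brap{\interflatta{1:q-1}{\qwithcolor}{q+1, q+2}}$, and the latter is at most $v(\AA)$ by Lemma~\ref{lem:finalmixedflattenings}; this is exactly what produces the exponent $2+\frac{q-1}{2}=\frac{q+3}{2}$. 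Your accounting of the exponent ($q-1$ copies of $\log^{1/2}$ plus one $\log^{2}$) gets the arithmetic right but misattributes the $\log^{2}$ to the ``final'' invocation: it arises from the single invocation of the strong Rosenthal inequality at the moment the recursion enters the $v$-branch, after which one must switch to NCK rather than continue with the strong Rosenthal hypothesis. A secondary omission: the moment parameter must be tracked as $\alpha_c(h)=\|h\|_{L^{c\log(d+m)}}$ with $c$ halved at each level, since the intermediate flattenings have dimension up to $md$ rather than $d$.
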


\smallskip

Let us note that the inequality $r(\AA) \leq v(\AA)$ always holds (Lemma~\ref{lem:comparison_v_r}), so that $r(\AA)$ need not be computed when applying an inequality in which $v(\AA)$ already appears. In particular, the lower bound corresponding to Theorem~\ref{thm:iteratedstrongrose} follows from Theorem~\ref{thm:iteratedrose}. 
The significance of Theorem~\ref{thm:iteratedstrongrose} is that when $\alpha(h)^qv(\AA)\ll\sigma(\AA)$, we obtain $\E\norm{Y}\asymp_q \sigma(\AA)$ without
a logarithmic factor.

\subsection{Iteration approach}\label{subsec:iterapproach}

We now outline the basic iteration approach to the proofs of our matrix chaos inequalities. For NCK this approach dates back at least to \cite{Haagerup1993BoundedLO}, and we will explain how it can be adapted to capture the strong inequalities. We defer detailed proofs to Appendix~\ref{section:proofs}.

\subsubsection{The linear case}\label{subsubsec:flattlincase}

The key observation behind the proofs is that the linear matrix concentration inequalities can be reinterpreted in terms of flattenings. Once they have been reformulated in this manner, the chaos inequalities will follow seamlessly by induction.

Let us begin by illustrating the case of NCK. Consider a matrix chaos $Y$ as in \eqref{eq:decnongaussianchaos} of order $q=1$, that is, $Y=\sum_{i\in[m]} h_i A_i$. The NCK inequality (Theorem \ref{thm:nck}) states that
\begin{equation}
\label{eq:nckintro}
    \E \|Y\| \lesssim \|h\|_{\psi_2}\log(d)^{\frac{1}{2}} (\sigma_R(Y)\vee\sigma_C(Y))
\end{equation}
with
$$
    \sigma_R(Y) \coloneqq \Bigg\|\sum_{i\in[m]} A_i^\top A_i\Bigg\|^{\frac{1}{2}},\qquad\qquad\qquad\qquad
    \sigma_C(Y) \coloneqq \Bigg\|\sum_{i\in[m]} A_i A_i^\top\Bigg\|^{\frac{1}{2}}.
$$
To reformulate this inequality in terms of flattenings, note that
\begin{equation*}
    \quad\flatta{\onewithcolor,2}{3} = 
    \sum_{i\in[m]} e_i\otimes A_i =
    \begin{pmatrix} A_1 \\ \vdots \\ A_m\end{pmatrix}, \qquad\quad
\flatta{2}{\onewithcolor,3} = 
    \sum_{i\in[m]} e_i^\top\otimes A_i =
    \begin{pmatrix} A_1 & \cdots & A_m\end{pmatrix}.
\end{equation*}
As
$$
    \flatta{\onewithcolor,2}{3}^\adj \flatta{\onewithcolor,2}{3} = \sum_{i\in[m]} A_i^\adj A_i,\qquad\qquad\qquad\quad
    \flatta{2}{\onewithcolor,3} \flatta{2}{\onewithcolor,3}^\adj = \sum_{i\in[m]} A_i A_i^\adj,
$$
we have clearly shown that $\sigma_R(Y)=\|\flatta{\onewithcolor,2}{3}\|$ and
$\sigma_C(Y)=\|\flatta{2}{\onewithcolor,3}\|$. Note that in this notation, the NCK inequality \eqref{eq:nckintro} is essentially recovered as the $q=1$ case of Theorem \ref{thm:iteratednck}.

The strong NCK, Rosenthal, and strong Rosenthal inequalities (Theorems \ref{thm:freenck}, \ref{thm:rosenthal}, and \ref{thm:linearuniversalityrosenthal}) involve two additional matrix parameters 
$$
    v(Y) \coloneqq \|\Cov(Y)\,\|^{\frac{1}{2}},\qquad\qquad
    r(Y) \coloneqq \max_{i\in[m]}\|A_i\|,
$$
where $\Cov(Y)$ denotes the covariance matrix of the entries of $Y$. We now observe that these parameters can also be reformulated in terms of flattenings. To this end, note that
\begin{alignat*}{3}
&   \flatta{\onewithcolor}{2,3} &&= 
    \sum_{i\in[m]} e_i\otimes \vec(A_i)^\adj &&=
    \begin{pmatrix}\vec(A_1)^\adj \\ \vdots \\ \vec(A_m)^\adj\end{pmatrix}, \\
&   \flatta{\onewithcolor,2}{\onewithcolor,3} &&=
    \sum_{i\in[m]} e_i\otimes e_i^\adj \otimes A_i &&=
    \begin{pmatrix}A_1 & \cdots & 0 \\ \vdots & \ddots & \vdots \\ 0 & \cdots & A_m\end{pmatrix},
\end{alignat*}
where $\vec(\cdot)$ denotes the operation that arranges all the entries of a matrix in a column vector. As $\flatta{\onewithcolor}{2,3}^\adj\flatta{\onewithcolor}{2,3} = \Cov(Y)$, it follows directly that
$v(Y) = \|\flatta{\onewithcolor}{2,3}\|$. The operator norm of a block-diagonal matrix equals the maximum operator norm of its blocks, hence $r(Y)=\|\flatta{\onewithcolor,2}{\onewithcolor,3}\|$.
In this notation, the strong NCK and (strong) Rosenthal inequalities are again essentially recovered as the $q=1$ case of the corresponding matrix chaos inequalities.

\subsubsection{Iteration}
\label{sec:iteration}

Now let $Y$ be a decoupled chaos as in~\eqref{eq:decnongaussianchaos} of order $q\ge 2$. If we condition on the random variables associated with the first $q-1$ chaos coordinates (the vectors $\vect{h}^{(1)}, \ldots, \vect{h}^{(q-1)}$), then $Y$ can be written as a linear chaos with random coefficients:
\begin{equation}\label{eq:YaslinearChaos}
    Y = \sum_{i_{\qwithcolor}} h_{i_\qwithcolor}^{(\qwithcolor)} \brap{\sum_{i_1, \ldots, i_{q-1}} h_{i_1}^{(1)} \cdots h_{i_{q-1}}^{(q-1)} A_{i_1, \ldots, i_{q}}} = \sum_{i_\qwithcolor} h_{i_\qwithcolor}^{(\qwithcolor)} B_{i_\qwithcolor}.
\end{equation}
Applying the linear inequalities to the random matrix $Y = \sum_{i_\qwithcolor} h_{i_\qwithcolor}^{(\qwithcolor)} B_{i_\qwithcolor}$ yields upper bounds in terms of four possible flattenings, each of which can itself be interpreted as a matrix chaos of order $q-1$.
For example, the $\sigma_R(Y)$ parameter of this random matrix is the norm of
\begin{equation}\label{eq:distributivityintermediate}
    \begin{aligned}
    \sum_{i_{\qwithcolor}} e_{i_\qwithcolor}\otimes B_{i_\qwithcolor}
    &= \sum_{i_{\qwithcolor}} e_{i_\qwithcolor} \otimes \brap{\sum_{i_1, \ldots, i_{q-1}} h_{i_1}^{(1)} \cdots h_{i_{q-1}}^{(q-1)} A_{i_1, \ldots, i_{q}}}\\
    &= \sum_{i_{\qwithcolor}} e_{i_\qwithcolor} \otimes \brap{\sum_{i_1, \ldots, i_{q-1}} h_{i_1}^{(1)} \cdots h_{i_{q-1}}^{(q-1)} \brap{\sum_{i_{q+1}, i_{q+2}} e_{i_{q+1}} \otimes e_{i_{q+2}}^\adj \, \AA_{i_1, \ldots, i_{q+2}}}}\\
    &= \sum_{i_1, \ldots, i_{q-1}} h_{i_1}^{(1)} \cdots h_{i_{q-1}}^{(q-1)} 
    \brap{
    \sum_{i_\qwithcolor,i_{q+1},i_{q+2}}
    \brap{e_{i_\qwithcolor} \otimes e_{i_{q+1}}}\otimes e_{i_{q+2}}^\adj\, \AA_{i_1, \ldots, i_{q+2}} },
    \end{aligned}
\end{equation}
which is a decoupled matrix chaos of order $q-1$ with matrix coefficients of dimension 
$md_1 \times d_2$.
Analogous expressions hold for the remaining matrix parameters. In this manner, the expected norm of a matrix chaos of order $q$ is bounded by the expected norms of matrix chaoses of order $q-1$, and the proofs can proceed by induction on $q$.

To formalize the above procedure, we introduce the following notation.
Given $Z, R, C \subseteq [q+2]$ with $Z \subseteq [q]$, we define the
\textbf{intermediate flattening}
\begin{equation}\label{eq:intermediateflattenings}
    \interflatty{Z}{R}{C} \coloneqq \sum_{i_1, \ldots, i_{q+2}} 
    \brap{\bigotimes_{t \in R} e_{i_t}} \otimes\brap{\bigotimes_{t \in C} e_{i_t}^\adj} 
    \brap{\prod_{t \in Z} h_{i_t}^{(t)}} 
    \AA_{i_1, \ldots, i_{q+2}},
\end{equation}
Note that $\interflatty{Z}{R}{C}$ (with $Z\neq\emptyset$) is a decoupled matrix chaos of order $|Z|$. We will denote by $\interflatta{Z}{R}{C}$ the tensor of order $|Z|+2$ associated with the chaos $\interflatty{Z}{R}{C}$. The intermediate flattening in~\eqref{eq:distributivityintermediate} corresponds precisely to $\interflatty{1:q-1}{\qwithcolor, q+1}{q+2}$.

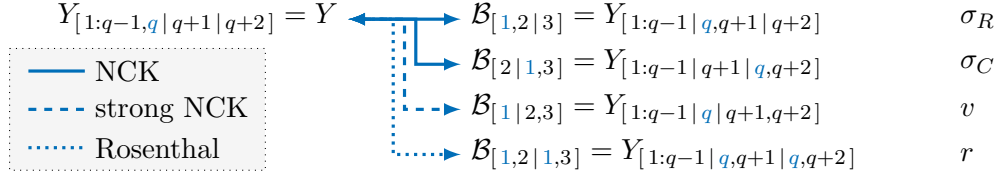
\begin{figure}
    \begin{tikzpicture}
        \def\textaxis{-8.75}; \def\leftmost{-10.5}; \def\rightmost{7};
        \def\levelwidth{0.6};
        
        \def\leftaxis{0};
        \def\rightaxis{1.5};
        \def\legenaxis{-3.5};
        \def\offset{0.15}
    
        \node[left] at (\leftaxis, 1.5*\levelwidth) {$\interflatty{1:q-1, \qwithcolor}{q+1}{q+2} = Y$};
        \node[right] at (\rightaxis, 1.5*\levelwidth) {$\flattb{\onewithcolor, 2}{3} = \interflatty{1:q-1}{\qwithcolor, q+1}{q+2}$};
        \node[right] at (\rightaxis, 0.5*\levelwidth) {$\flattb{2}{\onewithcolor, 3} = \interflatty{1:q-1}{q+1}{\qwithcolor, q+2}$};
        \node[right] at (\rightaxis, -0.5*\levelwidth) {$\flattb{\onewithcolor}{2, 3} = \interflatty{1:q-1}{\qwithcolor }{q+1, q+2}$};
        \node[right] at (\rightaxis, -1.5*\levelwidth) {$\flattb{\onewithcolor, 2}{\onewithcolor, 3} = \interflatty{1:q-1}{\qwithcolor, q+1}{\qwithcolor, q+2}$};
        
        \node[right] at (\rightaxis+6.5, 1.5*\levelwidth) {$\sigma_R$};
        \node[right] at (\rightaxis+6.5, 0.5*\levelwidth) {$\sigma_C$};
        \node[right] at (\rightaxis+6.5, -0.5*\levelwidth) {$v$};
        \node[right] at (\rightaxis+6.5, -1.5*\levelwidth) {$r$};
    
        \myarrow{1.5}{-0.5}{0}{0}{\specialcolor,dashed}
        \myarrow{1.5}{-1.5}{0}{-\offset}{\specialcolor,dotted}
        \myarrow{1.5}{1.5}{0}{\offset}{\specialcolor}
        \myarrow{1.5}{0.5}{0}{\offset}{\specialcolor}
    
        \draw[-, dotted, fill=gray!8] (\legenaxis-1, 0.85*\levelwidth) -- (\legenaxis+2.4, 0.85*\levelwidth) -- (\legenaxis+2.4, -1.85*\levelwidth) --  (\legenaxis-1, -1.85*\levelwidth) -- (\legenaxis-1, 0.85*\levelwidth);
    
        \legendline{0.35}{\specialcolor}
        \node[right] at (\legenaxis, 0.35*\levelwidth) {NCK};
    
        \legendline{-0.5}{\specialcolor,dashed}
        \node[right] at (\legenaxis, -0.5*\levelwidth) {strong NCK};
    
        \legendline{-1.35}{\specialcolor,dotted}
        \node[right] at (\legenaxis, -1.35*\levelwidth) {Rosenthal};
    \end{tikzpicture}
    \caption{Intermediate flattenings that arise from each matrix parameter. Here $\BB$ is the (random) tensor of order $3$ associated to the linear chaos $\sum_{i_\onewithcolor} h_{i_\onewithcolor} B_{i_\onewithcolor}$ in~\eqref{eq:YaslinearChaos}.}
    \label{fig:branching}
\end{figure}

Using this notation, applying the linear matrix concentration inequalities to the original chaos 
$Y = \interflatty{1:q-1, \qwithcolor}{q+1}{q+2}$ of order $q$ yields bounds in terms of four intermediate chaoses of order $q-1$ as described in Figure~\ref{fig:branching}. To prove the our main inequalities, we can iterate this procedure until the order of the (intermediate) flattenings has been reduced to zero. The resulting final flattenings $\interflatty{\emptyset}{R}{C}$ are deterministic and are equal to both $\interflatta{\emptyset}{R}{C}$ and $\flatta{R}{C}$.
In practice, this procedure is easily implemented by induction on $q$.
The details are deferred to Appendix~\ref{section:proofs}.

\section{Chaos of combinatorial type}\label{sec:3}

While the matrix chaos inequalities of the previous section can capture a large class of models, their application may appear daunting due to the large number of flattenings that must be controlled. However, the construction of these flattenings in section \ref{sec:flattenings} by means of tensor products of canonical basis vectors $e_i$ and their transposes $e_i^\top$ suggests that the norms of the flattenings should be especially easy to control if the matrix coefficients $A_{i_1,\ldots,i_q}$ can themselves be expressed as tensor products of $e_i$ and $e_i^\top$, resulting in $\brac{0,1}$-matrices with many symmetries.

This observation naturally leads us in this section to define a special class of \emph{matrix chaoses of combinatorial type}, for which the parameters in all our matrix chaos inequalities can be computed mechanically by a simple rule. Remarkably, it turns out that many matrix chaoses that arise in theoretical computer science applications are of this special form: two important examples are graph matrices~\cite{Meka2015SumofsquaresLB,ahn2016graph} and Khatri-Rao matrices~\cite{KhatriRao1968SoTS,Kasiviswanathan2010ThePO,De2011LowerBI,Rudelson2011RowPO}. Whenever this structure is present, our methods will reduce the study of such models to a nearly trivial computation. As will be illustrated in section \ref{sec:applications}, this enables us to achieve the best known results in the literature for several applications using a unified and remarkably simple analysis.

\subsection{Definition and guiding example}

In order to motivate the general definition of chaos of combinatorial type, we begin by introducing the guiding example of Khatri-Rao matrices. These are random matrices with dependent entries whose study dates back to at least the 1960s~\cite{KhatriRao1968SoTS}, and have more recently been used in the context of differential privacy~\cite{Kasiviswanathan2010ThePO,De2011LowerBI}.

\begin{example}[Khatri-Rao matrices]\label{example:khatri-rao}
We begin by stating the definition.

\begin{definition}\label{def:khatri-rao}
    Let $q, n, d$ be positive integers, let $h$ be a scalar random variable with zero mean and unit variance, and let let $W^{(1)},\dots, W^{(q)}$ be $d\times n$ random matrices whose entries are i.i.d.\ copies of $h$.
    The \emph{Khatri-Rao matrix $Y$} is the $d^q\times n$ random matrix obtained by taking the column-wise Kronecker product of $W^{(1)},\dots, W^{(q)}$, that is, the matrix whose entries
    are defined by
    \begin{equation}
    \label{eq:kraodef}
        Y[(j_1,\dots,j_q),k] = \prod_{t=1}^{q} W^{(t)}[j_t,k],
    \end{equation}
    for any $j_1,\dots,j_q\in[d]$ and $k\in[n]$.
\end{definition}

The Khatri-Rao matrix \eqref{eq:kraodef} can be equivalently expressed as a decoupled matrix chaos
\begin{equation}\label{eq:KRasChaos}
        Y = \sum_{j_1,\dots,j_q\in[d], k\in[n]}  \left(\prod_{t=1}^{q}W^{(t)}[j_t,k]\right)\, e_{j_1}\otimes\cdots\otimes e_{j_q} \otimes e_k^\top
\end{equation}
with the special property that every matrix coefficient
$A_{(j_1,k),\ldots,(j_q,k)}=e_{j_1}\otimes\cdots\otimes e_{j_q} \otimes e_k^\top$ is a tensor product of coordinate basis vectors and their transposes.\footnote{Formally speaking, the definition \eqref{eq:decnongaussianchaos} of a matrix chaos requires us to assign independent indices to each coordinate of $A_{(j_1,k_1),\ldots,(j_q,k_q)}$. In order to capture \eqref{eq:kraodef}, we would then set $A_{(j_1,k_1),\ldots,(j_q,k_q)}=0$ except when $k_1=\cdots=k_q=k$. To lighten the notation, however, we will generally drop these zero coefficients from the summation as in \eqref{eq:KRasChaos}.}
\end{example}

A characteristic feature of the above example is that even though each matrix coefficient is a tensor product of coordinate basis vectors and their adjoints, the indices of these coordinate vectors may simultaneously appear in the coordinates corresponding to distinct random vectors
$\boldsymbol{h}^{(t)}$ in the definition \eqref{eq:decnongaussianchaos} of a decoupled matrix chaos.
In this example, the coordinate basis vectors that define the matrix coefficients are indexed by $j_1,\dots,j_q,k$, which we call \textbf{summation indices}. 
Each random vector $\boldsymbol{h}^{(t)}$ is indexed by an an ordered subset $(j_t,k)$ of the summation indices, which we call \textbf{chaos coordinates}. Finally, the entries of the matrix coefficients are also indexed by ordered subsets $(j_1,\ldots,j_q)$ and $k$ of the summation indices, which we call \textbf{matrix coordinates}.

The above structure is generalized by the notion of matrix chaos of combinatorial type.

\begin{definition}[Matrix Chaos of Combinatorial type]\label{def:combinatorialtype}
    Let $h$ be a scalar random variable with zero mean, let $q, p, S_1,\ldots, S_p$ be positive integers, and let $I_1,\ldots,I_{q+2}$ be ordered subsets of $[p]$.
    A \emph{matrix chaos of combinatorial type} is defined by
    \begin{equation}\label{eq:chaoscombinatorialtype}
        Y = \sum_{\s\in[S_1]\times\cdots\times[S_p]} h^{(1)}_{I_1(\s)}\cdots h^{(q)}_{I_q(\s)} \, e_{I_{q+1}(\s)} \otimes e_{I_{q+2}(\s)}^\top,
    \end{equation}
    where for
    $I=(i_1,\ldots,i_k)$ and $\s=(s_1,\ldots,s_p)$ we define $I(\s):=(s_{i_1},\ldots,s_{i_k})$,
    $$
            e_{(s_{i_1},\ldots,s_{i_k})} := e_{s_{i_1}}\otimes e_{s_{i_2}}\otimes  \cdots
            \otimes e_{s_{i_k}},
    $$
    and $h^{(t)}_{(s_{i_1},\ldots,s_{i_k})}$ are independent copies of $h$.
    Here $s_1,\dots,s_p$ are \emph{summation indices}; $I_1(\s),\dots,I_q(\s)$ are  \emph{chaos coordinates}; and $I_{q+1}(\s),I_{q+2}(\s)$ are \emph{matrix coordinates}.
\end{definition}

Further examples of chaos of combinatorial type will be treated in section~\ref{sec:applications}.

\subsection{How to compute norms of flattenings}

The aim of this section is to develop a user-friendly procedure to compute the norms of flattenings of chaoses of combinatorial type (Algorithm~\ref{alg:buildthetable}).

\subsubsection{The Khatri-Rao example as a warm-up} 
\label{sec:khwarmup}

We again use the guiding example of Khatri-Rao matrices to illustrate the procedure.
We focus on the case $q=2$ for simplicity.

Let $Y$ be a Khatri-Rao matrix as in \eqref{eq:KRasChaos} with $q=2$. In the notation of Definition~\ref{def:combinatorialtype} we have $q=2$ and $p=3$; the summation indices are $j_1,j_2\in[d]$ and $k\in[n]$; the chaos coordinates are given by $I_1(j_1,j_2,k)=(j_1,k)$ and $I_2(j_1,j_2,k)=(j_2,k)$, and the matrix coordinates are given by $I_3(j_1,j_2,k)=(j_1,j_2)$ and $I_4(j_1,j_2,k)=k$; and $h^{(t)}_{(j_t,k)}=W^{(t)}[j_t,k]$. We can therefore write
$$
    Y = \sum_{j_1,j_2\in[d],k\in[n]}h_{(j_1,k)}^{(1)}h_{(j_2,k)}^{(2)}\, e_{(j_1,j_2)} \otimes e_k^{\top}.
$$
For the sake of exposition, let us focus on the flattening $\flatta{1,2,3}{4}$. This is the $\sigma$-flattening where both chaos coordinates are in the row set $R$ (see~\eqref{eq:flattenings}). It is given by
\begin{eqnarray}
    \flatta{1,2,3}{4} & = & \sum_{j_1,j_2\in[d],k\in[n]} e_{(j_1,k)} \otimes e_{(j_2,k)} \otimes e_{(j_1,j_2)} \otimes e^\top_{k} \nonumber \\
    & = & \sum_{j_1,j_2\in[d],k\in[n]} e_{j_1} \otimes e_{k} \otimes e_{j_2} \otimes e_{k} \otimes e_{j_1} \otimes e_{j_2} \otimes e^\top_{k},\nonumber
\end{eqnarray}
where in the last line we used $e_{(j_1,k)} = e_{j_1} \otimes e_{k}$ (and similarly for other coordinates).

By permuting the order of tensor products (which corresponds to reordering rows and columns, and so preserves all singular values of the matrix), we obtain
\begin{eqnarray}
    \flatta{1,2,3}{4} &\simeq& \sum_{j_1,j_2\in[d],k\in[n]} e_{j_1} \otimes e_{j_1}  \otimes e_{j_2} \otimes  e_{j_2}  \otimes e_{k} \otimes e_{k} \otimes e^\top_{k} \label{eq:expanding_in_summation_indices}\\
     &=& \left(\sum_{j_1\in[d]} e_{j_1} \otimes e_{j_1} \right) \otimes \left(\sum_{j_2\in[d]}   e_{j_2} \otimes  e_{j_2} \right) \otimes\left(\sum_{k\in[n]} e_{k} \otimes e_{k} \otimes e^\top_{k} \right), \label{eq:evaluated_norms}
\end{eqnarray}
where $A\simeq B$ means that the two matrices are related by a unitary change of basis. We thus have
\begin{equation*}
    \left\|\flatta{1,2,3}{4}\right\| =
    \underbrace{\norm{\sum_{j_1\in[d]} e_{j_1} \otimes e_{j_1}}}_{=\sqrt{d}}
    \underbrace{\norm{\sum_{j_2\in[d]} e_{j_2} \otimes e_{j_2}}}_{=\sqrt{d}}
    \underbrace{\norm{\sum_{k\in[n]} e_{k} \otimes e_{k} \otimes e^\adj_{k}}}_{=1}
    = d,
\end{equation*}
where we used that $\{e_k\otimes e_k\}$ are orthonormal vectors and therefore, by a unitary change of basis and restriction to a subspace, $\sum_{j\in[d]} e_j\otimes e_j$ and $\sum_{k\in[n]} e_k\otimes e_k\otimes e_k^\top$ may be viewed as a $d$-dimensional vector of ones and an $n$-dimensional identity matrix, respectively.

Repeating this procedure for the other 11 flattenings (see Table~\ref{tab:KR2} below), we readily obtain 
\begin{equation}
\label{eq:combinatorialKR2-parameters}
    \sigma(\AA) = \max\brac{d, n^{\frac{1}{2}}}, \qquad\quad v(\AA) =  r(\AA) = d^{\frac{1}{2}}.
\end{equation}
Since $\sigma(\AA)$  dominates both  $v(\AA)$ and $r(\AA)$, Theorems~\ref{thm:iteratedrose}~and~\ref{thm:iteratedstrongrose} imply that
\begin{equation}\label{eq:combinatorialKR2-bound}
    \E\norm{Y} \mathop{\asymp_q}
    \max\brac{d, n^{\frac{1}{2}}}
\end{equation}
for $d, n \to \infty$ at any relative speed, provided that $\alpha(h)$ is sub-polynomial in $d,n$.

\subsubsection{The general case}\label{subsubsec:howtocompute}

Analogously to~\eqref{eq:expanding_in_summation_indices}, any flattening of a general chaos of combinatorial type can be written (after reordering the tensor products) as
$$
    \flatta{R}{C} \simeq \bigotimes_{u=1}^p \sum_{s_u\in[S_u]}\left(  \underbrace{e_{s_u} \otimes \cdots \otimes e_{s_u}}_{\mu_u \text{ factors}} \otimes \underbrace{e_{s_u}^{\top} \otimes \cdots \otimes e_{s_u}^{\top}}_{\nu_u \text{ factors}}\right), 
$$
where $\mu_u$ and $\nu_u$ are non-negative integers. By convention, if $\mu_u=\nu_u=0$,
the tensor product inside the brackets is to be interpreted as the scalar $1$.

The calculation now proceeds by noting, as in~\eqref{eq:evaluated_norms}, that
\begin{equation}\label{eq:mu_nu_cases}
    \left\|\sum_{s_u\in[S_u]}\left(  \underbrace{e_{s_u} \otimes \cdots \otimes e_{s_u}}_{\mu_u \text{ factors}} \otimes \underbrace{e_{s_u}^{\top} \otimes \cdots \otimes e_{s_u}^{\top}}_{\nu_u \text{ factors}}\right)\right\| = \begin{cases}
        1 &\text{if } \mu_u>0 \text{ and } \nu_u>0\\
        \sqrt{S_u} &\text{if } \mu_u > 0 \text{ xor } \nu_u > 0 \\
        S_u &\text{if } \mu_u=0 \text{ and } \nu_u=0.
    \end{cases}
\end{equation}
This can be conveniently summarized by defining by $\RR$ and $\CC$ the sets of summation indices that appear in $R$ and $C$, respectively. This yields the following result, which we prove in section~\ref{sec:proof:prop:flattenings_combinatorial}.

\begin{proposition}\label{prop:flattenings_combinatorial}
    Let $Y$ be a chaos of combinatorial type as in \eqref{eq:chaoscombinatorialtype}
    of order $q$ with $p$ summation indices. Let $R,C\subseteq[q+2]$, $\RR = \cup_{t \in R} I_t$, and $\CC = \cup_{t \in C} I_t$. Then
    \begin{equation}\label{eq:combinatorial_flattening_formula}
        \norm{\flatta{R}{C}}^2 = \brap{\prod_{u \in \RR^c}S_u} \brap{\prod_{u \in \CC^c}S_u}.
    \end{equation}
\end{proposition}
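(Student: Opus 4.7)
The approach is to generalize the explicit Khatri-Rao calculation of section~\ref{sec:khwarmup}. The key structural feature of a combinatorial chaos is that every matrix coefficient is a pure tensor product of canonical basis vectors $e_{s_u}$ indexed by summation indices $u \in [p]$. Consequently, every flattening $\flatta{R}{C}$ factors (up to a joint permutation of rows and columns) as a Kronecker product of $p$ elementary matrices, each one indexed by a single summation index, and the norm of each factor is already supplied by~\eqref{eq:mu_nu_cases}.

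Concretely, I would first substitute the combinatorial definition~\eqref{eq:chaoscombinatorialtype} into~\eqref{eq:flattenings} to obtain
$$
    \flatta{R}{C} = \sum_{\s \in [S_1] \times \cdots \times [S_p]} \brap{\bigotimes_{t \in R} e_{I_t(\s)}} \otimes \brap{\bigotimes_{t \in C} e_{I_t(\s)}^\adj},
$$
and then expand each $e_{I_t(\s)} = \bigotimes_{u \in I_t} e_{s_u}$ (with the ordering inherited from $I_t$), and similarly for its transpose. Next, I would regroup the tensor factors by summation index $u$, gathering together all $e_{s_u}$ occurring in the row part and all $e_{s_u}^\adj$ occurring in the column part. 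This regrouping is a simultaneous permutation of the rows and columns of the matrix, hence a unitary change of basis, so it preserves the spectral norm. Combined with the distributivity of tensor products over summation, this yields
$$
    \flatta{R}{C} \simeq \bigotimes_{u=1}^p \brap{\sum_{s_u \in [S_u]} e_{s_u}^{\otimes \mu_u} \otimes (e_{s_u}^\adj)^{\otimes \nu_u}},
$$
where $\mu_u := |\{t \in R : u \in I_t\}|$ and $\nu_u := |\{t \in C : u \in I_t\}|$.

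To conclude, I would invoke multiplicativity of the spectral norm across Kronecker products and evaluate each factor using~\eqref{eq:mu_nu_cases}. Observing that $\mu_u > 0 \Leftrightarrow u \in \RR$ and $\nu_u > 0 \Leftrightarrow u \in \CC$, a four-case check on the membership of $u$ in $\RR$ and $\CC$ matches the per-factor value ($1$, $\sqrt{S_u}$, $\sqrt{S_u}$, or $S_u$) with the corresponding contribution to $\prod_{u \in \RR^c} S_u \cdot \prod_{u \in \CC^c} S_u$, yielding~\eqref{eq:combinatorial_flattening_formula} after squaring. The only step that requires actual care is the bookkeeping of the tensor regrouping---verifying that grouping by summation index is implementable as a permutation of rows and columns, and that the multiplicities $\mu_u, \nu_u$ correctly translate into the defining conditions for $\RR$ and $\CC$. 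Everything else is mechanical.
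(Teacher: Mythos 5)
Your argument is correct and follows essentially the same route as the paper's proof: expand each $e_{I_t(\s)}$, regroup by summation index via a row/column permutation, factor the flattening as a Kronecker product of $p$ elementary matrices, and evaluate each factor's norm via~\eqref{eq:mu_nu_cases} using the equivalences $\mu_u>0\Leftrightarrow u\in\RR$ and $\nu_u>0\Leftrightarrow u\in\CC$. No gaps.
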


This proposition yields a straightforward algorithm to compute the norms of flattenings of chaoses of combinatorial type: given a set of choices of whether each particular chaos or matrix coordinate is in $R$ or $C$, the sets $\RR = \cup_{t \in R} I_t$ and $\CC = \cup_{t \in C} I_t$ determine which summation indices belong to row and/or column matrix coordinates, and the norm of the flattening is given by~\eqref{eq:combinatorial_flattening_formula}.

\begin{algorithm}\label{alg:buildthetable}
Construct a table with the following data:
   \begin{itemize}
    \item The flattening type: $\sigma$, $v$ or $r$;
    \item For each flattening type, list all possible assignments of the chaos
    ($I_1, \ldots, I_q$) and matrix ($I_{q+1}, I_{q+2}$) coordinates to $R$, $C$, or
    $R\cap C$.
    \item Next, for each summation index, list whether it appears in $\RR$, $\CC$, or $\RR\cap\CC$.
    \item Finally, $\norm{\flatta{R}{C}}$ can be computed directly using the formula \eqref{eq:combinatorial_flattening_formula}.\footnote{In applications, is is often the case that every summation index appears in at least one of the (chaos or matrix) coordinates, so that $\RR^c\cap\CC^c=\emptyset$. In this case, the right-hand side of \eqref{eq:combinatorial_flattening_formula} is simply the product of the dimensions of all the summation indices that appear in coordinates assigned only to $R$ or only to $C$.}
    \end{itemize}
\end{algorithm}

In Table \ref{tab:KR2}, we illustrate the application of this algorithm to the $q=2$ case of the Khatri-Rao matrix, recovering the manual computation of \eqref{eq:combinatorialKR2-parameters}.

\begin{table}[t]
    \centering
    \begin{tabular}{| c | c c : c c | c c c | c |}
        \hline
        & \multicolumn{4}{c|}{coordinates} & \multicolumn{3}{c|}{summation} & \\
        type & \multicolumn{2}{c:}{chaos} & \multicolumn{2}{c|}{matrix} & \multicolumn{3}{c|}{indices} & $\text{norm}^2$ \\
        & $j_1k$ & $j_2k$ & $j_1j_2$ & $k$ & $j_1$ & $j_2$ & $k$ & \\
        \hline
        \hline
        \multirow{4}{*}{$\sigma$}
        & \myceco \myR & \myceco \myR & \myceco \myR & \myceco \myC & \myceco \myR & \myceco \myR & \myceco \myRC & \myceco$d^2$ \\
        & \myR & \myC & \myR & \myC & \myR & \myRC & \myRC & $d$ \\
        & \myC & \myR & \myR & \myC & \myRC & \myR & \myRC & $d$ \\
        & \myceco \myC & \myceco \myC & \myceco \myR & \myceco \myC & \myceco \myRC & \myceco \myRC & \myceco \myC & \myceco$n$ \\
        \hline
        \multirow{3}{*}{$v$}
        & \myR & \myR & \myC & \myC & \myRC & \myRC & \myRC & $1$ \\
        & \myR & \myC & \myC & \myC & \myRC & \myC & \myRC & $d$ \\
        & \myC & \myR & \myC & \myC & \myC & \myRC & \myRC & $d$ \\
        \hline
        \multirow{5}{*}{$r$}
        & \myR & \myRC & \myR & \myC & \myR & \myRC & \myRC & $d$ \\
        & \myC & \myRC & \myR & \myC & \myRC & \myRC & \myRC & $1$ \\
        & \myRC & \myR & \myR & \myC & \myRC & \myR & \myRC & $d$ \\
        & \myRC & \myC & \myR & \myC & \myRC & \myRC & \myRC & $1$ \\
        & \myRC & \myRC & \myR & \myC & \myRC & \myRC & \myRC & $1$ \\
        \hline
    \end{tabular}
    \caption{Flattenings of Khatri-Rao matrices (Example \ref{example:khatri-rao}) with $q = 2$ as produced by Algorithm~\ref{alg:buildthetable}. The $\sigma$, $v$, and~$r$ parameters are the maxima of the norms of the respective flattenings. The two dominant flattenings are shaded. 
    }
    \label{tab:KR2}
\end{table}

In practice, it is generally not necessary in applications to list every possible flattening,
as one can directly analyze using \eqref{eq:combinatorial_flattening_formula} which flattenings will dominate in the matrix chaos inequalities. This will be illustrated in section~\ref{subec:kr_gen_q}, where we will analyze the Khatri-Rao model for all $q\ge 2$.

\subsection{Chaos of nearly combinatorial type}

It will be useful (see Sections~\ref{sec:SOS:TensorPCA}, \ref{sec:graphmatrices}, and~\ref{subsec:ellifit}) to consider a slightly more general class of chaoses that include a weight function.

\begin{definition}[Matrix Chaos of nearly Combinatorial type]\label{def:combinatorialtypekernel}
    Let $h, q, p, S_1,\ldots, S_p,I_1,\ldots,I_{q+2}$ be as in Definition~\ref{def:combinatorialtype}, and $f \colon [S_1]\times\cdots\times[S_p] \to \R$ be a weight function. A matrix chaos of nearly combinatorial type with weight function $f$ is a chaos of the form:
    \begin{equation}\label{eq:chaosnearcombinatorialtype}
        Y^{f} = \sum_{\s\in[S_1]\times\cdots\times[S_p]} f(\s)\, h^{(1)}_{I_1(\s)}\cdots h^{(q)}_{I_q(\s)} \, e_{I_{q+1}(\s)}\otimes e_{I_{q+2}(\s)}^\top.
    \end{equation}
\end{definition}

By pointwise bounding $|f|$ by its maximum $\norm{f}_\infty$, we can generalize Proposition~\ref{prop:flattenings_combinatorial} to the following bound, whose proof we defer to section~\ref{sec:proof:prop:flattenings_combinatorial}.

\begin{proposition}[Flattenings of chaoses of nearly combinatorial type]\label{prop:flattenings_nearly_combinatorial}
    Let $Y^f$ be a chaos of nearly combinatorial type as in \eqref{eq:chaosnearcombinatorialtype}
    of order $q$, $p$ summation indices, and weight function $f$.
    Let $R,C\subseteq[q+2]$, $\RR = \cup_{t \in R} I_t$, and $\CC = \cup_{t \in C} I_t$. Then
    \begin{equation}\label{eq:nearly_combinatorial_flattening_formula}
        \norm{\flatta{R}{C}^f}^2 \leq \left\|f\right\|_\infty^2 \brap{\prod_{u \in \RR^c}S_u} \brap{\prod_{u \in \CC^c}S_u}.
    \end{equation}
\end{proposition}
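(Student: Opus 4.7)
The plan is to prove Proposition~\ref{prop:flattenings_nearly_combinatorial} by explicitly factoring $\flatta{R}{C}^f$ as a product $V D_f W^\top$ in which $V$ and $W$ are $0/1$ matrices independent of $f$ and $D_f$ is a diagonal matrix carrying the weights, then invoking submultiplicativity of the operator norm. This isolates the dependence on $f$ into a single factor whose operator norm is exactly $\norm{f}_\infty$, while the norms of $V$ and $W$ recover the two products of $S_u$'s that appear in \eqref{eq:combinatorial_flattening_formula} of Proposition~\ref{prop:flattenings_combinatorial}.

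First I would set up the factorization. For each summation tuple $\s \in [S_1]\times\cdots\times[S_p]$, define the standard basis vectors $v_\s := e_{(I_t(\s))_{t \in R}}$ and $w_\s := e_{(I_t(\s))_{t \in C}}$ in the row- and column-spaces of $\flatta{R}{C}^f$, respectively. Reading off the defining formula \eqref{eq:chaosnearcombinatorialtype} term by term gives
\[
    \flatta{R}{C}^f \;=\; \sum_\s f(\s)\, v_\s w_\s^\top \;=\; V D_f W^\top,
\]
where $V$ is the $0/1$ matrix whose $\s$-th column is $v_\s$, $W$ is the $0/1$ matrix whose $\s$-th column is $w_\s$, and $D_f$ is the diagonal matrix with entries $f(\s)$ indexed by $\s$.

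Next I would evaluate the three factor norms. Submultiplicativity yields $\norm{\flatta{R}{C}^f} \le \norm{V}\,\norm{D_f}\,\norm{W}$, and clearly $\norm{D_f} = \norm{f}_\infty$. To compute $\norm{V}^2 = \norm{V V^\top}$, I would observe that $V V^\top = \sum_\s v_\s v_\s^\top$ is a diagonal matrix whose entry at index $I$ equals $\#\{\s : (I_t(\s))_{t \in R} = I\}$. Since the tuple $(I_t(\s))_{t \in R}$ depends only on the components $s_u$ with $u \in \RR = \bigcup_{t \in R} I_t$, this count is either zero (if $I$ is not consistent) or equal to $\prod_{u \in \RR^c} S_u$ (the number of ways to fill in the unconstrained components), so $\norm{V}^2 = \prod_{u \in \RR^c} S_u$. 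The same argument produces $\norm{W}^2 = \prod_{u \in \CC^c} S_u$, and squaring the submultiplicative bound yields \eqref{eq:nearly_combinatorial_flattening_formula}.

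I do not anticipate any substantive obstacle. The decomposition $V D_f W^\top$ is essentially a bookkeeping reformulation of the tensor factorization underlying Proposition~\ref{prop:flattenings_combinatorial}, now with the weights $f(\s)$ isolated on a single diagonal; as a sanity check, setting $f \equiv 1$ recovers Proposition~\ref{prop:flattenings_combinatorial} as an equality (in that case submultiplicativity is tight, as one may verify by exhibiting aligned singular vectors for $V$ and $W$). The only minor point to track is that overlapping labels among the $I_t$ for $t \in R$ do not affect $\norm{V V^\top}$: they merely restrict which index tuples $I$ are attainable, while on its support the diagonal entry is constant at $\prod_{u \in \RR^c} S_u$.
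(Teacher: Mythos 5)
Your proof is correct, and it takes a genuinely different route from the paper's. The paper proves this by first writing $\flatta{R}{C}^f$ in the tensor-factored form from the proof of Proposition~\ref{prop:flattenings_combinatorial}, then invoking an entrywise-dominance lemma (Lemma~\ref{lem:comparison}: if $|M_{ij}| \le M'_{ij}$ then $\norm{M} \le \norm{M'}$) to pull out $\norm{f}_\infty$ and reduce to the unweighted Proposition~\ref{prop:flattenings_combinatorial}. You instead factor $\flatta{R}{C}^f = V D_f W^\top$, isolate the weight as the diagonal $D_f$, and bound via submultiplicativity. Your computation that $V V^\top$ is diagonal with entries either $0$ (for index tuples inconsistent with the identifications in $\RR$) or $\prod_{u\in\RR^c} S_u$ is correct and handles overlapping labels among the $I_t$ properly; the same goes for $W$. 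What each approach buys: the paper's proof is shorter and more modular (it treats Proposition~\ref{prop:flattenings_combinatorial} as a black box, paying only the cost of a one-line comparison lemma), while yours is self-contained and makes the rank-one bookkeeping structure explicit, which is arguably more transparent about where the two $\prod S_u$ factors come from. Both routes deliver the same inequality; your sanity-check remark about tightness at $f\equiv 1$ is not needed for the Proposition and is left unproved, but it is indeed true and is in effect what Proposition~\ref{prop:flattenings_combinatorial} asserts.
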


While the norms of flattenings could be computed exactly, Proposition~\ref{prop:flattenings_nearly_combinatorial} provides a user-friendly upper bound that enables one to directly apply Algorithm~\ref{alg:buildthetable} to chaoses of nearly combinatorial type. In sections~\ref{sec:SOS:TensorPCA}, \ref{sec:graphmatrices}, and~\ref{subsec:ellifit}, we will apply Proposition~\ref{prop:flattenings_nearly_combinatorial} to chaoses whose weight function is almost always equal to its maximum value, for which this procedure is nearly optimal.

\section{Applications}\label{sec:applications}

In this section we focus on four illustrative applications of our techniques. Further applications and extensions are deferred to a longer companion manuscript~\cite{IteratedNCK-Journal}.

\subsection{Khatri-Rao matrices}\label{subec:kr_gen_q}

Algorithm~\ref{alg:buildthetable} provides a simple recipe for computing the norms of flattenings of chaoses of combinatorial type, which can be applied manually to chaoses of small order $q$. This recipe can however also be used to reason about chaoses of arbitrary order without having to explicitly write a table for each $q$. In particular, as only the largest norm in each class of flattenings must be computed to bound  $\sigma(\mathcal{A}),v(\mathcal{A}),r(\mathcal{A})$, it suffices to analyze which choices of $R$ and $C$ minimize the number of summation indices that end up in $\RR \cap \CC$.

To illustrate this procedure, we will generalize the Khatri-Rao bound \eqref{eq:combinatorialKR2-bound} for $q=2$ to arbitrary $q \geq 2$. An analogous bound was originally derived by Rudelson~\cite[Theorem 1.3]{Rudelson2011RowPO} under more restrictive assumptions. The present bound is considerably stronger; for example, unlike the bound of \cite{Rudelson2011RowPO}, it remains valid for a large class of sparse entry distributions.

\begin{theorem}
    Let $Y$ be a Khatri-Rao random matrix as defined in Definition~\ref{def:khatri-rao}. Then
    \begin{equation*}\label{eq:KRq-bound}
        \E\norm{Y} \asymp_q \max\brac{d^{\frac{q}2}, n^{\frac12}}
    \end{equation*}
    provided that $\|h\|_{L^{q\log(d+n)}}^q \log(d+n)^{\frac{q+3}{2}} d^{\frac{q-1}{2}} =
    o(\max\{d^{\frac{q}2}, n^{\frac12}\})$.   
\end{theorem}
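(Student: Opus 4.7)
The plan is to express $Y$ as a chaos of combinatorial type in the sense of Definition~\ref{def:combinatorialtype}, compute $\sigma(\AA)$ and $v(\AA)$ via Proposition~\ref{prop:flattenings_combinatorial}, and then combine the upper bound of Theorem~\ref{thm:iteratedstrongrose} with the lower bound of Theorem~\ref{thm:iteratedrose}. As \eqref{eq:KRasChaos} already exhibits, $Y$ has $p = q+1$ summation indices $j_1, \ldots, j_q \in [d]$ and $k \in [n]$, chaos coordinates $I_t = (j_t, k)$ for $t \in [q]$, and matrix coordinates $I_{q+1} = (j_1, \ldots, j_q)$ and $I_{q+2} = (k)$. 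The effective cardinality of each independent vector $\vect{h}^{(t)}$ is $dn$, so $\log(d^q + dn) \asymp_q \log(d+n)$, and hence $\alpha(h) \le \norm{h}_{L^{q\log(d+n)}}$ up to $q$-dependent constants.

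The main computation is to determine $\sigma(\AA)$ and $v(\AA)$ from Proposition~\ref{prop:flattenings_combinatorial}, without tabulating every flattening explicitly. For a $\sigma$-flattening, let $J \subseteq [q]$ denote the set of chaos coordinates assigned to $R$. Since $I_{q+1}$ places every $j_t$ into $\RR$, and each $I_t$ puts $k$ into $\RR$ exactly when $t \in J$, one finds $\RR^c = \emptyset$ if $J \ne \emptyset$ and $\RR^c = \{k\}$ otherwise, while $\CC^c = \{j_t : t \in J\}$. Hence $\norm{\flatta{R}{C}}^2$ equals $n$ when $J = \emptyset$ and $d^{|J|}$ when $J \neq \emptyset$, yielding $\sigma(\AA) = \max\{d^{q/2}, n^{1/2}\}$. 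An analogous count for $v$-flattenings (where $J = R \subseteq [q]$ is forced to be nonempty, and $q+1, q+2 \in C$) gives $\CC = \{j_1, \ldots, j_q, k\}$ and $\RR^c = \{j_t : t \in J^c\}$, so $\norm{\flatta{R}{C}}^2 = d^{q-|J|}$, maximized at $|J|=1$ and giving $v(\AA) = d^{(q-1)/2}$.

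With these parameters in hand, Theorem~\ref{thm:iteratedstrongrose} yields
$$\E\norm{Y} \lesssim_q \sigma(\AA) + \alpha(h)^q \log(d+n)^{(q+3)/2} v(\AA),$$
and the standing hypothesis of the theorem to be proved states exactly that the second summand is $o(\sigma(\AA))$. For the matching lower bound I would invoke Theorem~\ref{thm:iteratedrose} together with $r(\AA) \le v(\AA)$ (Lemma~\ref{lem:comparison_v_r}), which gives
$$\sigma(\AA) - C_q\, \alpha(h)^q \log(d+n)^{q/2} v(\AA) \lesssim_q \E\norm{Y},$$
and the same hypothesis makes the subtracted term $o(\sigma(\AA))$. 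The only substantive step is the combinatorial bookkeeping of the second paragraph --- one must verify that among $\sigma$-flattenings the maxima are attained at $J \in \{\emptyset, [q]\}$, and among $v$-flattenings at $|J| = 1$; the rest is direct application of Section~\ref{sec:mainineq}.
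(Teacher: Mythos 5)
Your argument is correct and follows essentially the same route as the paper's proof: you identify the dominant $\sigma$-flattenings (all chaos coordinates in $R$, giving $d^{q/2}$, or all in $C$, giving $n^{1/2}$), bound the remaining flattenings via the product formula of Proposition~\ref{prop:flattenings_combinatorial}, and combine Theorems~\ref{thm:iteratedrose} and~\ref{thm:iteratedstrongrose}. The only cosmetic difference is that the paper bounds the non-dominant $\sigma$-, $r$-, and $v$-flattenings by a single unified observation (that $k$ and some $j_{t'}$ must then both lie in $\RR\cap\CC$), whereas you compute $\sigma$ and $v$ exactly via the parameter $|J|$ and invoke Lemma~\ref{lem:comparison_v_r} for $r$; this is equivalent.
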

\begin{proof}
    For this chaos of combinatorial type, the summation indices are $j_1,\ldots,j_q$ and $k$. We claim that the following two final flattenings are dominant:\footnote{%
    For clarity of exposition, we indicate informally for $I_t$ and $\RR,\CC$ which summation indices appear in them, rather than specifying the label of the summation index as in the formal Definition \ref{def:combinatorialtype}.}
    \begin{enumerate}
        \item the $\sigma$-flattening $\flatta{1:q, q+1}{q+2}$ with all chaos coordinates $I_t = (j_t, k)$ being in $R$, has
        \begin{equation*}
            \RR = \brac{j_1,\ldots,j_q,k},~ \CC = \brac{k} \implies \|\flatta{1:q, q+1}{q+2}\|^2=d^q;
        \end{equation*}
        \item the $\sigma$-flattening $\flatta{q+1}{1:q,q+2}$ with all chaos coordinates $I_t = (j_t, k)$ being in $C$, has
        \begin{equation*}
            \RR = \brac{j_1,\ldots,j_q}, ~\CC = \brac{j_1, \ldots, j_q, k} \implies \|\flatta{q+1}{1:q,q+2}\|^2=n.
        \end{equation*}
    \end{enumerate}
    Indeed, for any other $\sigma$- or $r$-flattening $\flatta{R}{C}$, there are $t, t' \in [q]$ (possibly equal) such that $t \in R$ and $t' \in C$. Hence both summation indices $k$ and $j_{t'}$ appear in $\RR \cap \CC$, and thus $\|\flatta{R}{C}\|^2 \leq d^{q-1}$. Similarly, given an arbitrary $v$-flattening $\flatta{R}{C}$, there must be some $t \in [q] \cap R$ (as $R \neq \emptyset$), so both $k$ and $j_{t}$ appear in $\RR \cap \CC$, and thus $\|\flatta{R}{C}\|^2 \leq d^{q-1}$.

    We have therefore shown that $\sigma(\AA)=\max\{d^{\frac{q}2}, n^{\frac12}\}$ and
    that $v(\AA),r(\AA)\le d^{\frac{q-1}{2}}$. The conclusion now follows readily from
    Theorems~\ref{thm:iteratedrose}~and~\ref{thm:iteratedstrongrose}.
\end{proof}

\begin{remark}
    One of the main contributions of~\cite{Rudelson2011RowPO} is to show that the smallest singular value $s_n(Y)$ is lower bounded up to an absolute constant by $d^{\frac{q}{2}}$ whenever $n \lesssim_{q,s}  \frac{d^{q}}{\log_{(s)}(d)}$, where $\log_{(s)}(\cdot)$ is the iterated logarithm function. As will be shown in the companion paper~\cite{IteratedNCK-Journal}, a variant of our main results for the smallest singular value makes it possible to remove the $\log_{(s)}(\cdot)$ factor.
\end{remark}

\subsection{The sum-of-squares algorithm for tensor PCA}\label{sec:SOS:TensorPCA}

Another important example of a matrix chaos arises in the analysis of a sum-of-squares algorithm for tensor PCA~\cite{Hopkins2015TensorPC,Hopkins2018StatisticalIA}. While graph matrices (see Section~\ref{sec:graphmatrices}) are often used to provide algorithmic lower bounds, the chaos in this section is used to prove upper bounds (i.e., algorithmic guarantees). 

Hopkins and collaborators~\cite{Hopkins2018StatisticalIA} (see also~\cite[Section 6]{Hopkins2015TensorPC}) prove upper bounds on the performance of the sum-of-squares hierarchy for tensor PCA via an upper bound on the norm of $X \coloneqq \sum_{i \in [n]} \brap{W_i \otimes W_i - \E\bras{W_i \otimes W_i}}$, where $W_1, \ldots, W_n$ are i.i.d.\ $d\times d$ matrices with i.i.d.\ standard gaussian entries~(\cite[Theorem B.5]{Hopkins2015TensorPC}~and~\cite[Theorem 6.7.1 and Lemma 6.3.4]{Hopkins2018StatisticalIA}). 
Their bounds are optimal up to a logarithmic factor.
Using the methods of this paper, we can easily remove the spurious logarithmic factor in their bound.

\begin{theorem}\label{theorem:forsosnolog}
    Let $W_1, \ldots, W_n$ be i.i.d.\ $d \times d$ random matrices with i.i.d.\ $N(0,1)$ entries. Then
    \begin{equation*}
        \E\norm{\sum_{i \in [n]} \brap{W_i \otimes W_i - \E\bras{W_i \otimes W_i}}} \lesssim d\sqrt{n},
    \end{equation*}
    provided that $n,d\gtrsim \log(d+n)^4$.
\end{theorem}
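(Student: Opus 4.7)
The plan is to split $X = \sum_i (W_i \otimes W_i - \E[W_i \otimes W_i])$ into a homogeneous square-free chaos of order $q=2$ plus a simple diagonal remainder, and handle each separately. Expanding $W_i \otimes W_i = \sum_{a,b,c,d\in[d]} W_i[a,b]W_i[c,d]\,(e_a \otimes e_c)(e_b \otimes e_d)^\top$ and using $\E[W_i[a,b]W_i[c,d]] = \mathbf{1}_{(a,b)=(c,d)}$, I would write $X = X_1 + X_2$ with
\begin{align*}
    X_1 &= \sum_{i}\sum_{(a,b)\neq(c,d)} W_i[a,b]W_i[c,d]\,(e_a \otimes e_c)(e_b \otimes e_d)^\top,\\
    X_2 &= \sum_{i,a,b}(W_i[a,b]^2 - 1)\,(e_a \otimes e_a)(e_b \otimes e_b)^\top.
\end{align*}

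The diagonal piece $X_2$ I would bound directly by a Frobenius argument: its only nonzero entries lie at the $d^2$ positions $((a,a),(b,b))$, so
$\E\|X_2\|_F^2 = d^2 \cdot \mathrm{Var}\bigl(\sum_i (W_i[a,b]^2-1)\bigr) = 2nd^2$,
and Jensen's inequality with $\|\cdot\| \leq \|\cdot\|_F$ gives $\E\|X_2\| \leq \sqrt{2}\,d\sqrt{n}$.

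The bulk of the work is in bounding $X_1$. By decoupling (Theorem~\ref{thm:decoupling}) it suffices to estimate the norm of the decoupled chaos $Y_1 = \sum_{i,a,b,c,d} h^{(1)}_{i,a,b} h^{(2)}_{i,c,d}\,(e_a \otimes e_c)(e_b \otimes e_d)^\top$. This is a matrix chaos of combinatorial type (Definition~\ref{def:combinatorialtype}) with $p = 5$ summation indices $s_1 = i \in [n]$ and $s_2,s_3,s_4,s_5 = a,b,c,d \in [d]$ (so $S_1 = n$, $S_2 = \cdots = S_5 = d$); chaos coordinates $I_1 = (s_1,s_2,s_3)$ and $I_2 = (s_1,s_4,s_5)$; and matrix coordinates $I_3 = (s_2,s_4)$ and $I_4 = (s_3,s_5)$. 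Proposition~\ref{prop:flattenings_combinatorial} reduces the computation of $\sigma(\AA)$ and $v(\AA)$ to a brief case check. The dominant $\sigma$-flattenings are those that place both chaos labels on the same side, $\{1,2\} \subseteq R$ or $\{1,2\} \subseteq C$, for which either $\RR^c$ or $\CC^c$ is empty while the other has dimension product $nd^2$; the remaining (mixed) $\sigma$-flattenings have norm only $d$. Similarly, for $v$-flattenings the choice $\{1,2\} \subseteq R$ (so $C = \{3,4\}$) leaves $s_1$ unassigned and gives norm$^2 = n$, while placing exactly one chaos label in $R$ gives norm$^2 = d^2$. Hence $\sigma(\AA) = d\sqrt n$ and $v(\AA) = \max\{d,\sqrt n\}$.

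Finally, the iterated strong NCK inequality (Theorem~\ref{thm:iteratedfreenck}) with $q=2$, $\|h\|_{\psi_2} = O(1)$, and $\log(d^2 + nd^2) \asymp \log(d+n)$ gives
\begin{equation*}
    \E\|Y_1\| \lesssim d\sqrt n + \log(d+n)^{2}\,\max\{d,\sqrt n\},
\end{equation*}
and under the hypothesis $n,d \gtrsim \log(d+n)^4$ the second term is dominated by $d\sqrt n$, completing the bound on $X_1$ and hence on $X$. The only non-mechanical step is correctly enumerating the flattening structure, which Proposition~\ref{prop:flattenings_combinatorial} reduces to bookkeeping on the sets $\RR, \CC$; the essential point is that invoking the \emph{strong} NCK bound in place of the classical NCK bound is precisely what removes the spurious logarithmic factor present in the analysis of~\cite{Hopkins2018StatisticalIA}.
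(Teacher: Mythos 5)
Your proposal is correct, and for the dominant term it follows essentially the same route as the paper: the same split into a diagonal and an off-diagonal piece, the same identification of the decoupled off-diagonal piece as an order-$2$ chaos in the combinatorial framework with $\sigma(\AA)=d\sqrt{n}$ and $v(\AA)=d\vee\sqrt{n}$ (your case analysis of the flattenings matches Table~\ref{tab:sos_tensor_PCA_second}), and the same application of the iterated strong NCK inequality (Theorem~\ref{thm:iteratedfreenck}). Where you genuinely differ is the diagonal piece: the paper treats it as an order-$1$ chaos of combinatorial type in the variables $g_{i,j,k}^2-1$ and invokes the iterated matrix Rosenthal inequality (Theorem~\ref{thm:iteratedrose}), obtaining $\log(d+nd^2)^{1/2}\sqrt{dn}+\log(d+nd^2)^2$, whereas you bound it by $\E\|X_2\|\le(\E\|X_2\|_F^2)^{1/2}=\sqrt{2nd^2}$ directly, using that the matrix has exactly $d^2$ nonzero entries, each a centered sum of $n$ independent variance-$2$ terms. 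Your Frobenius argument is correct, simpler, requires no assumption on $n,d$ for that piece, and in fact improves on the paper's bound for it by a $\sqrt{\log}$ factor. One small imprecision worth fixing: the decoupled version of the off-diagonal chaos retains the weight $\1_{(a,b)\neq(c,d)}$ (Theorem~\ref{thm:decoupling} sets the coefficients to zero on the diagonal), so it is a chaos of \emph{nearly} combinatorial type in the sense of Definition~\ref{def:combinatorialtypekernel} rather than exactly combinatorial as you state; this is harmless, since Proposition~\ref{prop:flattenings_nearly_combinatorial} with $\|f\|_\infty=1$ yields exactly the flattening bounds you computed, but the indicator should not be silently dropped.
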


Using Theorem~\ref{theorem:forsosnolog}, it is straightforward to remove the logarithmic factor in the sum-of-squares algorithmic guarantee of~\cite{Hopkins2015TensorPC,Hopkins2018StatisticalIA}. Let us note that the regime of interest in this application is $d^{\tau_{-}}\leq n\leq d^{\tau_{+}}$ for fixed $0<\tau_{-}<\tau_{+}$, so that the assumption on $n,d$ is automatically satisfied.

\begin{proof}[Proof of Theorem~\ref{theorem:forsosnolog}]
    Let $g_{i,j,k} = W_i[j,k]$. Note that $X$ naturally decomposes as
    \begin{equation*}
        X = \sum_{\substack{i \in [n]\\ j, k \in [d]}} \brap{g_{i,j,k}^2 - 1} e_{(j,j)} \otimes e_{(k,k)}^\top + \sum_{\substack{i \in [n]\\j_1, k_1, j_2, k_2 \in [d]}} \1_{(j_1, k_1) \neq (j_2, k_2)}\, g_{i,j_1,k_1} g_{i,j_2,k_2}\, e_{(j_1,j_2)} \otimes e_{(k_1,k_2)}^\top,
    \end{equation*}
    and denote by $X_1$ and $X_2$ the two terms on the right-hand side.
    \begin{enumerate}
        \item After reordering rows and columns, and using $e_j \otimes e_j \simeq e_j$, we can express $X_1$ as 
        \begin{equation*}
            Y'_1 \coloneqq \sum_{\substack{i \in [n]\\ j, k \in [d]}} \brap{g_{i,j,k}^2 - 1} e_{j} \otimes e_{k}^\top.
        \end{equation*}
        This is a chaos of combinatorial type with $h_{i,j,k}=g_{i,j,k}^2 - 1$, for which Algorithm~\ref{alg:buildthetable} outputs Table~\ref{tab:sos_tensor_PCA_first}.
        As $\alpha(h)\lesssim \log(d+nd^2)$ for $p=\log(d+nd^2)$, Theorem \ref{thm:iteratedrose} yields
        \begin{equation}\label{eq:sos_firstbound}
            \E\norm{Y'_1} \lesssim \log(d+nd^2)^{\frac{1}{2}}\sqrt{dn} + \log(d + nd^2)^2.
        \end{equation}
        \item After decoupling, $X_2$ corresponds to the chaos
        \begin{equation*}
            Y_2 \coloneqq \sum_{\substack{i \in [n]\\j_1, k_1, j_2, k_2 \in [d]}} \underbrace{\1_{(j_1, k_1) \neq (j_2, k_2)}}_{\text{weight function }f}\, g_{i,j_1,k_1}^{(1)} g_{i,j_2,k_2}^{(2)}\, e_{(j_1,j_2)} \otimes e_{(k_1,k_2)}^\top.
        \end{equation*}
        This is a chaos of nearly combinatorial type. We can therefore use Proposition~\ref{prop:flattenings_nearly_combinatorial} to to upper bound the parameters by the output of Algorithm~\ref{alg:buildthetable}, which is given in Table~\ref{tab:sos_tensor_PCA_second}. The iterated strong NCK inequality (Theorem~\ref{thm:iteratedfreenck}) yields
        \begin{equation}\label{eq:sos_secondbound}
            \E\norm{Y_2} \lesssim d\sqrt{n} + \log(d^2+nd^2)^{2} \brap{d \lor \sqrt{n}}.
        \end{equation}
    \end{enumerate}
    Combining~\eqref{eq:sos_firstbound} and~\eqref{eq:sos_secondbound} gives the desired bound.\qedhere
\end{proof}

\begin{table}
    \centering
    \begin{tabular}{| c | c : c c | c c c | c |}
        \hline
        & \multicolumn{3}{c|}{coordinates} & \multicolumn{3}{c|}{summation} & \\
        type & \multicolumn{1}{c:}{chaos} & \multicolumn{2}{c|}{matrix} & \multicolumn{3}{c|}{indices} & $\text{norm}^2$ \\
        & $ijk$ & $j$ & $k$ & $i$ & $j$ & $k$ & \\
        \hline
        \hline
        \multirow{2}{*}{$\sigma$}
        & \myceco \myR & \myceco \myR & \myceco \myC &  \myceco \myR & \myceco \myR & \myceco \myRC  & \myceco $n d$ \\
        & \myceco \myC &  \myceco \myR & \myceco \myC &  \myceco \myC &  \myceco \myRC  & \myceco \myC &  \myceco $n d$ \\
        \hline
        \multirow{1}{*}{$r$}
        & \myRC & \myR &  \myC &  \myRC & \myRC  & \myRC  & $1$ \\
        \hline
    \end{tabular}
    \caption{Flattenings of $\AA'_1$: $\sigma(\AA'_1) = \sqrt{nd}$, $r(\AA'_1) = 1$, used in~\eqref{eq:sos_firstbound}.}
    \label{tab:sos_tensor_PCA_first}
\end{table}

\begin{table}
    \centering
    \begin{tabular}{| c | c c : c c | c c c c c | c |}
        \hline
        & \multicolumn{4}{c|}{coordinates} & \multicolumn{5}{c|}{summation} & \\
        type & \multicolumn{2}{c:}{chaos} & \multicolumn{2}{c|}{matrix} & \multicolumn{5}{c|}{indices} & $\text{norm}^2$ \\
        & $ij_1k_1$ & $ij_2k_2$ & $j_1j_2$ & $k_1k_2$ & $i$ & $j_1$ & $j_2$ & $k_1$ & $k_2$ & \\
        \hline
        \hline
        \multirow{4}{*}{$\sigma$}
        & \myceco \myR & \myceco \myR & \myceco \myR & \myceco \myC &  \myceco \myR & \myceco \myR & \myceco \myR & \myceco \myRC  & \myceco \myRC  & \myceco $n d^2$ \\
        & \myR & \myC &  \myR & \myC &  \myRC  & \myR & \myRC  & \myRC  & \myC &  $d^2$ \\
        & \myC &  \myR & \myR & \myC &  \myRC  & \myRC  & \myR & \myC &  \myRC  & $d^2$ \\
        & \myceco \myC &  \myceco \myC &  \myceco \myR & \myceco \myC &  \myceco \myC &  \myceco \myRC  & \myceco \myRC  & \myceco \myC &  \myceco \myC &  \myceco $n d^2$ \\
        \hline
        \multirow{3}{*}{$v$}
        & \myR & \myR & \myC &  \myC &  \myR & \myRC  & \myRC  & \myRC  & \myRC  & $n$ \\
        & \myR & \myC &  \myC &  \myC &  \myRC  & \myRC  & \myC &  \myRC  & \myC &  $d^2$ \\
        & \myC &  \myR & \myC &  \myC &  \myRC  & \myC &  \myRC  & \myC &  \myRC  & $d^2$ \\
        \hline
    \end{tabular}
    \caption{Flattenings of $\AA_2$: $\sigma(\AA_2) = d\sqrt{n}$, $v(\AA_2) = d \lor \sqrt{n}$, used in~\eqref{eq:sos_secondbound}.}
    \label{tab:sos_tensor_PCA_second}
\end{table}

We note that the same random matrix, and an analogous bound, also appears in work on quantum expanders~\cite[Theorem 1]{Lancien2023ANO}. Our aim here is to illustrate that we readily recover the correct bound by a mechanical application of matrix chaos inequalities.

\subsection{Graph matrices}\label{sec:graphmatrices}

The standard framework~\cite{Potechin2020MachineryFP} for obtaining algorithmic lower bounds in the sum-of-squares hierarchy is to construct a candidate pseudo-expectation, and to show that its moment matrix is positive semidefinite. When providing lower bounds for average case instances, a now standard way to construct candidate pseudo-expectation matrices is through matrix chaoses. A major challenge in this area has been that most classical random matrix inequalities were not able to analyze the spectrum of these chaoses. 

This bottleneck was resolved~\cite{Meka2015SumofsquaresLB,Barak2016ANT} by the development of a theory of the so-called \emph{graph matrices}~\cite{Medarametla2016BoundsOT,ahn2016graph}. 
One can think of these as a natural basis in which moment matrices (at least those that possess ``enough symmetry'') can be expressed. For any graph matrix, norm bounds are known~\cite{ahn2016graph} (see section \ref{sec:graphnormbounds} below) which in certain cases translate to bounds for moment matrices. This approach is used in showing several of the state of the art lower bounds for average case complexity in the sum-of-squares hierarchy, see~\cite{Potechin2020MachineryFP,ahn2016graph}.

\subsubsection{Definition}
Graph matrices are random matrices that depend on an \emph{input distribution} of ${\binom{n}{2}}$ i.i.d.\ Rademacher random variables (each corresponding to an edge of a complete graph on $n$ nodes), and a small sized graph $\alpha$ called a \emph{shape}, with identified subsets $U_\alpha, V_\alpha \subseteq V(\alpha)$ of, respectively, left and right vertices.
The shape will be fixed, while $n$ is best thought of as arbitrarily large (in other words, we will not aim to optimize the dependency of our bonds on constants depending on $\alpha$).

\begin{definition}[Shape]\label{def:shape}
    A \emph{shape} is a graph, that has a subset $U_\alpha \subseteq V(\alpha)$ of \emph{left vertices}, and another subset $V_\alpha \subseteq V(\alpha)$ of \emph{right vertices}.\footnote{The sets $U_\alpha$ and $V_\alpha$ can intersect, their union is not necessarily $V(\alpha)$, and their sizes are not necessarily equal.}
\end{definition}

\begin{definition}[Graph matrices]\label{def:graphmatrices}
    Let $\alpha$ be a shape and $n$ be a large integer.
    \begin{enumerate}
        \item The set of \emph{middle vertices} is given by $W_\alpha = V(\alpha) \setminus (U_\alpha \cup V_\alpha)$.

        \item The \emph{ground set} is the set of indices $[n] = \brac{1, \ldots, n}$, which we also interpret as the vertices of the complete graph $K_n$.
        
        \item The \emph{input distribution} $\vect{\eps} = \brap{\eps_e}_{e \in E(K_n)}$ is a collection of i.i.d. Rademachers indexed by the edges of $K_n$ (i.e. unordered pairs of distinct numbers).

        \item A \emph{realization} is any injective map $\varphi \colon\! V(\alpha) \to [n]$ from the shape vertices to the ground set.
        
        \item The \emph{graph matrix} $M_\alpha$ is the $n^{\abs{U_\alpha}} \times n^{\abs{V_\alpha}}$ random matrix, whose rows and columns are indexed by ordered subsets of $[n]$ with cardinality $\abs{U_\alpha}$ and $\abs{V_\alpha}$, respectively, given by
        \begin{equation}\label{eq:graphmatrixdef}
            M_\alpha \coloneqq \sum_{\text{realization } \varphi} \brap{\prod_{(i,j) \in E(\alpha)} \eps_{\varphi(i),\varphi(j)}} e_{\varphi(U_\alpha)}\otimes e_{\varphi(V_\alpha)}^\top.
        \end{equation}
    \end{enumerate}
\end{definition}

\begin{remark}[Identification in notation]\label{rmk:identification}
    In the discussion that follows, we will treat graph matrices within the framework developed in Section~\ref{sec:3}. Observe that summing over realizations in~\eqref{eq:graphmatrixdef} corresponds to different choices for summation indices in~\eqref{eq:chaoscombinatorialtype}. Thus we will, in a slight abuse of notation, use the same symbols to denote vertices from $V(\alpha)$ and summation indices.
\end{remark}

\begin{example}[Examples of graph matrices]\label{ex:graphmatrices}
    These examples are also represented in Figure~\ref{fig:graphmatrices}.
    \begin{enumerate}
    \itemsep\medskipamount
        \item (Wigner without a diagonal) If $U_\beta = \brac{i}, V_\beta = \brac{j}$, $W_\beta = \emptyset$, $E(\beta) = \brac{(i,j)}$, then
        \begin{equation}\label{eq:gmwigner}
            M_\beta = \sum_{i \neq j} \eps_{i,j} \, e_i \otimes e_j^\top
        \end{equation}
        is an $n \times n$ Wigner matrix with zeros on the diagonal.
        
        \item (Z--shaped graph matrix) If $U_\gamma = \brac{i, j}, V_\gamma = \brac{k, l}$, $W_\gamma = \brac{\emptyset}$, $E(\gamma) = \brac{(i,k),(j,k),(j,l)}$, 
        \begin{equation*}
            M_\gamma = \sum_{i, j, k, l\text{ distinct}} \eps_{i,k}\eps_{j,k}\eps_{j,l} \, e_{(i,j)} \otimes e_{(k,l)}^\top
        \end{equation*}
        is an $n^2 \times n^2$ asymmetric matrix that was studied in the context of free probability~\cite{Cai2022OnMD}.
        
        \item (Example of a graph matrix with middle vertices) If $U_\delta = \brac{i, j}, V_\delta = \brac{k, l}$, $W_\delta = \brac{m, o}$, $E(\delta) = \brac{(i,m),(j,m),(k,m),(l,m)}$, then
        \begin{equation*}
            M_\delta = \sum_{i, j, k, l, m, o\text{ distinct}} \eps_{i,m}\eps_{j,m}\eps_{k,m}\eps_{l,m} \, e_{(i,j)} \otimes e_{(k,l)}^\top
        \end{equation*}
        is an $n^2 \times n^2$ symmetric matrix. Note that $\abs{W_\delta}$ has no effect on the dimension of $M_\delta$.
    \end{enumerate}
\end{example}

\subsubsection{Norm bounds}\label{sec:graphnormbounds}

We are ready to state a general bound on the norm of graph matrices.
Recall that a set of vertices $S$ is a $U$---\,$V$ vertex separator if all paths from $U$ to $V$ pass through $S$.

\begin{theorem}[Graph matrix norm bounds]\label{thm:gmnormbound}
    Given a shape $\alpha$, let $M_\alpha$ be the associated graph matrix as in~\eqref{eq:graphmatrixdef}. Then we have
    \begin{equation}\label{eq:gmbound}
        n^{\frac{1}{2}\brap{\abs{V(\alpha)}-\abs{\smin}+\abs{\wiso}}}
        \lesssim_{\alpha} \E\norm{M_\alpha}
        \lesssim_{\alpha} n^{\frac{1}{2}\brap{\abs{V(\alpha)}-\abs{\smin}+\abs{\wiso}}} \cdot (\log n)^{\frac{1}{2}f(\alpha)},
    \end{equation}
    where $f(\alpha) = \abs{\smin} - \abs{U_\alpha \cap V_\alpha} + \abs{W_\alpha} - \abs{\wiso}$. Here $\abs{\smin}$ is the size of the minimal $U_\alpha$---\,$V_\alpha$ vertex separator and $\wiso$ is the set of all isolated middle vertices.
\end{theorem}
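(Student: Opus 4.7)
The plan is to view $M_\alpha$ (after decoupling via Theorem~\ref{thm:decoupling}) as a matrix chaos of nearly combinatorial type in the sense of Definition~\ref{def:combinatorialtypekernel}: the $p = |V(\alpha)|$ summation indices are the vertices of $\alpha$ (each taking values in $[n]$); the $q = |E(\alpha)|$ chaos coordinates $I_1,\dots,I_q$ pick out the two endpoints of each edge; the matrix coordinates are $I_{q+1} = U_\alpha$ and $I_{q+2} = V_\alpha$; and the weight function is $f(\varphi) = \indi{\varphi\text{ injective}}$. Before invoking Algorithm~\ref{alg:buildthetable}, I would first peel off the isolated middle vertices: since each $\varphi(w)$ for $w \in \wiso$ can be chosen freely in $[n]$ subject only to being distinct from the other values, a direct count yields $M_\alpha = N_\wiso \cdot M_{\alpha'}$ with $N_\wiso \asymp_\alpha n^{|\wiso|}$ and $\alpha' = \alpha \setminus \wiso$ a shape with no isolated middle vertices. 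It therefore suffices to bound $\E\norm{M_{\alpha'}}$; throughout the rest of the proof, every summation index appears in some coordinate, so $\RR \cup \CC = V(\alpha')$ for every flattening.

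For the reduced chaos, Proposition~\ref{prop:flattenings_nearly_combinatorial} yields $\norm{\flatta{R}{C}^f}^2 \leq n^{|V(\alpha')| - |\RR \cap \CC|}$. The key structural input is that for any $\sigma$-flattening, $\RR \cap \CC$ is a $U_\alpha$--$V_\alpha$ vertex separator in $\alpha'$. To see this, colour each vertex red if it lies in $\RR$ and blue if it lies in $\CC$. Since $q+1 \in R$ and $q+2 \in C$, every vertex of $U_\alpha$ is red and every vertex of $V_\alpha$ is blue; since each chaos coordinate lies in exactly one of $R$ or $C$, every edge assigned to $R$ (resp.\ $C$) paints both of its endpoints red (resp.\ blue). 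Along any path from $U_\alpha$ to $V_\alpha$, a single edge cannot connect a purely red vertex to a purely blue one, so some vertex on the path must carry both colours, i.e., lie in $\RR \cap \CC$. Hence $|\RR \cap \CC| \geq |\smin|$, and this bound is sharp: colouring edges on the $U_\alpha$-side of a minimum separator $S$ red and those on the $V_\alpha$-side blue (with edges inside $S$ assigned arbitrarily) yields $\RR \cap \CC = S$, using that minimality of $S$ forces every $v \in S$ to have neighbours on both sides. Consequently $\sigma(\AA) \asymp_\alpha n^{(|V(\alpha')| - |\smin|)/2}$, and combined with the $N_\wiso$ factor this delivers the leading-order term $n^{(|V(\alpha)| - |\smin| + |\wiso|)/2}$ both for the lower bound (via the lower inequality in Theorem~\ref{thm:iteratednck}, using $\|h\|_{L^1} = 1$ for Rademachers) and for the upper bound.

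For the upper bound I would then apply iterated strong NCK (Theorem~\ref{thm:iteratedfreenck}), obtaining $\E\norm{M_{\alpha'}} \lesssim_\alpha \sigma(\AA) + (\log n)^{(q+2)/2} v(\AA)$. Running Algorithm~\ref{alg:buildthetable} a second time, the $v$-flattening that minimises $|\RR \cap \CC|$ is obtained by placing every chaos coordinate in $R$, which forces $\RR \cap \CC = U_\alpha \cup V_\alpha$ and hence $v(\AA) \asymp_\alpha n^{(|V(\alpha')| - |U_\alpha \cup V_\alpha|)/2}$. Whenever $|\smin| < |U_\alpha \cup V_\alpha|$, the $v$-term is polynomially smaller than $\sigma(\AA)$, and the resulting bound already matches the lower bound up to constants --- strictly sharper than the stated inequality. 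The main obstacle lies in the borderline regime $|\smin| = |U_\alpha \cup V_\alpha|$, where a blackbox application of Theorem~\ref{thm:iteratedfreenck} produces the exponent $(|E(\alpha)|+2)/2$ rather than the sharper $f(\alpha)/2 = \tfrac12(|\smin| - |U_\alpha \cap V_\alpha| + |W| - |\wiso|)$. Recovering the precise exponent requires iterating the strong NCK layer by layer as in Section~\ref{subsec:iterapproach}, carefully choosing at each layer between the $\sigma$-branch (no logarithmic cost) and the $v$-branch (one $\sqrt{\log n}$) according to the local structure of $\alpha'$. Matching this bookkeeping to the combinatorial structure of $\alpha'$ --- paying a log factor once per non-isolated middle vertex and once per separator vertex outside $U_\alpha \cap V_\alpha$ --- is the main technical step and would be carried out by induction on the shape.
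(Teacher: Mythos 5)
Your proposal captures the central combinatorial idea (flattenings as bipartitions of edges, and $\RR\cap\CC$ being a $U_\alpha$--$V_\alpha$ vertex separator), but there are three genuine gaps relative to a complete proof, two of which concern the upper bound that the paper actually proves.

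First, $M_\alpha$ is \emph{not} of nearly combinatorial type as you have set it up, because the input distribution $\vect{\eps}$ is indexed by \emph{unordered} pairs: $\eps_{\varphi(i),\varphi(j)}\equiv\eps_{\varphi(j),\varphi(i)}$, whereas Definition~\ref{def:combinatorialtype} requires the chaos variables $h^{(t)}_{(a,b)}$ indexed by ordered tuples to be mutually independent. The weight $\indi{\varphi\text{ injective}}$ does not resolve this. The paper's proof therefore first decomposes $M_\alpha = \sum_{E\subseteq E(\alpha)} M_{\alpha,E}$, where the summand $M_{\alpha,E}$ carries an extra indicator $\prod_{(i,j)\in E}\1_{\varphi(i)>\varphi(j)}\prod_{(i,j)\in E(\alpha)\setminus E}\1_{\varphi(i)<\varphi(j)}$ that fixes the orientation of every edge; only after this split (and decoupling) is each piece genuinely of nearly combinatorial type, at the cost of an $\alpha$-dependent constant $2^{|E(\alpha)|}$. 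Without this step, Proposition~\ref{prop:flattenings_nearly_combinatorial} and Algorithm~\ref{alg:buildthetable} do not apply.

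Second, your route to the $(\log n)^{f(\alpha)/2}$ exponent is speculative and differs from the mechanism that actually works. You propose to iterate the \emph{strong} NCK and ``choose between the $\sigma$-branch and the $v$-branch'' layer by layer, but that is not how Theorem~\ref{thm:iteratedfreenck} is structured (the $\log$ power there is $(q+2)/2$ in the $v$-term, not one per layer), and you do not carry the idea out. The paper instead uses the \emph{partially iterated} (plain) NCK inequality~\eqref{eq:stoppednck}: it orders the edges so that the last $k = k_1 + k_2$ of them consist of all edges in a Menger family of $|\smin|$ vertex-disjoint $U_\alpha$--$V_\alpha$ paths, plus $k_2$ extra edges covering the remaining non-isolated middle vertices, and verifies $k\le f(\alpha)$; it then iterates NCK only $k$ times (incurring $\log^{k/2}$), and bounds the resulting \emph{intermediate} flattening of chaos order $q-k$ \emph{entrywise} using boundedness of the Rademachers (Remark~\ref{rmk:interflatt_nearcombinatorial}). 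The crucial point is that after those $k$ iterations the surviving $\RR\cap\CC$ already separates $U_\alpha$ from $V_\alpha$ and $\RR^c\cap\CC^c\subseteq\wiso$, even though most chaos coordinates are still unassigned. This ``stop early and bound entrywise'' trick is what your proposal is missing; the strong-NCK accounting you sketch would, in the regime $|\smin| = |U_\alpha\cup V_\alpha|$, require a genuinely new inductive argument that you have not supplied.

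Third, the lower bound is not actually established here. Proposition~\ref{prop:flattenings_nearly_combinatorial} only gives an \emph{upper} bound on flattening norms; your claim $\sigma(\AA)\asymp_\alpha n^{(|V(\alpha')|-|\smin|)/2}$ requires a matching lower bound on $\|\flatta{R}{C}^f\|$ in the presence of the injectivity weight, which you do not prove. The paper in fact defers the lower bound to the companion manuscript (Remark~\ref{remark:lowerboundgraphmatrices}), and the argument sketched there (restrict $\vect\eps$ to a $V(\alpha)$-partite subgraph so the restricted chaos is \emph{exactly} of combinatorial type) is different from and cleaner than your approach, precisely because it avoids having to lower-bound a weighted flattening. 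Your preliminary factorization $M_\alpha = N_\wiso M_{\alpha'}$ is correct and a reasonable simplification (the paper handles $\wiso$ through the $|\RR^c\cap\CC^c|\le|\wiso|$ inequality instead), and your observation that strong NCK already removes all logs when $|\smin|<|U_\alpha\cup V_\alpha|$ is a nice remark not emphasized in the paper.
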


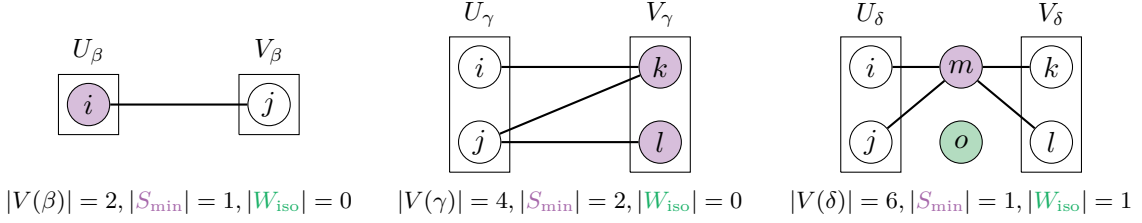
\begin{figure}[t]
    \centering
    \begin{tikzpicture}
        \def\xoffeset{1.2};
        \def\voffeset{0.5};
        \def\vequation{-1.3};
        \def\rsep{0.4};
        
        \def\x{-5.2};
        \myrectangle{\x-\xoffeset}{0.4}{0}{0.4}{\small$U_\beta$}
        \myrectangle{\x+\xoffeset}{0.4}{0}{0.4}{\small$V_\beta$}
        \mynode{\x-\xoffeset}{0}{i1}{$i$}{fill=\mypurple}
        \mynode{\x+\xoffeset}{0}{j1}{$j$}{}
        \myedge{i1}{j1}
        \mygmbound{\x}{\vequation}{\beta}{2}{1}{0}

        \def\x{0};
        \myrectangle{\x-\xoffeset}{0.4}{0}{0.9}{\small$U_\gamma$}
        \myrectangle{\x+\xoffeset}{0.4}{0}{0.9}{\small$V_\gamma$}
        \mynode{\x-\xoffeset}{\voffeset}{i2}{$i$}{}
        \mynode{\x-\xoffeset}{-\voffeset}{j2}{$j$}{}
        \mynode{\x+\xoffeset}{\voffeset}{k2}{$k$}{fill=\mypurple}
        \mynode{\x+\xoffeset}{-\voffeset}{l2}{$l$}{fill=\mypurple}
        \myedge{i2}{k2}
        \myedge{j2}{k2}
        \myedge{j2}{l2}
        \mygmbound{\x}{\vequation}{\gamma}{4}{2}{0}
        
        \def\x{5.2};
        \myrectangle{\x-\xoffeset}{0.4}{0}{0.9}{\small$U_\delta$}
        \myrectangle{\x+\xoffeset}{0.4}{0}{0.9}{\small$V_\delta$}
        \mynode{\x-\xoffeset}{\voffeset}{i3}{$i$}{}
        \mynode{\x-\xoffeset}{-\voffeset}{j3}{$j$}{}
        \mynode{\x+\xoffeset}{\voffeset}{k3}{$k$}{}
        \mynode{\x+\xoffeset}{-\voffeset}{l3}{$l$}{}
        \mynode{\x}{\voffeset}{m3}{$m$}{fill=\mypurple}
        \mynode{\x}{-\voffeset}{o3}{$o$}{fill=\mygreen}
        \myedge{i3}{m3}
        \myedge{j3}{m3}
        \myedge{k3}{m3}
        \myedge{l3}{m3}
        \mygmbound{\x}{\vequation}{\delta}{6}{1}{1}
    \end{tikzpicture}
    \caption{Using Theorem~\ref{thm:gmnormbound} on graph matrices from Example~\ref{ex:graphmatrices} yields the following bounds (logarithmic factors omited): $\norm{M_\beta} \approx \sqrt{n}$, $\norm{M_\gamma} \approx n$, $\norm{M_\delta} \approx n^3$.}
    \label{fig:graphmatrices}
\end{figure}

The upper bound in Theorem~\ref{thm:gmnormbound} first appeared in~\cite{ahn2016graph,Medarametla2016BoundsOT} where an intricate moment method argument is used. The minimal vertex separator $\smin$ appears indirectly as a consequence of duality between max-flow and min-cut. A lower bound was shown there for most shapes. More recently, a similar upper bound was obtained in \cite{rajendran2023concentration} by analyzing matrices of partial derivatives that arise by iterating Efron-Stein inequalities. In the case of Rademachers, these matrices are deterministic (and 
coincide with the flattenings discussed here), and $\smin$ naturally arises in computations of Frobenius norms. This provides a more direct proof of the upper bound, but with a larger power of the logarithm: an edge quantity $\abs{E(\alpha)}$ replaces the vertex quantity $f(\alpha)$.

Our tools allow a direct proof of Theorem~\ref{thm:gmnormbound} with the $f(\alpha)$ logarithmic power and provide a lower bound for all shapes.
In this manuscript we provide a proof of the upper bound, whereas a proof of the lower bound is deferred to~\cite{IteratedNCK-Journal} (see Remark~\ref{remark:lowerboundgraphmatrices}).

It should be emphasized, however, that the proof of Theorem~\ref{thm:gmnormbound} only uses the simplest iterated NCK inequality to achieve universal bounds. The main benefit of our framework is that it provides an effortless way to remove logarithmic factors in instances where $v$-flattenings are negligible, by using instead the iterated strong inequalities.
The latter provides a systematic method for achieving improved bounds for graph matrices, as will be illustrated in
Section~\ref{subsec:ellifit} below.

\subsubsection{Flattenings of graph matrices}

One small obstacle to directly proving Theorem~\ref{thm:gmnormbound} using Proposition~\ref{prop:flattenings_nearly_combinatorial} is the fact that the input distribution is indexed by edges, not by ordered pairs---in other words $\eps_{\varphi(i),\varphi(j)}\equiv\eps_{\varphi(j),\varphi(i)}$. This obstacle is already present when trying to upper bound the spectral norm in the example of $M_\beta$ defined in~\eqref{eq:gmwigner}. However, if we decompose
\begin{equation*}
    M_\beta = \sum_{i, j} \eps_{i,j} \,e_i\otimes e_j^\top = \sum_{i, j} \1_{i < j}\, \eps_{i,j} \,e_i\otimes e_j^\top + \sum_{i, j} \1_{i > j}\, \eps_{i,j} \,e_i\otimes e_j^\top,
\end{equation*}
then each summand is a chaos of nearly combinatorial type, and its parameters can be analyzed with Proposition~\ref{prop:flattenings_nearly_combinatorial}.
We will apply a similar idea in the general setting.

\begin{proof}[Proof of Theorem~\ref{thm:gmnormbound}: upper bound with $(\log n)^{\frac{1}{2}E(\alpha)}$ factor]
    Given a graph matrix $M_\alpha$, we have
    \begin{align*}
        M_\alpha
        &= \sum_{\text{realization } \varphi} \brap{\prod_{(i,j) \in E(\alpha)} \eps_{\varphi(i),\varphi(j)}} e_{\varphi(U_\alpha)}\otimes e_{\varphi(V_\alpha)}^\top\\
        &= \sum_{E\subseteq E(\alpha)}\sum_{\varphi} 
        \brap{\prod_{(i,j) \in E}\1_{\varphi(i) > \varphi(j)}
        \prod_{(i,j) \in E(\alpha)\backslash E}\1_{\varphi(i) < \varphi(j)}}
        \brap{\prod_{(i,j) \in E(\alpha)} \eps_{\varphi(i),\varphi(j)}}
        e_{\varphi(U_\alpha)}\otimes e_{\varphi(V_\alpha)}^\top.
    \end{align*}
    The summand $M_{\alpha, E}$ associated to each (possibly empty) subset of edges $E\subseteq E(\alpha)$ is, after decoupling, is a chaos of nearly combinatorial type (Definition~\ref{def:combinatorialtypekernel}). Each chaos has  
    \begin{itemize}
        \item $p = \abs{V(\alpha)}$ summation indices $(s_v)_{v\in V(\alpha)}$ (which correspond to $s_v := \varphi(v)$, see Remark~\ref{rmk:identification});
        \item $q = \abs{E(\alpha)}$ chaos coordinates, which correspond to shape edges:
        \begin{equation*}
            I_e(\s) = \brap{s_u, s_v} \text{ for } e=(u,v) \in E(\alpha);
        \end{equation*}
        \item matrix coordinates given by
        \begin{equation*}
            I_{q+1}(\s) = (s_u)_{u \in U_\alpha}, \qquad I_{q+2}(\s) = (s_u)_{u \in V_\alpha};
        \end{equation*}
        \item a weight function whose $\ell_\infty$ norm is $1$.
    \end{itemize}
    Consider any final $\sigma$-flattening $\flatta{R}{C}$ of $M_{\alpha,E}$. Then the  formula~\eqref{eq:nearly_combinatorial_flattening_formula} yields
    \begin{equation*}
        \norm{\flatta{R}{C}} \leq n^{\frac{1}{2}\abs{\RR^c}} n^{\frac{1}{2}\abs{\CC^c}} = n^{\frac{1}{2}\brap{\abs{V(\alpha)} - \abs{\RR \cap \CC} + \abs{\RR^c \cap \CC^c}}}.
    \end{equation*}
    The following two key inequalities explain the polynomial power in~\eqref{eq:gmbound}:
    \begin{enumerate}
    \itemsep\medskipamount
        \item $\abs{\RR \cap \CC} \geq \abs{\smin}$ holds as vertices in $\RR \cap \CC$ form a vertex separator between $U_\alpha$ and $V_\alpha$: indeed, any path in $\alpha$ that starts in $U_\alpha \subseteq \RR$ and ends in $V_\alpha \subseteq \CC$ has a vertex in $\RR \cap \CC$. The equality $\abs{\RR \cap \CC} = \abs{\smin}$ is attained whenever $R$ consists precisely of all edges that are accessible from $U_\alpha$ without passing through $\smin$.
        
        \item $\abs{\RR^c \cap \CC^c} \leq \abs{\wiso}$ holds as summation indices that do not appear in $\RR$ nor $\CC$ must correspond to isolated middle vertices, as they do not have an incident edge (in $I_e$ for some $e\in E(\alpha)$) and do not appear on the left or right sides of the shape (in $I_{q+1}$ or $I_{q+2}$).
    \end{enumerate}
    Thus
    \begin{equation}\label{eq:gmsigmaparameter}
        \sigma(\AA) \leq n^{\frac{1}{2}\brap{\abs{V(\alpha)} - \abs{\smin} + \abs{\wiso}}},
    \end{equation}
    and an upper bound as in~\eqref{eq:gmbound} with the multiplicative factor $\log(n)^{\frac{1}{2}\abs{E(\alpha)}}$ follows by using the iterated NCK inequality (Theorem~\ref{thm:iteratednck}) and the triangle inequality over all $2^q$ choices of $E$.
\end{proof}

\subsubsection{Intermediate flattenings of graph matrices}

We now focus our attention on improving the logarithmic factor. 
Recall from Section~\ref{sec:iteration} that iterating the NCK inequality yields a bound on the norm of a matrix chaos in terms of its intermediate flattenings. More precisely, 
after performing $k \le q$ iterations of the NCK inequality, one obtains a \emph{partially iterated NCK inequality}:
\begin{equation}\label{eq:stoppednck}
    \E\norm{Y} \qlesssim \log(d+m)^{\frac{k}{2}} 
    \max_{R' \sqcup C' = \brac{q-k+1, \ldots, q}} \E\norm{\interflatty{1:q-k}{R'\cup\brac{q+1}}{C'\cup\brac{q+2}}}.
\end{equation}
When $k=q$, this reduces to the iterated NCK inequality of Theorem \ref{thm:iteratednck}.

In the present setting, however, it will be useful to apply this bound with $k<q$.
The reason is that when the random variables $\boldsymbol{h}^{(t)}$ in the matrix chaos are uniformly bounded (as is the case for the Rademacher variables that appear here), we can upper bound $\interflatty{Z}{R}{C}$ \emph{entrywise} to recover a regular flattening whose norm can be computed using the formula \eqref{eq:nearly_combinatorial_flattening_formula} (see Remark~\ref{rmk:interflatt_nearcombinatorial}). We will show that the chaos variables of graph matrices can always be ordered so that $k\le f(\alpha)$ iterations suffice to achieve the same upper bound on the partial flattenings as was obtained in the previous section for the final flattenings, resulting in an improved power of the logarithm.

\begin{proof}[Proof of Theorem~\ref{thm:gmnormbound}: upper bound with $(\log n)^{\frac{1}{2}f(\alpha)}$ factor]
We begin by choosing a special ordering of the edges $E(\alpha)$ of the given shape $\alpha$, as follows.
    \begin{enumerate}
    \itemsep\medskipamount
        \item By Menger's theorem (Theorem~\ref{thm:menger}), there is a family of $\abs{\smin}$ vertex-disjoint paths from $U_\alpha$ to $V_\alpha$, each of which contains exactly one point from $U_\alpha$ and one point from $V_\alpha$. 
        We place the union of all $k_1$ edges in these paths last in our ordering of $E(\alpha)$.
        \item Next, we choose the smallest number $k_2$ of additional edges,
        so that every non-isolated middle vertex that is not contained in one of the above paths is incident to one of the additional edges. We place the additional edges in 
        the middle of our ordering of $E(\alpha)$.
        \item All remaining edges are placed at the beginning of our ordering of $E(\alpha)$.
    \end{enumerate}
We claim that $k=k_1+k_2 \leq f(\alpha)$.
Indeed, by construction, the set of paths constructed in the first step contains exactly $\abs{\smin}-\abs{U_\alpha\cap V_\alpha}$ paths of lengths $\ell_i\ge 2$, each of which contains exactly $\ell_i-1$ edges and $\ell_i-2$ middle vertices. The union of these paths therefore contain exactly 
$$
    \sum_{i=1}^{\abs{\smin}-\abs{U_\alpha\cap V_\alpha}} (\ell_i-2) =
    k_1-\abs{\smin}+\abs{U_\alpha\cap V_\alpha}    
$$
(necessarily non-isolated) middle vertices. As the total number of non-isolated middle vertices is $\abs{W_\alpha} - \abs{\wiso}$, we must therefore choose at most
$$
    k_2 \le \abs{W_\alpha} - \abs{\wiso} - 
    (k_1-\abs{\smin}+\abs{U_\alpha\cap V_\alpha}) = f(\alpha)-k_1
$$
additional edges in the second step. This establishes the claim.
    
Now let $M_{\alpha, E}$ be as in the proof of Theorem~\ref{thm:gmnormbound}, and let $Y$ be its decoupled version which is a chaos of nearly combinatorial type. 
Then any intermediate flattening $\interflatta{Z}{R}{C}$ that appears in~\eqref{eq:stoppednck} has the last $k$ shape edges (chaos coordinates) assigned to either $R$ or $C$. Therefore:
    \begin{enumerate}
    \itemsep\medskipamount
        \item Each path constructed in the first step above contains at least one vertex (summation index) in $\RR\cap\CC$, so $\abs{\RR \cap \CC} \geq \abs{\smin}$;
        \item every non-isolated middle vertex is in $\RR \cup \CC$, so $\abs{\RR^c \cap \CC^c} \leq \abs{\wiso}$.
    \end{enumerate}
By upper bounding $\interflatty{Z}{R}{C}$ entrywise and applying \eqref{eq:nearly_combinatorial_flattening_formula} (see Remark~\ref{rmk:interflatt_nearcombinatorial}), we obtain
    \begin{equation*}
        \max_{R' \sqcup C' = \brac{q-k+1, \ldots, q}} \E\norm{\interflatty{1:q-k}{R'\cup\brac{q+1}}{C'\cup\brac{q+2}}} \leq n^{\frac{1}{2}\brap{\abs{V(\alpha)} - \abs{\smin} + \abs{\wiso}}}
    \end{equation*}
precisely as in~\eqref{eq:gmsigmaparameter}. The conclusion now follows from the partially iterated NCK inequality~\eqref{eq:stoppednck}.
\end{proof}

\begin{remark}\label{remark:lowerboundgraphmatrices}
    The lower bound in Theorem~\ref{thm:gmnormbound} can be proved by considering a chaos of combinatorial type that is obtained from $M_\alpha$ by considering only a subset of the summands (by restricting the input distribution to the edge set of a $V(\alpha)$-partite graph on $n$ nodes). We defer the details of this argument to~\cite{IteratedNCK-Journal}; a similar idea is used in~\cite{ahn2016graph}.
\end{remark}

\subsection{Sharper bounds on graph matrices and ellipsoid fitting}\label{subsec:ellifit}

An important example where a sharper bound on the spectral norm of a graph matrix was derived is in the context of the ellipsoid fitting problem~\cite{Hsieh2023EllipsoidFU}. The ellipsoid fitting conjecture is a question in stochastic geometry that has received considerable attention recently (see~\cite{tulsiani2023ellipsoid,Hsieh2023EllipsoidFU,bandeira2024fitting} and references therein). In order to obtain a lower bound of the correct asymptotic order,\footnote{A lower bound with the correct asymptotic order was concurrently obtained in~\cite{tulsiani2023ellipsoid,Hsieh2023EllipsoidFU,bandeira2024fitting}.} the authors of~\cite{Hsieh2023EllipsoidFU} developed techniques to remove spurious logarithmic factors from the bound on the spectrum of certain graph matrices. These arguments involve sophisticated refinements of moment method calculations. In this section we show how Theorem~\ref{thm:iteratedstrongrose} and Algorithm~\ref{alg:buildthetable} can be used to effortlessly recover these improvements as a mechanical application of our general theory.

The two random matrices that need to be analyzed in this procedure (we refer the reader to~\cite{Hsieh2023EllipsoidFU}, in particular Proposition 2.3 in this reference, for the derivation of how these matrices arise) are the $m\times m$ random matrices $M_\phi$ and $M_\psi$ given by
\begin{equation}\label{eq:Malpha}
    M_\phi = \sum_{i\neq j\in [m]}\sum_{a \neq b \in [d]} \brap{g_{i,a}g_{i,b}g_{j,a}g_{j,b}}  e_i \otimes e_j^\top,
\end{equation}
\begin{equation}\label{eq:Mbeta}
    M_\psi = \sum_{i\neq j\in [m]}\sum_{a \in [d]} \brap{g_{i,a}^2-1}\brap{g_{j,a}^2-1} 
    e_i\otimes e_j^\top,
\end{equation}
where $\brap{g_{i,a}}_{i \in [m], a \in [d]}$ are $md$ i.i.d.\ standard gaussian variables. The motivating example has $m\asymp d^2$, see~\cite{Hsieh2023EllipsoidFU}, so that the assumption of Theorem \ref{thm:ellips} below is automatically satisfied.

\begin{remark}
While $M_\phi$ and $M_\psi$ are not precisely graph matrices in the sense of Definition \ref{def:graphmatrices}, they may be viewed as generalized graph matrices in the sense of \cite{ahn2016graph}. Here we gloss over the distinction and simply view these matrices
as special instances of chaoses of combinatorial type.
\end{remark}

Using our tools we provide an alternative proof of Lemma 2.7 from~\cite{Hsieh2023EllipsoidFU} (note that there is an additional scaling by $d^2$ to obtain random variables of unit variance).

\begin{theorem}\label{thm:ellips}
    Let $M_\phi$ and $M_\psi$ be the random matrices in~\eqref{eq:Malpha} and~\eqref{eq:Mbeta}. We have
    \begin{equation*}
        \E\norm{M_\phi} \lesssim d\sqrt{m} \lor m,
        \qquad
        \E\norm{M_\psi} \lesssim m \lor \sqrt{md},
    \end{equation*}
    provided that $d, m \gtrsim \log(d+m)^{9}$.
\end{theorem}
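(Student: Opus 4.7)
The plan is to recognize both $M_\phi$ and $M_\psi$ as matrix chaoses of nearly combinatorial type and apply the appropriate iterated strong inequality from Section~\ref{sec:mainineq}, with flattening parameters computed via Algorithm~\ref{alg:buildthetable} (or equivalently Proposition~\ref{prop:flattenings_nearly_combinatorial}). After decoupling (Theorem~\ref{thm:decoupling}), the norm of every flattening is determined combinatorially, so the main work reduces to identifying which flattenings dominate the $\sigma$- and $v$-parameters, by symmetry.

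I will first treat $M_\phi$. After decoupling, this becomes a chaos of order $q=4$ with $p=4$ summation indices (two of size $m$, namely $i,j$; two of size $d$, namely $a,b$), chaos coordinates $I_1=(i,a)$, $I_2=(i,b)$, $I_3=(j,a)$, $I_4=(j,b)$, matrix coordinates $I_5=i$ and $I_6=j$, and weight function $\1_{i\neq j}\1_{a\neq b}$ of unit $\ell_\infty$ norm. I will check that among the $2^4$ assignments of the chaos coordinates to $R$ or $C$, the dominant $\sigma$-flattenings are the ``pure'' ones (all chaos on a single side, giving $\text{norm}^2\le md^2$) and the ``$i$-vs-$j$'' split (chaos $1,2$ in $R$ and $3,4$ in $C$, giving $\text{norm}^2\le m^2$, since then both $a$ and $b$ lie in $\mathcal{R}\cap\mathcal{C}$); in every other assignment at least one of the $a$- or $b$-indices is shared between $\mathcal{R}$ and $\mathcal{C}$ together with the opposite matrix index, producing a strictly smaller bound. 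Hence $\sigma(\AA)\le d\sqrt{m}\vee m$. A parallel enumeration of $v$-flattenings (where $5,6\in C$ and $R\subseteq\{1,2,3,4\}$ is nonempty) gives $v(\AA)\le \sqrt{md}\vee d$, the dominant cases being $R=\{I_1\}$ and $R=\{I_1,I_3\}$. Theorem~\ref{thm:iteratedfreenck} with $\|g\|_{\psi_2}\lesssim 1$ then produces $\E\norm{M_\phi}\lesssim (d\sqrt{m}\vee m) + \log(d+m)^{3}(\sqrt{md}\vee d)$, and the error term is absorbed into $d\sqrt{m}\vee m$ under the hypothesis $d,m\gtrsim\log(d+m)^9$.

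Next I will treat $M_\psi$. The key observation is to replace the Gaussian variables by $h_{i,a}=g_{i,a}^2-1$, which are i.i.d., centered, and sub-exponential, so that $\alpha(h)=\norm{h}_{L^{\log(d+m)}}\lesssim\log(d+m)$. After a harmless rescaling to unit variance and decoupling, $M_\psi$ becomes a chaos of order $q=2$ with $p=3$ summation indices $i,j,a$, chaos coordinates $(i,a),(j,a)$, matrix coordinates $i,j$, and weight function $\1_{i\neq j}$; the structure is very nearly that of Table~\ref{tab:KR2}. Algorithm~\ref{alg:buildthetable} then gives $\sigma(\AA)\le m\vee\sqrt{md}$ (the dominant case sends the two chaos coordinates to opposite sides, so that $a\in\mathcal{R}\cap\mathcal{C}$) and $v(\AA)\le\sqrt{m}\vee\sqrt{d}$. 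Theorem~\ref{thm:iteratedstrongrose} (with $\alpha(h)^q\log(d+m)^{(q+3)/2}=\log(d+m)^{9/2}$) yields $\E\norm{M_\psi}\lesssim (m\vee\sqrt{md}) + \log(d+m)^{9/2}(\sqrt{m}\vee\sqrt{d})$, and the error term is again absorbed under the hypothesis $d,m\gtrsim\log(d+m)^9$.

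The only real obstacle is bookkeeping over the (relatively many) flattenings of $M_\phi$, but by symmetry between $a\leftrightarrow b$ and $i\leftrightarrow j$ only a few representative assignments must be checked. Once the $\sigma$- and $v$-parameters have been identified, the iterated strong inequalities of Section~\ref{sec:mainineq} produce the theorem essentially mechanically, and no new ingredients beyond the framework of Section~\ref{sec:3} are needed.
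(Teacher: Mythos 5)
Your proposal is correct and follows essentially the same route as the paper: decouple, identify each matrix as a chaos of (nearly) combinatorial type, read off the $\sigma$- and $v$-flattening parameters via Algorithm~\ref{alg:buildthetable}/Proposition~\ref{prop:flattenings_nearly_combinatorial}, apply Theorem~\ref{thm:iteratedfreenck} for $M_\phi$ and Theorem~\ref{thm:iteratedstrongrose} for $M_\psi$, then absorb the logarithmic error terms under $d,m\gtrsim\log(d+m)^9$. The flattening values you identify ($\sigma(\AA_\phi)=d\sqrt m\vee m$, $v(\AA_\phi)=\sqrt{md}\vee d$, $\sigma(\AA_\psi)=m\vee\sqrt{md}$, $v(\AA_\psi)=\sqrt m\vee\sqrt d$) and the dominant assignments you single out match Tables~\ref{tab:elli_fitt_alpha} and~\ref{tab:elli_fitt_beta}, and your explicit handling of the indicator weight and the unit-variance normalization of $g^2-1$ is, if anything, slightly more careful than the paper's prose.
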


\begin{proof}
The proof is similar to that of Theorem \ref{theorem:forsosnolog}.
Note that $M_\phi$ and $M_\psi$ are square-free matrix chaoses, whose decoupled versions are chaoses of combinatorial type.

\begin{table}[t]
\begin{minipage}[t]{.49\linewidth}
{\tiny
    \begin{tabular}[t]{| c | c c c c : c c | c c c c | c |}
        \hline
        & \multicolumn{6}{c|}{coordinates} & \multicolumn{4}{c|}{summation} & \\
        tp. & \multicolumn{4}{c:}{chaos} & \multicolumn{2}{c|}{matrix} & \multicolumn{4}{c|}{indices} & $\text{norm}^2$ \\
        & $ia$ & $ib$ & $ja$ & $jb$ & $i$ & $j$ & $i$ & $j$ & $a$ & $b$ & \\
        \hline
        \hline
        \multirow{16}{*}{\small $\sigma$}
        & \myceco \myR & \myceco \myR & \myceco \myR & \myceco \myR & \myceco \myR & \myceco \myC &  \myceco \myR & \myceco \myRC  & \myceco \myR & \myceco \myR & \myceco $m d^2$ \\
        & \myR & \myR & \myR & \myC &  \myR & \myC &  \myR & \myRC  & \myR & \myRC  & $m d$ \\
        & \myR & \myR & \myC &  \myR & \myR & \myC &  \myR & \myRC  & \myRC  & \myR & $m d$ \\
        & \myceco \myR & \myceco \myR & \myceco \myC &  \myceco \myC &  \myceco \myR & \myceco \myC &  \myceco \myR & \myceco \myC &  \myceco \myRC  & \myceco \myRC  & \myceco $m^2$ \\
        & \myR & \myC &  \myR & \myR & \myR & \myC &  \myRC  & \myRC  & \myR & \myRC  & $d$ \\
        & \myR & \myC &  \myR & \myC &  \myR & \myC &  \myRC  & \myRC  & \myR & \myC &  $d^2$ \\
        & \myR & \myC &  \myC &  \myR & \myR & \myC &  \myRC  & \myRC  & \myRC  & \myRC  & $1$ \\
        & \myR & \myC &  \myC &  \myC &  \myR & \myC &  \myRC  & \myC &  \myRC  & \myC &  $m d$ \\
        & \myC &  \myR & \myR & \myR & \myR & \myC &  \myRC  & \myRC  & \myRC  & \myR & $d$ \\
        & \myC &  \myR & \myR & \myC &  \myR & \myC &  \myRC  & \myRC  & \myRC  & \myRC  & $1$ \\
        & \myC &  \myR & \myC &  \myR & \myR & \myC &  \myRC  & \myRC  & \myC &  \myR & $d^2$ \\
        & \myC &  \myR & \myC &  \myC &  \myR & \myC &  \myRC  & \myC &  \myC &  \myRC  & $m d$ \\
        & \myC &  \myC &  \myR & \myR & \myR & \myC &  \myRC  & \myRC  & \myRC  & \myRC  & $1$ \\
        & \myC &  \myC &  \myR & \myC &  \myR & \myC &  \myRC  & \myRC  & \myRC  & \myC &  $d$ \\
        & \myC &  \myC &  \myC &  \myR & \myR & \myC &  \myRC  & \myRC  & \myC &  \myRC  & $d$ \\
        & \myceco \myC &  \myceco \myC &  \myceco \myC &  \myceco \myC &  \myceco \myR & \myceco \myC &  \myceco \myRC  & \myceco \myC &  \myceco \myC &  \myceco \myC &  \myceco $m d^2$ \\
        \hline
    \end{tabular}
}
\end{minipage}
\hfill
\begin{minipage}[t]{.49\linewidth}
{\tiny
    \begin{tabular}[t]{| c | c c c c : c c | c c c c | c |}
        \hline
        & \multicolumn{6}{c|}{coordinates} & \multicolumn{4}{c|}{summation} & \\
        tp. & \multicolumn{4}{c:}{chaos} & \multicolumn{2}{c|}{matrix} & \multicolumn{4}{c|}{indices} & $\text{norm}^2$ \\
        & $ia$ & $ib$ & $ja$ & $jb$ & $i$ & $j$ & $i$ & $j$ & $a$ & $b$ & \\
        \hline
        \hline
        \multirow{15}{*}{\small $v$}
        & \myR & \myR & \myR & \myR & \myC &  \myC &  \myRC  & \myRC  & \myR & \myR & $d^2$ \\
        & \myR & \myR & \myR & \myC &  \myC &  \myC &  \myRC  & \myRC  & \myR & \myRC  & $d$ \\
        & \myR & \myR & \myC &  \myR & \myC &  \myC &  \myRC  & \myRC  & \myRC  & \myR & $d$ \\
        & \myR & \myR & \myC &  \myC &  \myC &  \myC &  \myRC  & \myC &  \myRC  & \myRC  & $m$ \\
        & \myR & \myC &  \myR & \myR & \myC &  \myC &  \myRC  & \myRC  & \myR & \myRC  & $d$ \\
        & \myR & \myC &  \myR & \myC &  \myC &  \myC &  \myRC  & \myRC  & \myR & \myC &  $d^2$ \\
        & \myR & \myC &  \myC &  \myR & \myC &  \myC &  \myRC  & \myRC  & \myRC  & \myRC  & $1$ \\
        & \myR & \myC &  \myC &  \myC &  \myC &  \myC &  \myRC  & \myC &  \myRC  & \myC &  $m d$ \\
        & \myC &  \myR & \myR & \myR & \myC &  \myC &  \myRC  & \myRC  & \myRC  & \myR & $d$ \\
        & \myC &  \myR & \myR & \myC &  \myC &  \myC &  \myRC  & \myRC  & \myRC  & \myRC  & $1$ \\
        & \myC &  \myR & \myC &  \myR & \myC &  \myC &  \myRC  & \myRC  & \myC &  \myR & $d^2$ \\
        & \myC &  \myR & \myC &  \myC &  \myC &  \myC &  \myRC  & \myC &  \myC &  \myRC  & $m d$ \\
        & \myC &  \myC &  \myR & \myR & \myC &  \myC &  \myC &  \myRC  & \myRC  & \myRC  & $m$ \\
        & \myC &  \myC &  \myR & \myC &  \myC &  \myC &  \myC &  \myRC  & \myRC  & \myC &  $m d$ \\
        & \myC &  \myC &  \myC &  \myR & \myC &  \myC &  \myC &  \myRC  & \myC &  \myRC  & $m d$ \\
        \hline
    \end{tabular}
}
\end{minipage}
    \caption{Flattenings of $\AA_\phi$: $\sigma(\AA_\phi) = d\sqrt{m} \lor m$, $v(\AA_\phi) = \sqrt{md} \lor d$, used in~\eqref{eq:ellifitalpha}.}
    \label{tab:elli_fitt_alpha}
\end{table}

    \begin{enumerate}
        \item The decoupled version of $M_{\phi}$ is a gaussian matrix chaos of order $4$. Algorithm~\ref{alg:buildthetable} outputs Table~\ref{tab:elli_fitt_alpha}, and the iterated strong NCK inequality (Theorem~\ref{thm:iteratedfreenck}) and Theorem~\ref{thm:decoupling} yield
        \begin{equation}\label{eq:ellifitalpha}
            \E\norm{M_\phi} \lesssim \brap{d\sqrt{m} \lor m} + \log(md+m)^{3} \brap{\sqrt{md} \lor d}.
        \end{equation}
        
        \item The decoupled version of $M_{\psi}$ is a matrix chaos of order $2$ whose random variables are given by $h_{i, a} = g_{i, a}^2 - 1$. Algorithm~\ref{alg:buildthetable} outputs Table~\ref{tab:elli_fitt_beta}, and the iterated strong matrix Rosenthal inequality (Theorem~\ref{thm:iteratedstrongrose}) and Theorem~\ref{thm:decoupling} yield
        \begin{equation}\label{eq:ellifitbeta}
            \E\norm{M_\psi} \lesssim \brap{m \lor \sqrt{md}} + \log(md+m)^{\frac{9}{2}} \brap{\sqrt{m} \lor \sqrt{d}}.
        \end{equation}
        
\begin{table}[t]
    \centering
    \begin{tabular}{| c | c c : c c | c c c | c |}
        \hline
        & \multicolumn{4}{c|}{coordinates} & \multicolumn{3}{c|}{summation} & \\
        type & \multicolumn{2}{c:}{chaos} & \multicolumn{2}{c|}{matrix} & \multicolumn{3}{c|}{indices} & $\text{norm}^2$ \\
        & $ia$ & $ja$ & $i$ & $j$ & $i$ & $j$ & $a$ & \\
        \hline
        \hline
        \multirow{4}{*}{$\sigma$}
        & \myceco \myR & \myceco \myR & \myceco \myR & \myceco \myC &  \myceco \myR & \myceco \myRC  & \myceco \myR & \myceco $m d$ \\
        & \myceco \myR & \myceco \myC &  \myceco \myR & \myceco \myC &  \myceco \myR & \myceco \myC &  \myceco \myRC  & \myceco $m^2$ \\
        & \myC &  \myR & \myR & \myC &  \myRC  & \myRC  & \myRC  & $1$ \\
        & \myceco \myC &  \myceco \myC &  \myceco \myR & \myceco \myC &  \myceco \myRC  & \myceco \myC &  \myceco \myC &  \myceco $m d$ \\
        \hline
        \multirow{3}{*}{$v$}
        & \myR & \myR & \myC &  \myC &  \myRC  & \myRC  & \myR & $d$ \\
        & \myR & \myC &  \myC &  \myC &  \myRC  & \myC &  \myRC  & $m$ \\
        & \myC &  \myR & \myC &  \myC &  \myC &  \myRC  & \myRC  & $m$ \\
        \hline
    \end{tabular}
    \caption{Flattenings of $\AA_\psi$: $\sigma(\AA_\psi) = m \lor \sqrt{md}$, $v(\AA_\psi) = \sqrt{m} \lor \sqrt{d}$, used in~\eqref{eq:ellifitbeta}.} 
    \label{tab:elli_fitt_beta}
\end{table}

    \end{enumerate}
    The first term in \eqref{eq:ellifitalpha} and in \eqref{eq:ellifitbeta} dominates under the  assumption on $d,m$, concluding the proof.
\end{proof}

\subsection*{Acknowledgements}
ASB would like to thank Sam Hopkins, Ankur Moitra, and Holger Rauhut for asking insightful questions in conversations over the past few years that helped motivate the methods of this paper. RvH was supported in part by NSF grant DMS-2347954.

\bibliographystyle{alpha}
\bibliography{ref}

\addtocontents{toc}{\protect\setcounter{tocdepth}{1}}%
\setcounter{tocdepth}{1}%

\appendix
\section{Proofs of main results and supporting lemmas}\label{section:proofs}

\subsection{The iteration scheme}

The basic approach to all our main results was outlined in Section~\ref{subsec:iterapproach}. For each of the iterated inequalities, we start with an inequality for linear random matrices (i.e., for chaos of order $q=1$). The linear inequalities involve four parameters $\sigma_R,\sigma_C,v,r$ defined in Section~\ref{subsubsec:flattlincase}. Applying these bounds conditionally on all but one of the chaos coordinates gives rise to four intermediate flattenings as shown in Figure~\ref{fig:branching}. As the intermediate flattenings are themselves matrix chaoses of smaller order, the proofs proceed by induction.

The following lemma formalizes the fact, used in the induction step, that the final flattenings of intermediate flattenings coincide with the final flattenings of the original chaos. 

\begin{lemma}[$\sigma$, $v$ and $r$ of intermediate flattenings]\label{lem:finalmixedflattenings}
Let $Y$ be a decoupled chaos as in~\eqref{eq:decnongaussianchaos}. Given an intermediate flattening $\interflatty{Z}{R}{C}$, which is a chaos of order $\abs{Z}$, we have
    \begin{equation*}\label{eq:sigmaintermediatedefinition}
        \sigma\brap{\interflatta{Z}{R}{C}} = \max_{R' \sqcup C' = Z} \norm{\flatta{R \cup R'}{C \cup C'}},
    \end{equation*}
    \begin{equation*}\label{eq:vintermediatedefinition}
        v\brap{\interflatta{Z}{R}{C}} = \max_{\substack{R' \sqcup C' = Z\\R'\neq\emptyset}} \norm{\flatta{R'}{R \cup C \cup C'}},
    \end{equation*}
    \begin{equation*}\label{eq:rintermediatedefinition}
        r\brap{\interflatta{Z}{R}{C}} = \max_{\substack{R' \cup C' = Z\\R' \cap C' \neq \emptyset}} \norm{\flatta{R \cup R'}{C \cup C'}}.
    \end{equation*}
\end{lemma}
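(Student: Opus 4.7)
The plan is to prove the lemma by a direct unpacking of definitions, handling the three identities in parallel since they differ only in the constraints placed on the index subsets used to form flattenings.

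First, I would observe that $\interflatty{Z}{R}{C}$ is a matrix chaos of order $|Z|$ in the random variables $(h^{(t)}_i)_{t\in Z,\, i\in[m]}$. The associated coefficient tensor $\interflatta{Z}{R}{C}$ therefore has $|Z|+2$ coordinates: the $|Z|$ chaos coordinates labeled by $t\in Z$, together with two matrix coordinates corresponding respectively to the row multi-index $(i_t)_{t\in R}$ and column multi-index $(i_t)_{t\in C}$ of $\interflatty{Z}{R}{C}$.

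Next, I would apply the definitions \eqref{eq:sigmadefinition}, \eqref{eq:vdefinition}, and \eqref{eq:rdefinition} to the tensor $\interflatta{Z}{R}{C}$. Each of these takes a maximum over flattenings of this tensor parameterized by an assignment of the $|Z|$ chaos coordinates to subsets $R', C' \subseteq Z$ (disjoint with $R'\sqcup C'=Z$ for $\sigma$; disjoint with $R'\neq\emptyset$ for $v$; overlapping with $R'\cup C'=Z$ and $R'\cap C'\neq\emptyset$ for $r$), together with an assignment of the two matrix coordinates of $\interflatta{Z}{R}{C}$ (kept as row/column for $\sigma,r$; both sent to the column for $v$). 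These constraints restrict to $Z$ exactly the defining constraints of $\sigma$-, $v$-, and $r$-flattenings. The key step is to verify that, for each such assignment, the resulting flattening of $\interflatta{Z}{R}{C}$ agrees, up to a reordering of the row and column multi-indices, with the flattening of the original tensor $\AA$ claimed in the lemma. This follows by substituting \eqref{eq:intermediateflattenings} into the flattening formula \eqref{eq:flattenings}, expanding the tensor products $\bigotimes_{t\in R}e_{i_t}$ and $\bigotimes_{t\in C}e_{i_t}^\adj$ into their individual factors, and collecting the basis vectors $e_{i_t}$ with $t\in R\cup R'$ on the row side and those with $t\in C\cup C'$ on the column side (or $t\in R\cup C\cup C'$ in the $v$ case). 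A reordering of the resulting tensor factors is a unitary change of basis, which preserves the spectral norm.

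I do not anticipate any substantive obstacle: the lemma is essentially a bookkeeping identity asserting that a flattening of a flattening coincides with a single flattening. The only subtlety is that the matrix coordinates of $\interflatta{Z}{R}{C}$ are themselves multi-indices inherited from $R$ and $C$, so one must carefully track how the outer index sets $R',C'$ combine with $R,C$ to determine the final row and column of the resulting flattening. Once this bookkeeping is in place, the three identities follow uniformly.
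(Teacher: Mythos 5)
Your proposal is correct and follows essentially the same route as the paper's proof: recognizing that $\interflatty{Z}{R}{C}$ is itself a matrix chaos of order $|Z|$ whose coefficient tensor reuses the entries of $\AA$, and then reading off the three identities directly from the definitions of $\sigma$-, $v$-, and $r$-flattenings applied to $\interflatta{Z}{R}{C}$. The paper's proof is more terse but performs exactly this definitional unpacking; your additional note that the reordering of tensor factors is a unitary change of basis (hence spectral-norm-preserving) is the same implicit fact the paper relies on.
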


\begin{proof}
By its definition \eqref{eq:intermediateflattenings}, the intermediate flattening
$\interflatty{Z}{R}{C}$ is a matrix chaos with the same coefficients $\mathcal{A}_{i_1,\ldots,i_{q+2}}$ as $Y$, but where the $|Z|$ chaos coordinates are indexed by $i_t$ for $t\in Z$ and the two matrix coordinates are indexed by $(i_t:t\in R)$ and $(i_t:t\in C)$, respectively. The conclusion now follows readily from the definitions 
\eqref{eq:sigmadefinition}, \eqref{eq:vdefinition} and \eqref{eq:rdefinition} of the chaos parameters.
\end{proof}

For completeness, we record here two basic relations between the chaos parameters.

\begin{lemma}\label{lem:comparison_v_r}
For any chaos $Y$ as in~\eqref{eq:decnongaussianchaos}, we have
    \begin{equation*}
        r(\AA) \leq \sigma(\AA) \quad \text{ and }\quad r(\AA) \leq v(\AA).
    \end{equation*}
\end{lemma}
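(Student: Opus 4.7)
The plan is to prove both inequalities via a single structural observation: any $r$-flattening is block-diagonal in the ``diagonalized'' coordinates $D := R \cap C$, and its operator norm equals the maximum operator norm of the blocks. One can then dominate this maximum by an appropriate $\sigma$- or $v$-flattening in which the blocks are either vertically stacked or vectorized and stacked as rows.

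Fix an $r$-flattening $\flatta{R}{C}$ and decompose $[q+2] = A \sqcup B \sqcup D$ with $A := R \setminus C$, $B := C \setminus R$, and $D := R \cap C$. By the definition of an $r$-flattening, $D \subseteq [q]$, $D \neq \emptyset$, $q+1 \in A$, and $q+2 \in B$. Expanding \eqref{eq:flattenings}, the entry of $\flatta{R}{C}$ at row index $(r_A, r_D)$ and column index $(c_B, c_D)$ equals $\prod_{t \in D} \mathbf{1}_{r_t = c_t}$ times $\AA_{i_1, \ldots, i_{q+2}}$ evaluated with $A$-coordinates from $r_A$, $B$-coordinates from $c_B$, and $D$-coordinates from the common value $r_D = c_D$. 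Thus, after sorting rows and columns by the value of the $D$-coordinates, $\flatta{R}{C}$ is block-diagonal with blocks $\{N_a\}$ indexed by multi-indices $a$ over $D$, where $N_a$ has entries $\AA_{\ldots, i_D = a, \ldots}$ with rows indexed by $i_A$ and columns by $i_B$. In particular,
\[
    \|\flatta{R}{C}\| = \max_a \|N_a\|.
\]

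To establish $r(\AA) \leq \sigma(\AA)$, I would compare with $\flatta{R}{B}$. Since $R \sqcup B = [q+2]$, $q+1 \in R$, and $q+2 \in B$, this is a valid $\sigma$-flattening. Moreover $\flatta{R}{B}$ is precisely the vertical concatenation of the blocks $N_a$, so that $\flatta{R}{B}^\top \flatta{R}{B} = \sum_a N_a^\top N_a \succeq N_a^\top N_a$ for every $a$, which yields $\|\flatta{R}{B}\| \geq \max_a \|N_a\| = \|\flatta{R}{C}\|$.

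To establish $r(\AA) \leq v(\AA)$, I would instead compare with $\flatta{D}{A \cup B}$. Since $D \sqcup (A \cup B) = [q+2]$, $D \neq \emptyset$, and $D \subseteq [q]$ (forcing $q+1, q+2 \in A \cup B$), this is a valid $v$-flattening. By construction, the row of $\flatta{D}{A \cup B}$ indexed by $a$ is exactly $\vec(N_a)^\top$, so
\[
    \|\flatta{D}{A \cup B}\| \geq \|\vec(N_a)\| = \|N_a\|_F \geq \|N_a\|
\]
for every $a$, and hence $\|\flatta{D}{A \cup B}\| \geq \|\flatta{R}{C}\|$. Maximizing over all $r$-flattenings in both cases yields the lemma. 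There is no substantive obstacle; the only care required is the bookkeeping needed to verify that the constructed flattenings $\flatta{R}{B}$ and $\flatta{D}{A \cup B}$ satisfy the formal membership and non-emptiness conditions for $\sigma$- and $v$-flattenings, which follows directly from the defining properties of an $r$-flattening.
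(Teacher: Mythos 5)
Your proof is correct, and it rests on the same two observations the paper uses: the operator norm of a block-diagonal matrix is dominated by that of the vertical concatenation of its blocks (for $r\leq\sigma$), and each block's operator norm is dominated by its Frobenius norm, realized as a row of a vectorized flattening (for $r\leq v$). The only stylistic difference is that the paper first records these facts as explicit $q=1$ inequalities and then reduces an arbitrary $r$-flattening to the $q=1$ case via an auxiliary $3$-tensor $\BB$ grouping the index sets $R\cap C$, $R\setminus C$, $C\setminus R$, whereas you carry out the general computation directly. The $\sigma$- and $v$-flattenings you construct, $\flatta{R}{C\setminus R}$ and $\flatta{R\cap C}{(R\setminus C)\cup(C\setminus R)}$, coincide with the paper's $\flatta{R}{C\setminus R}$ and $\flatta{R\cap C}{R^c\cup C^c}$, so the argument is essentially the same.
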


\begin{proof}
In the case $q=1$, we may readily read off from the expressions in 
section \ref{subsubsec:flattlincase} that
$$
     r(Y) = \norm{\flatta{\onewithcolor, 2}{\onewithcolor, 3}}
     \le \norm{\flatta{\onewithcolor, 2}{3}} \le \sigma(Y),
\qquad
     r(Y) = \norm{\flatta{\onewithcolor, 2}{\onewithcolor, 3}} \le 
     \norm{\flatta{\onewithcolor}{2, 3}} = v(Y),
$$
where we used that $\|A_i\| \le \|A_i\|_F=\|{\vec(A_i)}\|$ in the second inequality.

Now let $q\ge 2$, and let $\flatta{R}{C}$ be any $r$-flattening.
Consider a $3$-tensor $\BB$ whose entries are given by
$\BB_{\vect{i}_{R \cap C}, \vect{i}_{R \setminus C}, \vect{i}_{C \setminus R}} = 
\AA_{\vect{i}}$,
where $\vect{i} = \brap{i_1, \ldots, i_{q+2}}$ and $\vect{i}_T = \brap{i_t \colon t \in T}$.
    Then
    \begin{equation*}
        \flattb{\onewithcolor, 2}{\onewithcolor, 3} = \flatta{R}{C},\quad
        \flattb{\onewithcolor, 2}{3} = \flatta{R}{C \setminus R},\quad
        \flattb{\onewithcolor}{2, 3} = \flatta{R \cap C}{R^c \cup C^c}.
    \end{equation*}
    As $\flatta{R}{C \setminus R}$ is a $\sigma$-flattening of $\AA$ and $\flatta{R \cap C}{R^c \cup C^c}$ is a $v$-flattening of $\AA$,
    we obtain
    \begin{equation*}
        \norm{\flatta{R}{C}} = \norm{\flattb{\onewithcolor, 2}{\onewithcolor, 3}} \leq \norm{\flattb{\onewithcolor, 2}{3}} = \norm{\flatta{R}{C \setminus R}} \leq \sigma(\AA)
    \end{equation*}
    \begin{equation*}
        \norm{\flatta{R}{C}} = \norm{\flattb{\onewithcolor, 2}{\onewithcolor, 3}} \leq \norm{\flattb{\onewithcolor}{2, 3}} = \norm{\flatta{R \cap C}{R^c \cup C^c}} \leq v(\AA)
    \end{equation*}
    by applying the inequalities for the case $q=1$ to the tensor $\BB$.
\end{proof}

\subsection{Proof of Iterated NCK}\label{sec:proofIterNCK}

We start by stating the linear theorem. The following result is classical, but we spell it out in a slightly more general setting than is customary.

\begin{theorem}[Noncommutative Khintchine (NCK) inequality]\label{thm:nck}
    Let $X = \sum_{i\in[m]} h_{i} A_{i}$, where $h_1,\dots,h_m$ are i.i.d.\ copies of a centered random variable $h$ and $A_1,\ldots,A_m$ are $d_1\times d_2$ matrix coefficients (we define $d \coloneqq d_1 \lor d_2$). Then we have
$$
    \|h\|_{L^1}\brap{\sigma_R(X)+\sigma_C(X)}\lesssim \E\norm{X} \lesssim 
    \|h\|_{\psi_2}\log(d)^{\frac{1}{2}} \brap{\sigma_R(X)+\sigma_C(X)}.
$$
Alternatively, the upper bound remains valid if $\|h\|_{\psi_2}$ is replaced by
$\|h\|_{L^{\log m}}$.
\end{theorem}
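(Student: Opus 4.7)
The result is classical (see \cite[\S 9.8]{Pisier2003IntroductionTO}); we sketch the main ideas separately for the two bounds.

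\textbf{Lower bound.} Let $v$ be a unit vector for which $\sigma_R(X)^2 = \sum_i \|A_i v\|^2$ (the top eigenvalue of $\sum_i A_i^\top A_i$). Since $\|X\| \ge \|Xv\| = \|\sum_i h_i A_i v\|$, the task reduces to a Khintchine-type lower bound for a sum of independent centered random vectors. The bound
\begin{equation*}
\E\Big\|\sum_i h_i w_i\Big\| \gtrsim \|h\|_{L^1} \Big(\sum_i \|w_i\|^2\Big)^{1/2}
\end{equation*}
(valid for arbitrary deterministic vectors $w_i$ and centered i.i.d.\ $h_i$) would follow by applying Paley--Zygmund to $\|\sum_i h_i w_i\|^2$, using the second moment $\|h\|_{L^2}^2 \sum_i \|w_i\|^2$ and a fourth-moment estimate via the scalar Rosenthal inequality, and then invoking $\|h\|_{L^1} \le \|h\|_{L^2}$ for centered $h$. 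Applying this with $w_i = A_i v$ gives $\E\|X\| \gtrsim \|h\|_{L^1}\sigma_R(X)$; the symmetric argument with a left singular vector yields the $\sigma_C$ bound, and $\sigma_R + \sigma_C \asymp \max(\sigma_R,\sigma_C)$ completes the lower bound.

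\textbf{Upper bound.} The plan is to use the Pisier/Lust--Piquard moment method in Schatten norms. For any integer $p \ge 1$, $\E\|X\| \le (\E\|X\|_{S_{2p}}^{2p})^{1/(2p)}$. By Rademacher symmetrization with independent signs $\varepsilon_i$, followed by the classical Lust--Piquard inequality applied conditionally on $h$ and triangle inequality in $L^{2p}(h)$, I would obtain
\begin{equation*}
\bigl(\E\|X\|_{S_{2p}}^{2p}\bigr)^{1/(2p)} \lesssim \sqrt{p}\,\bigl((\E\|\textstyle\sum_i h_i^2 A_i^\top A_i\|_{S_p}^{p})^{1/(2p)} + (\E\|\sum_i h_i^2 A_i A_i^\top\|_{S_p}^{p})^{1/(2p)}\bigr).
\end{equation*}
The random positive matrix $\sum_i h_i^2 A_i^\top A_i$ has deterministic mean $\|h\|_{L^2}^2 \sum_i A_i^\top A_i$, whose $S_p$-Schatten norm is bounded by $d^{1/p}\|h\|_{L^2}^2\sigma_R^2$ via $\|M\|_{S_p} \le d^{1/p}\|M\|$; the deviation from the mean is controlled by a matrix Rosenthal / Bernstein bound for independent positive matrices in Schatten norms, using $\max_i \|A_i\| \le \sigma_R$. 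Choosing $p = \lceil\log(d+m)\rceil$ makes the dimensional factors $d^{1/p}$ and $m^{1/p}$ absolute constants, and bounds the leading term by $\sqrt{\log d}\,\|h\|_{L^2}(\sigma_R + \sigma_C) \le \sqrt{\log d}\,\|h\|_{\psi_2}(\sigma_R + \sigma_C)$, yielding the $\psi_2$-version. The $L^{\log m}$-version follows analogously by keeping $\|h\|_{L^{\log m}}$ (instead of bounding it by $\|h\|_{\psi_2}\sqrt{\log m}$) throughout.

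\textbf{Main obstacle.} The hard step is controlling the concentration of the random positive matrix $\sum_i h_i^2 A_i^\top A_i$ in Schatten norm around its deterministic mean. A naive bound replacing each $h_i^2$ by its $L^p$-moment $\|h\|_{L^{2p}}^2 \lesssim \|h\|_{\psi_2}^2 p$ would produce an additional factor of $p \asymp \log(d+m)$ and degrade the final exponent from $\sqrt{\log d}$ to $\log d$. Achieving the sharp $\sqrt{\log d}$ bound therefore requires the matrix Rosenthal / Bernstein estimate to properly separate the deterministic mean from the fluctuation, so that the fluctuation contributes only at subdominant order at the optimal moment $p \asymp \log(d+m)$.
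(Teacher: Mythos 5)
Your upper-bound plan is a genuinely different route from the paper's, and your lower-bound plan has a real gap.

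\textbf{Lower bound.} The reduction to a single unit vector $v$ maximizing $\sum_i\|A_iv\|^2$ is fine, and the vector-valued Khintchine lower bound
\begin{equation*}
\E\Big\|\sum_i h_i w_i\Big\| \,\gtrsim\, \|h\|_{L^1}\Big(\sum_i\|w_i\|^2\Big)^{1/2}
\end{equation*}
is indeed true for arbitrary centered i.i.d.\ $h_i$. But Paley--Zygmund does not deliver it. Applied to $Y=\|\sum_ih_iw_i\|^2$, Paley--Zygmund gives $\E\|X\|\gtrsim\|h\|_{L^2}(\sum_i\|w_i\|^2)^{1/2}\cdot(\E Y)^2/\E Y^2$, and the ratio $(\E Y)^2/\E Y^2$ is governed by the kurtosis $\|h\|_{L^4}^4/\|h\|_{L^2}^4$, not by $\|h\|_{L^1}/\|h\|_{L^2}$. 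Already for $m=1$ and $h$ a standardized sparse Rademacher ($h=\xi\eta/\sqrt{p}$ with $\xi\sim\mathrm{Ber}(p)$, $\eta$ a sign), one has $\E\|X\|=\|h\|_{L^1}\asymp\sqrt{p}$, but $(\E Y)^2/\E Y^2 = p$, so Paley--Zygmund only yields $\E\|X\|\gtrsim p\ll\sqrt{p}$. The step ``invoke $\|h\|_{L^1}\le\|h\|_{L^2}$'' cannot rescue this, because you never unconditionally establish an $\|h\|_{L^2}$ lower bound to weaken. The theorem allows arbitrary centered $h$, so this is not an edge case. The paper's proof works for all centered $h$ precisely because it never forms a moment ratio of $h$: it symmetrizes with independent Rademachers, applies Khintchine--Kahane conditionally on $\vect{h}$ (a purely Rademacher fact), deduces $\E\|X\|\gtrsim\E\|\sum_ih_i^2A_i^\top A_i\|^{1/2}+\E\|\sum_ih_i^2A_iA_i^\top\|^{1/2}$, and then uses that $(|h_1|,\dots,|h_m|)\mapsto\|\sum_ih_i^2A_i^\top A_i\|^{1/2}$ is a norm of a linear map of $(|h_i|A_i)_i$, hence convex, so Jensen replaces each $|h_i|$ by $\E|h_i|=\|h\|_{L^1}$. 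That convexity-plus-Jensen device is the mechanism you are missing.

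\textbf{Upper bound.} Your Schatten-moment route is correct in outline but much harder than what the paper does, and it leaves exactly the step you flag as an obstacle unresolved: one needs a matrix Rosenthal-type estimate at $p\asymp\log(d+m)$ to prevent the fluctuation of $\sum_ih_i^2A_i^\top A_i$ from promoting $\sqrt{\log d}$ to $\log d$. The paper bypasses this entirely. For the $\psi_2$-version it uses the Gaussian comparison theorem for subgaussian processes to reduce to $\E\|X_G\|$ with Gaussian coefficients, and then invokes the classical Gaussian NCK. For the $\|h\|_{L^{\log m}}$-version it symmetrizes, applies Rademacher NCK conditionally on $\vect{h}$, and then bounds $\|\sum_ih_i^2A_i^\top A_i\|^{1/2}\le(\max_i|h_i|)\,\sigma_R(X)$, with $\E\max_i|h_i|\lesssim\|h\|_{L^{\log m}}$. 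Both are one-step reductions to the classical Gaussian/Rademacher case and require no second-order concentration of the random quadratic form; if you wish to pursue your Schatten route you would need to actually prove the matrix Rosenthal estimate, which the paper's argument shows is unnecessary.
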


\begin{proof}
We begin with the lower bound.
Let $\varepsilon_i$ be i.i.d.\ Rademachers variables independent of $h_i$, and define
$\tilde X = \sum_{i\in[m]}\varepsilon_i h_{i} A_{i}$. Then $\E\|\tilde X\| \le 2\,\E\norm{X}$
by a standard symmetrization argument \cite[Lemma 6.3.2]{vershynin2018book}.  
Taking the expectation only with respect to $\boldsymbol{\varepsilon}$, we can estimate
$$
    \E_{\boldsymbol{\varepsilon}} \|\tilde X\|
    \gtrsim 
    2\brap{\E_{\boldsymbol{\varepsilon}} \|\tilde X\|^2}^{\frac{1}{2}}
    =
    \brap{\E_{\boldsymbol{\varepsilon}} \|\tilde X^\top\tilde X\|}^{\frac{1}{2}}+
    \brap{\E_{\boldsymbol{\varepsilon}} \|\tilde X\tilde X^\top\|}^{\frac{1}{2}}
    \geq
    {\textstyle\norm{\sum_{i} h_i^2 A_i^\top A_i}^{\frac{1}{2}} +
    \norm{\sum_{i} h_i^2 A_iA_i^\top}^{\frac{1}{2}}},
$$
where we used the Khintchine-Kahane inequality \cite{latala1994kahane} in the first step and Jensen's inequality in the last step. As the right-hand side is a convex function of $(|h_1|,\ldots,|h_m|)$, taking the expecation and applying Jensen's inequality
yields $\E\|X\| \gtrsim \|h\|_{L^1}\brap{\sigma_R(X)+\sigma_C(X)}$.

We now turn to the upper bound. The classical form of the noncommutative Khintchine inequality \cite[\S 9.8]{Pisier2003IntroductionTO} (and $\Tr[|M|^p]^{1/p}\lesssim\|M\|$ for any $d\times d$ matrix $M$ and $p\gtrsim\log d$) yields the upper bound in the special case that $h_i$ are Rademacher or standard Gaussian variables.
The subgaussian upper bound then follows as $\E\|X\|\lesssim \|h\|_{\psi_2}\E\|X_G\|$, where
$X_G=\sum_{i\in[m]}g_{i} A_{i}$ with $g_1,\ldots,g_m$ i.i.d.\ standard Gaussians,
by the subgaussian comparison theorem \cite[Corollary 8.6.3]{vershynin2018book}.

Alternatively, by symmetrizing as in the lower bound, we can estimate
\begin{align*}
    \E\norm{X} \le 2\,\E\|\tilde X\| &\lesssim
    \log(d)^{\frac{1}{2}} \brap{\textstyle
    \E\norm{\sum_{i} h_i^2 A_i^\top A_i}^{\frac{1}{2}} +
    \E\norm{\sum_{i} h_i^2 A_iA_i^\top}^{\frac{1}{2}}} \\
    & \le \E\brap{\max_{i\in[m]}|h_i|} \log(d)^{\frac{1}{2}} \brap{\sigma_R(X)+\sigma_C(X)}
\end{align*}
by applying the Rademacher form of NCK conditionally on $\boldsymbol{h}$.
It remains to note that we can estimate $\E\max_{i\in[m]}|h_i| \le (\E\max_{i\in[m]}|h_i|^p)^{1/p} \le (m\E|h|^p)^{1/p} \lesssim \|h\|_{L^p}$ for $p=\log m$.
\end{proof}

We can now complete the proof of Theorem~\ref{thm:iteratednck}.

\begin{proof}[\textbf{Proof of Theorem~\ref{thm:iteratednck}}]
    The proof is by induction on $q$. The base case $q = 1$ is given by Theorem~\ref{thm:nck}. For the induction step, let $q \geq 2$. We start with the lower bound.
    
    If we condition on $\vect{h}^{(1)}, \ldots, \vect{h}^{(q-1)}$, and treat $Y$ as a linear chaos, then applying the lower bound in NCK with respect to the randomness of $\vect{h}^{(\qwithcolor)}$ yields (see Figure~\ref{fig:branching})
    \begin{equation*}
        \E_{\vect{h}^{(\qwithcolor)}}\norm{Y} =
        \E_{\vect{h}^{(\qwithcolor)}}\norm{\interflatty{1:q}{q+1}{q+2}} 
        \gtrsim \|h\|_{L^1}\brap{
        \norm{\interflatty{1:q-1}{\qwithcolor,q+1}{q+2}} + \norm{\interflatty{1:q-1}{q+1}{\qwithcolor, q+2}}}.
    \end{equation*}
    Taking expectations and using the induction hypothesis yields
    \begin{equation*}
        \E\norm{Y} \qgtrsim \|h\|_{L^1}^q\brap{
        \sigma\brap{\interflatta{1:q-1}{\qwithcolor,q+1}{q+2}} \lor \sigma\brap{\interflatta{1:q-1}{q+1}{\qwithcolor, q+2}}},
    \end{equation*}
    and the conclusion follows from Lemma \ref{lem:finalmixedflattenings}.

    The proof of the upper bound follows similarly. The NCK upper bound yields
    \begin{equation*}
        \E_{\vect{h}^{(\qwithcolor)}}\norm{Y} 
        \lesssim \|h\|_{\psi_2}\log(d)^{\frac{1}{2}}\brap{ \norm{\interflatty{1:q-1}{\qwithcolor,q+1}{q+2}} + \norm{\interflatty{1:q-1}{q+1}{\qwithcolor, q+2}}}.
    \end{equation*}
    Taking expectations and using the induction hypothesis, we obtain
    \begin{equation*}
        \E\norm{Y} 
        \qlesssim \|h\|_{\psi_2}^q\log(d)^{\frac{1}{2}}\log(dm+m)^{\frac{q-1}{2}} \brap{
        \sigma\brap{\interflatta{1:q-1}{\qwithcolor,q+1}{q+2}} \lor \sigma\brap{\interflatta{1:q-1}{q+1}{\qwithcolor, q+2}} },
    \end{equation*}
    where we used that the largest dimension of the intermediate flattenings is at most $dm$. The conclusion follows from Lemma \ref{lem:finalmixedflattenings} and using $\log(d)^{\frac{1}{2}}\log(dm+m)^{\frac{q-1}{2}}\qlesssim \log(d+m)^{\frac{q}{2}}$.   
    The identical proof yields the variant of the upper bound where $\|h\|_{\psi_2}$ is replaced by $\|h\|_{L^{\log m}}$.
\end{proof}

\begin{remark}
\label{rem:nckwithc}
It is readily verified in the proof that the iterated NCK inequality also remains
valid if $\|h\|_{\psi_2}$ is replaced by $C_c\|h\|_{L^{c\log m}}$ for any $c>0$, where $C_c$ is a constant that depends on $c$ only. This variant will be used below in the proofs of the iterated Rosenthal inequalities.
\end{remark}

\subsection{Proof of iterated strong NCK}\label{sec:proofIterStrongNCK}

We start by stating the linear theorem.

\begin{theorem}[Strong Noncommutative Khintchine inequality]\label{thm:freenck}
    Let $X = \sum_{i\in[m]} h_{i} A_{i}$, where $h_1,\dots,h_m$ are i.i.d.\ copies of a centered random variable $h$ and $A_1,\ldots,A_m$ are $d_1\times d_2$ matrix coefficients (we define $d \coloneqq d_1 \lor d_2$). Then we have
    \begin{equation*}
        \E\norm{X} \lesssim \|h\|_{\psi_2}\brap{\sigma_R(X) + \sigma_C(X) + \log(d)^{\frac{3}{2}} v(X)}.
    \end{equation*}
\end{theorem}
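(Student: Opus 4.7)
The plan is to follow a two-step strategy that mirrors the proof of the classical NCK upper bound in Theorem~\ref{thm:nck}: first reduce to the Gaussian case via a subgaussian comparison argument, and then invoke a sharp matrix concentration inequality in the Gaussian regime as a black box. The three parameters $\sigma_R, \sigma_C, v$ all depend only on the deterministic matrix coefficients $A_i$, so the reduction preserves their values.

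For the reduction, I would let $X_G = \sum_{i \in [m]} g_i A_i$ with $g_1,\ldots,g_m$ i.i.d.\ standard Gaussians. The subgaussian comparison theorem \cite[Corollary 8.6.3]{vershynin2018book} yields $\E\|X\| \lesssim \|h\|_{\psi_2}\,\E\|X_G\|$, exactly as in the corresponding step of Theorem~\ref{thm:nck}. It therefore suffices to establish the inequality
\begin{equation*}
\E\|X_G\| \lesssim \sigma_R(X_G) + \sigma_C(X_G) + \log(d)^{3/2}\, v(X_G).
\end{equation*}

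For this purely Gaussian bound I would invoke the strong matrix concentration inequality of Bandeira--Boedihardjo--van Handel~\cite{bandeira2023matrix}, which compares $\E\|X_G\|$ to the operator norm of the free analogue $X_{\mathrm{free}} = \sum_i s_i \otimes A_i$ (with $s_1,\ldots,s_m$ a free semicircular family) up to an additive remainder of the form $\log(d)^{3/2}\, v(X_G)$. The operator norm of $X_{\mathrm{free}}$ can in turn be controlled via Haagerup's explicit free-probabilistic bound~\cite{Haagerup1993BoundedLO}, which yields $\|X_{\mathrm{free}}\| \le 2\max\{\sigma_R(X_G), \sigma_C(X_G)\}$. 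Combining the two estimates immediately produces the desired inequality.

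The main obstacle is the sharp Gaussian step itself: the comparison between $\E\|X_G\|$ and $\|X_{\mathrm{free}}\|$ with only a $v$-dependent remainder is the deep input, and its proof in~\cite{bandeira2023matrix} relies on a delicate interpolation between trace moments of $X_G$ and $X_{\mathrm{free}}$, together with a careful tracking of the combinatorics of noncrossing versus crossing pair partitions. In this proposal I would take this result as a black box, since by contrast the subgaussian reduction and the free-probabilistic bound on $\|X_{\mathrm{free}}\|$ are standard.
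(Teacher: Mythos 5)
Your proposal is correct and takes essentially the same route as the paper: reduce to the Gaussian case $X_G$ via the subgaussian comparison theorem (exactly as in the proof of Theorem~\ref{thm:nck}), then invoke the sharp Gaussian bound of \cite{bandeira2023matrix} as a black box. The only minor imprecision is in your description of the remainder term: the result from \cite{bandeira2023matrix} that the paper cites actually gives a \emph{cross} term of the form $\log(d)^{3/4}(\sigma_R(X)\vee\sigma_C(X))^{1/2}v(X)^{1/2}$, and the paper passes from this to the stated $\log(d)^{3/2}v(X)$ remainder by a final application of Young's inequality, absorbing the $\sigma^{1/2}$ half into the leading term $\sigma_R+\sigma_C$; your write-up presents the $\log(d)^{3/2}v(X)$ form as arriving directly from \cite{bandeira2023matrix}, skipping this small step. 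Apart from that, the structure is identical to the paper's proof.
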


\begin{proof}
We may estimate $\E\norm{X}\lesssim \|h\|_{\psi_2}\E\norm{X_G}$ as in the proof of Theorem \ref{thm:nck}. For the Gaussian random matrix $X_G$, applying \cite[Corollary 2.2 and Lemma 2.5]{bandeira2023matrix} yields
$$
    \E\norm{X_G} \lesssim \sigma_R(X) + \sigma_C(X) + \log(d)^{\frac{3}{4}}(\sigma_R(X)\vee\sigma_C(X))^{\frac{1}{2}}v(X)^{\frac{1}{2}},
$$
and the conclusion follows by applying Young's inequality to the last term.
\end{proof}

We can now complete the proof of Theorem~\ref{thm:iteratedfreenck}.

\begin{proof}[\textbf{Proof of Theorem~\ref{thm:iteratedfreenck}}]
    The proof is by induction on $q$. The base case $q = 1$ is given by Theorem~\ref{thm:freenck}. For the induction step, let $q \geq 2$. 
    If we condition on $\vect{h}^{(1)}, \ldots, \vect{h}^{(q-1)}$, and treat $Y$ as a linear chaos, then applying Theorem \ref{thm:freenck} with
    respect to $\vect{h}^{(\qwithcolor)}$ yields (see Figure~\ref{fig:branching})
    \begin{equation*}
        \E_{\vect{h}^{(\qwithcolor)}}\norm{Y} \lesssim 
        \|h\|_{\psi_2}\brap{\norm{\interflatty{1:q-1}{\qwithcolor,q+1}{q+2}} + \norm{\interflatty{1:q-1}{q+1}{\qwithcolor, q+2}} + \log(d)^{\frac{3}{2}} \norm{\interflatty{1:q-1}{\qwithcolor}{q+1, q+2}}}.
    \end{equation*}
    We now take the expectation and bound the norm of each intermediate flattening.
    For the first two terms, we use the induction hypothesis to estimate
    \begin{align*}
    &\E \norm{\interflatty{1:q-1}{\qwithcolor,q+1}{q+2}} \qlesssim
    \|h\|_{\psi_2}^{q-1}\brap{
    \sigma\brap{\interflatta{1:q-1}{\qwithcolor,q+1}{q+2}} +
        \log(dm+m)^{\frac{q+1}{2}} v\brap{\interflatta{1:q-1}{\qwithcolor,q+1}{q+2}}},
        \\
    &\E \norm{\interflatty{1:q-1}{q+1}{\qwithcolor,q+2}} \qlesssim
    \|h\|_{\psi_2}^{q-1}\brap{
    \sigma\brap{\interflatta{1:q-1}{q+1}{\qwithcolor,q+2}} +
        \log(dm+m)^{\frac{q+1}{2}} v\brap{\interflatta{1:q-1}{q+1}{\qwithcolor,q+2}}}.
    \end{align*}
    For the last term, we use the iterated NCK inequality (Theorem~\ref{thm:iteratednck}) to estimate
    \begin{equation*}
        \E \norm{\interflatty{1:q-1}{\qwithcolor}{q+1, q+2}} \qlesssim
        \|h\|_{\psi_2}^{q-1}
        \log(d^2\vee m+m)^{\frac{q-1}{2}}
        \sigma\brap{\interflatta{1:q-1}{\qwithcolor}{q+1, q+2}}.
    \end{equation*} 
    Here we used that the largest dimension of the first two intermediate flattenings is at most $dm$ and of the last intermediate flattening is at most $d^2\vee m$.
    To conclude, it remains to apply Lemma~\ref{lem:finalmixedflattenings}, and to  note that all flattenings that appear in the leading terms are of $\sigma$-type and that all flattenings that appear in the terms with logarithmic factors are of $v$-type.
\end{proof}

\subsection{Proof of iterated Rosenthal inequality}\label{sec:proofIteratedRosenthal}

We begin by stating the linear theorem. The upper bound follows from the matrix Rosenthal inequality that may be found in \cite{junge2013,Mackey_2014} (see also \cite[Example~2.15]{Brailovskaya2022UniversalityAS}). We were unable to locate a reference
for the lower bound.

\begin{theorem}[Matrix Rosenthal inequality]\label{thm:rosenthal}
    Let $X = \sum_{i\in[m]} h_{i} A_{i}$ where $h_1,\dots,h_m$ are i.i.d.\ copies of a centered unit-variance random variable $h$, and $A_i$ are $d_1\times d_2$ matrix coefficients (set $d \coloneqq d_1 \lor d_2$). Let $\alpha_c(h) \coloneqq \|h\|_{L^{c\log(d+m)}}$ for $c>0$. Then we have
    \begin{equation*}
        \E\norm{X} \lesssim_c \log(d+m)^{\frac{1}{2}} \brap{\sigma_R(X)+\sigma_C(X)} + \alpha_c(h) \log(d+m) \,r(X)
    \end{equation*}
and
    \begin{equation*}
        \E\norm{X} \gtrsim
    \sigma_R(X)+\sigma_C(X) - C_c \alpha_c(h) \log(d+m)^{\frac{1}{2}} \,r(X),
    \end{equation*}
    where $C_c$ is a constant that depends only on $c$.
\end{theorem}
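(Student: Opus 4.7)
The upper bound is essentially a direct application of the matrix Rosenthal inequality from \cite{junge2013,Mackey_2014} (see also \cite[Example~2.15]{Brailovskaya2022UniversalityAS}). Concretely, applying that inequality in its moment form to the Hermitian dilation $\tilde X = \bigl(\begin{smallmatrix}0 & X \\ X^\top & 0\end{smallmatrix}\bigr) = \sum_i h_i \tilde A_i$, one has $\|\tilde X\|=\|X\|$, matrix variance $\|\sum_i \E(h_i\tilde A_i)^2\| = \sigma_R^2 \vee \sigma_C^2$, and $\|h_i\tilde A_i\|_{L^{2p}} \le \|h\|_{L^{2p}} r(X)$. Choosing $p \asymp c\log(d+m)$ and using the Jensen estimate $\E\|X\| \le (\E\|X\|^{2p})^{1/(2p)}$ then immediately yields the upper bound.

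The more delicate part is the lower bound. My plan is to compare $X$ to the Gaussian surrogate $X_G := \sum_i g_i A_i$ with $g_i \sim \mathcal N(0,1)$ iid. Since $\E g^2 = \E h^2 = 1$, the flattening parameters $\sigma_R, \sigma_C$ are identical for $X$ and $X_G$. Applying the lower bound in Theorem~\ref{thm:nck} to $X_G$, together with $\|g\|_{L^1} \asymp 1$, gives $\E\|X_G\| \gtrsim \sigma_R(X) + \sigma_C(X)$ unconditionally. It therefore suffices to establish the universality gap
\[
\bigl|\E\|X\| - \E\|X_G\|\bigr| \lesssim_c \alpha_c(h)\log(d+m)^{1/2}\, r(X),
\]
after which the triangle inequality $\E\|X\| \ge \E\|X_G\| - |\E\|X\| - \E\|X_G\||$ completes the proof. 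This universality gap can be established by a matrix Lindeberg-type replacement: interpolate $g_i \leadsto h_i$ one coordinate at a time, approximate $\|\cdot\|$ by a resolvent-smoothed functional in the spirit of \cite{Brailovskaya2022UniversalityAS}, Taylor-expand, and use the matching of the first two moments of $g$ and $h$ to annihilate the leading-order terms of the expansion.

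The main obstacle is achieving the sharp \emph{linear} dependence on $\alpha_c(h)$ in the universality gap. A direct variance-based argument, starting from $\E\|X\|^2 \ge \sigma_R^2 \vee \sigma_C^2$ and bounding $\E\|X^\top X - \E X^\top X\|$ by splitting it into a diagonal linear piece $\sum_i(h_i^2-1)A_i^\top A_i$ and an off-diagonal bilinear piece $\sum_{i\ne j}h_ih_j A_i^\top A_j$ and then invoking the upper bound just proved, produces only an $\alpha_c(h)^2$ remainder; likewise a naive polynomial-smoothing Lindeberg expansion gives a cubic dependence. Recovering the linear $\alpha_c(h)$ will require the refined smoothing and tail-truncation techniques developed in \cite{Brailovskaya2022UniversalityAS}, which carefully track how heavy-tailed contributions interact with the matrix structure, in particular ensuring that the residual higher-moment terms are absorbed into the $r(X)\log(d+m)^{1/2}$ remainder rather than the variance term.
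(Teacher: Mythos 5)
The upper bound in your proposal is essentially identical to the paper's: apply the matrix Rosenthal inequality from \cite{junge2013,Mackey_2014} (or \cite[Example~2.15]{Brailovskaya2022UniversalityAS}) via the Hermitian dilation, reading off the parameters, with $2p\asymp c\log(d+m)$.

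For the lower bound, your universality-based route is genuinely different from the paper's, but it is not complete and has a real gap: the crucial estimate
\begin{equation*}
\bigl|\E\|X\|-\E\|X_G\|\bigr| \lesssim_c \alpha_c(h)\log(d+m)^{1/2}\,r(X)
\end{equation*}
is nowhere established, and, as you yourself acknowledge, the comparison theorems in \cite{Brailovskaya2022UniversalityAS} do not obviously deliver the linear power of $\alpha_c(h)$ nor this precise combination of parameters --- the error terms there arise from fourth-and-higher moment differences, which naively produce higher powers. Deferring the hardest step means the proof is incomplete.

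The paper's actual argument is much simpler and is close in spirit to the ``direct variance-based argument'' you consider and dismiss, but with two adjustments that resolve exactly the difficulties you ran into. First, the paper never confronts the Gram matrix $X^\top X$ with its troublesome off-diagonal bilinear piece $\sum_{i\neq j}h_ih_j A_i^\top A_j$; instead it symmetrizes and applies the Khintchine--Kahane inequality in the auxiliary Rademacher variables, exactly as in the lower-bound proof of Theorem~\ref{thm:nck}, to obtain directly
\begin{equation*}
\E\|X\| \gtrsim \E\Bigl\|\sum_{i} h_i^2 A_i^\top A_i\Bigr\|^{1/2} + \E\Bigl\|\sum_{i} h_i^2 A_iA_i^\top\Bigr\|^{1/2}.
\end{equation*}
(The symmetrization is precisely what converts the second-moment inequality $\E\|X\|^2 \geq \sigma_R(X)^2\vee\sigma_C(X)^2$ you start from into a first-moment lower bound, while simultaneously isolating only the diagonal quadratic form.) Second, by the reverse triangle inequality, the elementary bound $\sqrt{b}-\sqrt{\lvert a-b\rvert}\leq\sqrt{a}$, and Jensen,
\begin{equation*}
\E\Bigl\|\sum_{i} h_i^2 A_i^\top A_i\Bigr\|^{1/2} \geq \sigma_R(X) - \Bigl(\E\Bigl\|\sum_{i}\bigl(h_i^2-1\bigr) A_i^\top A_i\Bigr\|\Bigr)^{1/2},
\end{equation*}
and the subtracted term is a fresh \emph{linear} matrix chaos to which the upper bound just proved applies (with coefficients $A_i^\top A_i$, controlled by $\|\sum_i(A_i^\top A_i)^2\|\le\sigma_R(X)^2r(X)^2$). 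That upper bound produces an $\alpha_c(h)^2\log(d+m)\,r(X)^2$ contribution, but it enters \emph{under the square root}, which is exactly what yields the linear $\alpha_c(h)$ in the final estimate; a Young inequality step then absorbs the cross term into $\sigma_R(X)+\sigma_C(X)$. Your universality route has no analogous square-root mechanism, which is why you could not see how to recover the correct power of $\alpha_c(h)$.
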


\begin{proof}
The upper bound follows by applying \cite[Example~2.15 and Remark~2.1]{Brailovskaya2022UniversalityAS} with $2p=\lfloor c\log(d+m)\rfloor$ (and $\Tr[|M|^p]^{1/p}\lesssim\|M\|$ for any $d\times d$ matrix $M$ and $p\gtrsim\log d$).

For the lower bound, we begin by estimating 
\begin{align*}
    \E\|X\|
    &\gtrsim
    {\textstyle \E\norm{\sum_{i} h_i^2 A_i^\top A_i}^{\frac{1}{2}} +
    \E\norm{\sum_{i} h_i^2 A_iA_i^\top}^{\frac{1}{2}}}
     \\ &   \geq
        \sigma_R(X)+\sigma_C(X) -
        {\textstyle\E\norm{\sum_{i} (h_i^2-1) A_i^\top A_i}^{\frac{1}{2}} -
    \E\norm{\sum_{i} (h_i^2-1) A_iA_i^\top}^{\frac{1}{2}}},
\end{align*}
where the first line follows from the proof of Theorem \ref{thm:nck} and the second line uses the triangle inequality. We can now apply the matrix Rosenthal upper bound to estimate
$$
    {\textstyle\E\norm{\sum_{i} (h_i^2-1) A_i^\top A_i}
        \lesssim_c
        \log(d+m)^{\frac{1}{2}} \sigma_R(X)r(X)
        + \alpha_c(h)^2 \log(d+m) r(X)^2,
        }
$$
where we used $\|\sum_{i} (A_i^\top A_i)^2\|\le \sigma_R(X)^2r(X)^2$. 
Estimating the remaining term similarly, we obtain
$$
    \E\|X\| \gtrsim \sigma_R(X)+\sigma_C(X) - C_c
    \log(d+m)^{\frac{1}{4}} (\sigma_R(X)+\sigma_C(X))^{\frac{1}{2}}r(X)^{\frac{1}{2}}
    - C_c    \alpha_c(h) \log(d+m)^{\frac{1}{2}} r(X)
$$
and the conclusion follows by Young's inequality.
\end{proof}

We can now complete the proof of Theorem~\ref{thm:iteratedrose}.

\begin{proof}[\textbf{Proof of Theorem~\ref{thm:iteratedrose}}]
We first prove the upper bound. We aim to prove by induction on $q$ that
$$
        \E\norm{Y} \lesssim_{q,c} \log(d+m)^{\frac{q}{2}} \sigma(\AA)+ \alpha_c(h)^q \log(d+m)^{\frac{q+1}{2}}r(\AA),
$$
for every $c>0$, from which the conclusion follows by choosing $c=1$.

The base case $q = 1$ is given by Theorem~\ref{thm:rosenthal}. For the induction step, let $q \geq 2$. If we condition on $\vect{h}^{(1)}, \ldots, \vect{h}^{(q-1)}$, then applying Theorem \ref{thm:rosenthal} with
    respect to $\vect{h}^{(\qwithcolor)}$ yields (see Figure~\ref{fig:branching})
    \begin{align*}
        &\E_{\vect{h}^{(\qwithcolor)}}\norm{Y} \lesssim_c  \log(d + m)^{\frac{1}{2}}\brap{\norm{\interflatty{1:q-1}{\qwithcolor,q+1}{q+2}} +  \norm{\interflatty{1:q-1}{q+1}{\qwithcolor, q+2}}} \\
        &\qquad\qquad\qquad + \alpha_c(h) \log(d + m) \norm{\interflatty{1:q-1}{\qwithcolor, q+1}{\qwithcolor, q+2}}.
    \end{align*}
Now note that the largest dimension of the intermediate flattenings that appear above
is $md$. As $\frac{c}{2}\log(md+m)\le c\log(d+m)$, the induction hypothesis with $c\leftarrow \frac{c}{2}$ and Lemma~\ref{lem:finalmixedflattenings} yield
\begin{align*}
\E\norm{\interflatty{1:q-1}{\qwithcolor,q+1}{q+2}} &\lesssim_{q,c}
\log(md+m)^{\frac{q-1}{2}} \sigma\brap{\AA} +
\alpha_c(h)^{q-1}\log(md+m)^{\frac{q}{2}} r\brap{\AA}, \\
\E\norm{\interflatty{1:q-1}{q+1}{\qwithcolor, q+2}} &\lesssim_{q,c}
\log(md+m)^{\frac{q-1}{2}} \sigma\brap{\AA} +
\alpha_c(h)^{q-1}\log(md+m)^{\frac{q}{2}} r\brap{\AA}.
\end{align*}
On the other hand, the iterated NCK inequality (Theorem \ref{thm:iteratednck} and
Remark \ref{rem:nckwithc}) yields
$$
    \EE\norm{\interflatty{1:q-1}{\qwithcolor, q+1}{\qwithcolor, q+2}}
    \lesssim_{q,c} \alpha_c(h)^{q-1}\log(md+m)^{\frac{q-1}{2}}
    \sigma\brap{\interflatta{1:q-1}{\qwithcolor, q+1}{\qwithcolor, q+2}}.
$$
Using again Lemma~\ref{lem:finalmixedflattenings} yields $\sigma(\interflatta{1:q-1}{\qwithcolor, q+1}{\qwithcolor, q+2})\le r(\AA)$. The proof of the upper bound is readily concluded by combining the above estimates.

The proof of the lower bound is very similar. We first estimate
$$
        \E_{\vect{h}^{(\qwithcolor)}}\norm{Y} \gtrsim  \norm{\interflatty{1:q-1}{\qwithcolor,q+1}{q+2}} +  \norm{\interflatty{1:q-1}{q+1}{\qwithcolor, q+2}}
         - C_c\alpha_c(h) \log(d + m)^{\frac{1}{2}} \norm{\interflatty{1:q-1}{\qwithcolor, q+1}{\qwithcolor, q+2}}
$$
using Theorem \ref{thm:rosenthal}. The proof is concluded by lower bounding the expectation of the first two terms using the induction hypothesis and Lemma \ref{lem:finalmixedflattenings}, and bounding the expectation of the last term by the iterated NCK inequality as in the upper bound.
\end{proof}

\subsection{Proof of iterated strong matrix Rosenthal inequality}\label{sec:proof:thm:iteratedstrongrose}

We first state the linear theorem.

\begin{theorem}[Strong matrix Rosenthal inequality]\label{thm:linearuniversalityrosenthal}
    Let $X = \sum_{i\in[m]} h_{i} A_{i}$ where $h_1,\dots,h_m$ are i.i.d.\ copies of a centered unit-variance random variable $h$, and $A_i$ are $d_1\times d_2$ matrix coefficients (set $d \coloneqq d_1 \lor d_2$). Let $\alpha_c(h) \coloneqq \|h\|_{L^{c\log(d+m)}}$ for $c>0$. Then we have
    \begin{equation*}
        \E \norm{X} \lesssim_c \sigma_R(X)+\sigma_C(X) + \alpha_c(h) \log(d+m)^2 v(X).
    \end{equation*}
\end{theorem}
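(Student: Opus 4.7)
The plan is to reduce to the Rademacher case by symmetrization, apply strong NCK (Theorem~\ref{thm:freenck}) conditionally on $\vect{h}$, and then control the resulting random norms by applying the ordinary matrix Rosenthal inequality (Theorem~\ref{thm:rosenthal}) to the centered quadratic variables $h_i^2 - 1$.

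Concretely, I would introduce i.i.d.\ Rademacher variables $\varepsilon_1,\ldots,\varepsilon_m$ independent of $h_1,\ldots,h_m$ and set $\tilde X = \sum_i \varepsilon_i h_i A_i$. Standard symmetrization gives $\E\|X\| \lesssim \E\|\tilde X\|$. Conditionally on $\vect{h}$, the random matrix $\tilde X$ is a Rademacher sum with coefficients $h_i A_i$, so applying Theorem~\ref{thm:freenck} yields
\begin{equation*}
    \E[\,\|\tilde X\| \mid \vect{h}\,] \lesssim \Bigl\|\sum_i h_i^2 A_i^\top A_i\Bigr\|^{1/2} + \Bigl\|\sum_i h_i^2 A_i A_i^\top\Bigr\|^{1/2} + \log(d)^{3/2} \Bigl\|\sum_i h_i^2 \vec(A_i)\vec(A_i)^\top\Bigr\|^{1/2}.
\end{equation*}

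To take the expectation over $\vect{h}$, I would write $h_i^2 = 1 + (h_i^2 - 1)$ and apply the triangle inequality, reducing matters to controlling $\E\|\sum_i (h_i^2-1) M_i\|$ for each $M_i \in \{A_i^\top A_i,\, A_i A_i^\top,\, \vec(A_i)\vec(A_i)^\top\}$. Since $\{(h_i^2-1)M_i\}$ are independent centered matrices whose $L^{c\log(d+m)}$ moments are controlled by $\alpha_c(h)^2$, an application of the matrix Rosenthal inequality (Theorem~\ref{thm:rosenthal}) gives
\begin{equation*}
    \E\Bigl\|\sum_i (h_i^2-1)M_i\Bigr\| \lesssim_c \alpha_c(h)^2 \log(d+m)^{1/2} \Bigl\|\sum_i M_i^2\Bigr\|^{1/2} + \alpha_c(h)^2 \log(d+m)\,\max_i \|M_i\|.
\end{equation*}

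For $M_i = A_i^\top A_i$ one has $\|\sum_i M_i^2\|^{1/2} \le r(X)\sigma_R(X)$ and $\max_i\|M_i\| = r(X)^2$, and analogously for $M_i = A_i A_i^\top$; for $M_i = \vec(A_i)\vec(A_i)^\top$, the rank-one structure together with $\max_i\|A_i\|_F^2 \le v(X)^2$ (which is immediate from the definition of $v$) gives both $\|\sum_i M_i^2\|^{1/2} \le v(X)^2$ and $\max_i\|M_i\| \le v(X)^2$. Substituting these estimates, taking square roots, applying Young's inequality to absorb $\sqrt{r\sigma_R}$-type terms into $\sigma_R$, and finally invoking $r(X) \le v(X)$ from Lemma~\ref{lem:comparison_v_r} yields the stated bound: the $\log(d+m)^2$ exponent arises precisely as the product of the $\log^{3/2}$ coming from the $v$-piece of strong NCK and the extra $\log^{1/2}$ coming from the Rosenthal bound on $\sum_i (h_i^2-1)\vec(A_i)\vec(A_i)^\top$. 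The main obstacle will be careful book-keeping around the unit-variance rescaling of $h_i^2-1$ and the tuning of Young's inequality, so that all $\alpha$- and log-powers collapse exactly into the single $\alpha_c(h)\log(d+m)^2\, v(X)$ term without introducing spurious lower-order contributions.
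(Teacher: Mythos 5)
Your route is genuinely different from the paper's: the paper simply cites the universality principle of Brailovskaya--van Handel together with the intrinsic-freeness bound of \cite{bandeira2023matrix}, which directly yields $\E\norm{X}\lesssim_c \sigma_R+\sigma_C+\log(d+m)^{3/4}(\sigma_R\vee\sigma_C)^{1/2}v^{1/2}+\alpha_c(h)\log(d+m)^2 r(X)$ with a \emph{single} power of $\alpha_c(h)$, and then finishes with Young and $r\le v$. Your self-contained alternative (symmetrize, apply strong NCK conditionally on $\vect{h}$, control the random parameters via scalar Rosenthal applied to $h_i^2-1$) is a natural idea, and your treatment of the $v$-piece is correct: there the product $\log^{3/2}\cdot\log^{1/2}$ does give $\alpha_c(h)\log^2 v(X)$.

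However, there is a genuine gap in the $\sigma_R$ and $\sigma_C$ pieces. When you apply Rosenthal to $\sum_i(h_i^2-1)A_i^\top A_i$, the relevant scalar variable is $h_i^2-1$, whose $L^{c\log(d+m)}$-norm and standard deviation are both of order $\alpha_c(h)^2$, not $\alpha_c(h)$. The cross term you propose to absorb by Young's inequality is
$\brap{\alpha_c(h)^2\log(d+m)^{1/2}\sigma_R(X)r(X)}^{1/2}$, and any split of this product leaves behind a term of order $\alpha_c(h)^2\log(d+m)^{1/2}r(X)$ (unless you inflate the coefficient of $\sigma_R$). Since $r\le v$ is the only available comparison, absorbing this into $\alpha_c(h)\log(d+m)^2v(X)$ requires $\alpha_c(h)\lesssim\log(d+m)^{3/2}$, which fails precisely in the sparse/heavy-tailed regime the Rosenthal-type inequalities are designed for (e.g., standardized $\mathrm{Bern}(p)$ with small $p$, where $\alpha_c(h)\approx p^{-1/2}$). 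A concrete check: for $A_i=e_ie_i^\top$, $i\in[d]$, and standardized $\mathrm{Bern}(p)$ entries, the theorem gives $\E\norm{X}\lesssim 1+p^{-1/2}\log^2 d$ (near-sharp, since $\norm{X}=\max_i|h_i|\approx p^{-1/2}$), whereas your argument only gives $p^{-1}\log^{1/2}d$, polynomially worse. The squaring of the randomness inherent in conditioning on $\vect{h}$ is exactly the obstruction that the universality-based proof avoids, and I do not see how to repair your approach without importing a result of that strength.
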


\begin{proof}
Combining
\cite[Theorem~2.9~and~Remark~2.1]{Brailovskaya2022UniversalityAS} and
\cite[Theorem~2.7~and~Lemma~2.5]{bandeira2023matrix} with $2p=\lfloor c\log(d+m)\rfloor$ (and $\Tr[|M|^p]^{1/p}\lesssim\|M\|$ for any $d\times d$ matrix $M$ and $p\gtrsim\log d$)
yields
$$
\E\norm{X} \lesssim_c \sigma_R(X)+\sigma_C(X)+\log(d+m)^{\frac{3}{4}}(\sigma_R(X)\vee\sigma_C(X))^{\frac{1}{2}}v(X)^{\frac{1}{2}} +
\alpha_c(h)\log(d+m)^2r(X).
$$
The conclusion follows using Young's inequality and as $r(X)\le v(X)$ (Lemma~\ref{lem:comparison_v_r}).
\end{proof}

We can now complete the proof of Theorem~\ref{thm:iteratedstrongrose}.

\begin{proof}[\textbf{Proof of Theorem~\ref{thm:iteratedstrongrose}}]
We aim to prove by induction on $q$ that
$$
        \E\norm{Y} \lesssim_{q,c} \sigma(\AA)+ \alpha_c(h)^q \log(d+m)^{\frac{q+3}{2}}v(\AA),
$$
for every $c>0$, from which the conclusion follows by choosing $c=1$.

The base case $q = 1$ is given by Theorem~\ref{thm:linearuniversalityrosenthal}. For the induction step, let $q \geq 2$. If we condition on $\vect{h}^{(1)}, \ldots, \vect{h}^{(q-1)}$, then applying Theorem \ref{thm:linearuniversalityrosenthal} with
    respect to $\vect{h}^{(\qwithcolor)}$ yields (see Figure~\ref{fig:branching})
$$
        \E_{\vect{h}^{(\qwithcolor)}}\norm{Y} \lesssim_c  \norm{\interflatty{1:q-1}{\qwithcolor,q+1}{q+2}} + \norm{\interflatty{1:q-1}{q+1}{\qwithcolor, q+2}} + \alpha_c(h)\log(d + m)^2 \norm{\interflatty{1:q-1}{\qwithcolor}{q+1, q+2}}.
$$
As in the proof of Theorem \ref{thm:iteratedrose}, the induction hypothesis with $c\leftarrow \frac{c}{2}$ and Lemma~\ref{lem:finalmixedflattenings} yield
\begin{align*}
    \E\norm{\interflatty{1:q-1}{\qwithcolor,q+1}{q+2}} &\lesssim_{q,c}
    \sigma(\AA)+ \alpha_c(h)^{q-1} \log(md+m)^{\frac{q+2}{2}}v(\AA),\\
    \E\norm{\interflatty{1:q-1}{q+1}{\qwithcolor, q+2}} &\lesssim_{q,c}
    \sigma(\AA)+ \alpha_c(h)^{q-1} \log(md+m)^{\frac{q+2}{2}}v(\AA).
\end{align*}
On the other hand, the iterated NCK inequality (Theorem \ref{thm:iteratednck} and
Remark \ref{rem:nckwithc}) yields
$$
    \EE\norm{\interflatty{1:q-1}{\qwithcolor}{q+1, q+2}}
    \lesssim_{q,c} \alpha_c(h)^{q-1}\log(md+m)^{\frac{q-1}{2}}
    \sigma\brap{\interflatta{1:q-1}{\qwithcolor}{q+1, q+2}}.
$$
Using again Lemma~\ref{lem:finalmixedflattenings} yields $\sigma(\interflatty{1:q-1}{\qwithcolor}{q+1, q+2})\le v(\AA)$, concluding the proof.
\end{proof}

\begin{remark}
In the strong matrix Rosenthal inequality that appears in the proof of Theorem \ref{thm:linearuniversalityrosenthal}, the distributional parameter $\alpha_c(h)$ appears
only in the term that is controlled by $r(X)$. We simplified this inequality by estimating
$r(X)\le v(X)$. This simplification can be lossy when $r(X)\ll v(X)$, particularly in sparse situations when the parameter $\alpha_c(h)$ may be very large.

One may hope that exploiting the sharper form of the strong matrix Rosenthal inequality   could give rise to an improved form of Theorem \ref{thm:iteratedstrongrose} where the distributional parameter appears only in a term controlled by $r(\AA)$. It is not possible to iterate the sharper inequality, however,
as it is not true in general that $r(\interflatta{1:q-1}{\qwithcolor}{q+1, q+2})$ can be controlled by $r(\AA)$. 

It is possible to obtain improved chaos inequalities by introducing additional chaos parameters that control such terms, but we do not at present know of a compelling application of such inequalities.
\end{remark}

\subsection{Norms of flattenings}\label{sec:proof:prop:flattenings_combinatorial}

We first consider chaoses of combinatorial type.

\begin{proof}[\textbf{Proof of Proposition~\ref{prop:flattenings_combinatorial}}]
    Given a chaos of combinatorial type~\eqref{eq:chaoscombinatorialtype}, its  flattenings~\eqref{eq:flattenings} are 
    \begin{equation*}
        \flatta{R}{C} = \sum_{\s\in[S_1]\times\cdots\times[S_p]}  \brap{\bigotimes_{t \in R} e_{I_t(\s)}} \otimes \brap{\bigotimes_{t \in C} e_{I_t(\s)}^\adj}.
    \end{equation*}
    Using the natural identification $e_{I_t(\s)} \simeq e_{I_t(\s)_1} \otimes \cdots \otimes e_{I_t(\s)_{\abs{I_t}}}$ and permuting the order of tensor products (which corresponds to reordering rows and columns), we obtain
    \begin{equation}\label{eq:flattenings_expressed}
        \flatta{R}{C} \simeq \sum_{\s\in[S_1]\times\cdots\times[S_p]} \bigotimes_{u \in [p]} \brap{e_{s_u}^{\otimes \mu_u} \otimes (e_{s_u}^\adj)^{\otimes \nu_u}},
    \end{equation}
    where $\mu_u$ and $\nu_u$ denote the number of times the summation index $s_u$ appears as a row or column index respectively in the tensor product. Distributivity yields
    \begin{equation*}
        \flatta{R}{C} \simeq \bigotimes_{u \in [p]} B_u\qquad\text{with}\qquad
        B_u =  \sum_{s \in [S_u]} \brap{e_{s}^{\otimes \mu_u} \otimes (e_{s}^\adj)^{\otimes \nu_u}}.
    \end{equation*}
In particular, we have
    \begin{equation*}
        \norm{\flatta{R}{C}} = \prod_{u \in [p]} \norm{B_u}.
    \end{equation*}
Now note that, by definition, $\mu_u=0$ if and only if $u\in\RR^c$, and $\nu_u=0$ if and only if $u\in\CC^c$. We can therefore compute using \eqref{eq:mu_nu_cases} that
$\norm{B_u}=(\sqrt{S_u})^{1_{u\in\RR^c}+1_{u\in\CC^c}}$, and the conclusion follows.
\end{proof}

To proceed, we need the following.

\begin{lemma}\label{lem:comparison}
    If $M,M'$ are (real) matrices so that $\abs{M_{i,j}}\leq M_{i,j}'$ for all $i,j$,
    then $\norm{M}\le\norm{M'}$.
\end{lemma}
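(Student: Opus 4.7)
The plan is to use the variational characterization of the spectral norm together with the sign-free domination provided by the hypothesis. Recall that for any real matrix $N$, one has
\begin{equation*}
    \norm{N} = \max_{\norm{x}=\norm{y}=1} \sum_{i,j} N_{i,j}\, x_i\, y_j,
\end{equation*}
where the maximum runs over real unit vectors. I would begin from this identity for $M$ and pick unit vectors $x,y$ achieving the maximum in $\norm{M}$.

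Next, I would apply the triangle inequality entrywise:
\begin{equation*}
    \norm{M} = \sum_{i,j} M_{i,j}\, x_i\, y_j \le \sum_{i,j} \abs{M_{i,j}}\, \abs{x_i}\, \abs{y_j} \le \sum_{i,j} M'_{i,j}\, \abs{x_i}\, \abs{y_j},
\end{equation*}
where the second inequality uses the hypothesis $\abs{M_{i,j}} \le M'_{i,j}$ (note that this automatically forces $M'_{i,j}\ge 0$, so no sign issues arise when combining it with the nonnegative weights $\abs{x_i}\abs{y_j}$). The key point is that, by taking absolute values of the coordinates of $x$ and $y$, we obtain new vectors $\tilde{x} = (\abs{x_i})_i$ and $\tilde{y} = (\abs{y_j})_j$ which are still unit vectors in $\ell^2$.

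Finally, I would conclude by applying the variational characterization to $M'$ with the test vectors $\tilde{x},\tilde{y}$:
\begin{equation*}
    \sum_{i,j} M'_{i,j}\, \abs{x_i}\, \abs{y_j} = \sum_{i,j} M'_{i,j}\, \tilde{x}_i\, \tilde{y}_j \le \max_{\norm{u}=\norm{v}=1} \sum_{i,j} M'_{i,j}\, u_i\, v_j = \norm{M'}.
\end{equation*}
Chaining the inequalities gives $\norm{M}\le\norm{M'}$. There is no real obstacle here; the only subtlety is making sure one passes to absolute values of the test vectors before invoking the variational formula for $M'$, since otherwise cancellation in sums of mixed-sign entries of $M'$ could in principle decrease the bilinear form and invalidate the comparison.
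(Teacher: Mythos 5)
Your proof is correct and follows essentially the same route as the paper's: use the bilinear variational characterization of the operator norm, pass to absolute values of the optimal test vectors (which remain unit vectors), and apply the entrywise domination hypothesis. The paper's version is just a one-line compressed form of the same argument.
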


\begin{proof}
Note that $\|M\|=\sup_{\|x\|=\|y\|=1}\sum_{i,j} x_i M_{i,j} x_j\le
\sup_{\|x\|=\|y\|=1}\sum_{i,j} |x_i| |M_{i,j}| |x_j| \le \|M'\|$.
\end{proof}

We can now extend the bound to chaoses of nearly combinatorial type.

\begin{proof}[\textbf{Proof of Proposition~\ref{prop:flattenings_nearly_combinatorial}}]
    Given a chaos of nearly combinatorial type, following the same steps as in the proof of Proposition~\ref{prop:flattenings_combinatorial}, we obtain an analogue of~\eqref{eq:flattenings_expressed}:
    \begin{equation*}
        \flatta{R}{C}^f \simeq \sum_{\s} f(\s) \bigotimes_{u \in [p]} \brap{e_{s_u}^{\otimes \mu_u} \otimes (e_{s_u}^\adj)^{\otimes \nu_u}}.
    \end{equation*}
    Define the associated flattening of combinatorial type by replacing $f\leftarrow 1$:
    \begin{equation*}
        \flatta{R}{C} \simeq \sum_{\s} \bigotimes_{u \in [p]} \brap{e_{s_u}^{\otimes \mu_u} \otimes (e_{s_u}^\adj)^{\otimes \nu_u}} .
    \end{equation*}
    Lemma~\ref{lem:comparison} yields
    $\|\flatta{R}{C}^f\| \le \norm{f}_\infty \|\flatta{R}{C}\|$, and we conclude using  Proposition~\ref{prop:flattenings_combinatorial}.
\end{proof}

\begin{remark}[Analogue of Proposition~\ref{prop:flattenings_nearly_combinatorial} for intermediate flattenings]\label{rmk:interflatt_nearcombinatorial}
    Consider a chaos $Y$ of nearly combinatorial type, and let $\interflatty{Z}{R}{C}$ be an intermediate flattening as in~\eqref{eq:intermediateflattenings}. Then
    \begin{equation*}
        \interflatty{Z}{R}{C} \simeq \sum_{\s} f(\s) \brap{\prod_{t \in Z}h_{I_t(s)}^{(t)}} \bigotimes_{u \in [p]} \brap{e_{s_u}^{\otimes \mu_u} \otimes (e_{s_u}^\adj)^{\otimes \nu_u}}.
    \end{equation*}
    If $h$ is uniformly bounded, then we can argue as in the proof of  Proposition~\ref{prop:flattenings_nearly_combinatorial} that
    $$
        \norm{\interflatty{Z}{R}{C}} \le \norm{f}_\infty \norm{h}_\infty^{|Z|}\norm{\flatta{R}{C}}
        \le
        \norm{f}_\infty \norm{h}_\infty^{|Z|}
        \brap{\prod_{u \in \RR^c}\sqrt{S_u}} \brap{\prod_{u \in \CC^c}\sqrt{S_u}},
    $$
    where we used Proposition \ref{prop:flattenings_combinatorial} in the second inequality.

    Note that while the definitions of the parameters $\sigma(\AA),v(\AA),r(\AA)$ only involved flattenings $\flatta{R}{C}$ with $R\cup C=[q+2]$, this need not be the case
    when considering intermediate flattenings. This is not a problem, as neither the
    definitions nor the arguments in the proof rely on this assumption.
\end{remark}

\subsection{Menger's theorem}

The following classical result is used in Section \ref{sec:graphmatrices}.

\begin{theorem}[Menger's theorem, \cite{goring2002menger}]\label{thm:menger}
    Let $G$ be a finite graph and $U, V \subseteq V(G)$ be two subsets of vertices. We say $S$ is a $U$---\,$V$ vertex separator if all paths from $U$ to $V$ pass through $S$. Then the minimal size of a $U$---\,$V$ vertex separator equals the maximal number of vertex-disjoint paths from $U$ to $V$ that contain exactly one point in $U$ and one point in $V$.
\end{theorem}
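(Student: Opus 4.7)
The plan is to deduce the theorem from the max-flow min-cut theorem applied to a directed auxiliary network constructed by vertex-splitting. First, I would dispense with the easy inequality: if $P_1,\ldots,P_k$ are vertex-disjoint $U$-$V$ paths, each containing exactly one vertex in $U$ and one in $V$, then any $U$-$V$ vertex separator $S$ must contain at least one vertex of each $P_i$ (otherwise $P_i$ itself, including its $U$-endpoint, would be an uncut $U$-$V$ path), and by vertex-disjointness $\lvert S\rvert \geq k$. This gives the lower bound.

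For the harder direction, I would construct a directed network $G'$ by the standard vertex-splitting trick: replace each $v \in V(G)$ by an in-copy $v^-$ and out-copy $v^+$ joined by a directed arc of capacity $1$, and replace each undirected edge $\{u,v\}\in E(G)$ by the two directed arcs $u^+\!\to v^-$ and $v^+\!\to u^-$ of infinite capacity. Adjoin a super-source $s$ with infinite-capacity arcs to each $u^-$ for $u\in U$, and a super-sink $t$ receiving infinite-capacity arcs from each $v^+$ for $v\in V$ (vertices in $U\cap V$ receive both). Under this construction, any finite $s$-$t$ cut must consist entirely of vertex-splitting arcs, and the set of vertices $v$ whose splitting arc $v^-\!\to v^+$ is cut corresponds bijectively to a $U$-$V$ vertex separator of $G$; conversely every separator yields such a cut. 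Hence the minimum $s$-$t$ cut in $G'$ equals the minimum size of a $U$-$V$ vertex separator.

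On the flow side, by the max-flow min-cut theorem together with integrality of max flow in unit-capacity networks, a maximum $s$-$t$ flow of value $k$ decomposes into $k$ edge-disjoint unit-flow $s$-$t$ paths in $G'$. Each such path translates back into a $U$-$V$ walk in $G$ that uses each internal vertex at most once (because the splitting arcs have unit capacity and distinct paths are edge-disjoint in $G'$), starting in $U$ and ending in $V$. Combining the two halves of the bijection with max-flow min-cut yields the desired equality.

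The main obstacle will be the cleanup step that ensures the $k$ walks produced in $G'$ descend to $k$ paths in $G$ satisfying the strong conditions of the theorem: vertex-disjoint in the full sense, and each meeting $U$ and $V$ at exactly one vertex apiece. A walk could in principle revisit $U$ or $V$ internally; I would handle this by shortcutting, replacing a prefix ending at the last internal $U$-vertex (or a suffix starting at the first internal $V$-vertex) by a single step, which preserves the $s$-$t$ flow decomposition. Boundary vertices in $U\cap V$ are absorbed by the trivial flow route $s\to v^-\to v^+\to t$, realizing them as length-zero paths, and the unit capacity on the splitting arc $v^-\!\to v^+$ ensures they cannot simultaneously appear inside another path.
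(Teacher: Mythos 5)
The paper does not prove this statement; it is cited as a classical result, with the reference \cite{goring2002menger} pointing to G\"{o}ring's short, purely combinatorial proof of the vertex version of Menger's theorem (a direct minimality/induction argument with no flow machinery). Your proposal instead takes the other classical route: reduce to the max-flow min-cut theorem via vertex-splitting. The reduction you describe is correct. The easy inequality is handled properly, the vertex-split network with unit capacity on each arc $v^-\!\to v^+$ and infinite capacity elsewhere makes finite $s$-$t$ cuts correspond exactly to $U$--$V$ vertex separators, and integrality of max flow gives a decomposition into arc-disjoint $s$-$t$ paths; since the only outgoing arc from any $v^-$ is the unit-capacity splitting arc, these paths are automatically internally vertex-disjoint when projected to $G$. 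Your handling of $U\cap V$ via the length-zero route $s\to v^-\to v^+\to t$ is exactly the observation the paper makes in the remark following the theorem. The shortcutting cleanup you flag is the right fix and does go through: truncating a $G$-path to begin at its last $U$-vertex (respectively end at its first subsequent $V$-vertex) is realized in $G'$ by rerouting the flow through the infinite-capacity arc $s\to u_{i^*}^-$ (resp.\ $u_{j^*}^+\to t$), which preserves flow value and vertex-disjointness and terminates since paths strictly shorten. So the argument is sound; the trade-off versus the cited proof is that yours uses max-flow min-cut and flow integrality as black boxes, whereas G\"{o}ring's argument is shorter and self-contained, which is presumably why the paper cites it.
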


It should be noted that $U,V$ need not be disjoint in Theorem \ref{thm:menger}. In this case, any vertex in $U\cap V$ defines a path from $U$ to $V$ of length one. Such a point/path must therefore be contained in any vertex separator, and in any maximal collection of disjoint paths as in the theorem statement.

\vspace*{1cm}

\end{document}